\newtheorem{thm}{Theorem}[section]
\newtheorem{lemma}[thm]{Lemma}
\newtheorem{prp}[thm]{Proposition}
\theoremstyle{definition}
\newtheorem{definition}[thm]{Definition}
\theoremstyle{remark}
\newtheorem{remark}[thm]{Remark}
\numberwithin{equation}{section}
\newcommand{\R}{\mathbb R}
\newcommand{\RR}{\mathbb{R}}
\newcommand{\ol}{\overline}
\newcommand{\Om}{\Omega}
\newcommand{\rarrow}{\rightarrow}
\newcommand{\e}{\epsilon}
\begin{document}

\title[Double phase Robin problem]{Fractional double phase Robin problem involving variable order-exponents without Ambrosetti-Rabinowitz condition}
\author[R. Biswas]{Reshmi Biswas}
\author[S. Bahrouni]{Sabri Bahrouni}
\author[M. L. Carvalho]{Marcos L. Carvalho}
\address[R. Biswas]{Mathematics Department, Indian Institute of technology Guwahati, Guwahati, Assam 781039, India}
\address[S. Bahrouni]{Mathematics Department, Faculty of Sciences, University of Monastir, 5019 Monastir, Tunisia}
\address[M. L. Carvalho]{Mathematics Institute, Universidade Federal de Goi\'as, Brazil}
\email[R. Biswas]{b.reshmi@iitg.ac.in}
\email[S. Bahrouni]{sabri.bahrouni@fsm.rnu.tn}
\email[M. L. Carvalho]{marcos\_leandro\_carvalho@ufg.br}

\keywords {Variable-order fractional $p(\cdot)$-Laplacian, Double phase problem, Robin boundary condition, variational methods\\
\hspace*{.3cm} {\it 2010 Mathematics Subject Classifications}:
 35R11, 35S15, 47G20, 47J30.}

\begin{abstract}
We consider a fractional double phase Robin problem involving variable order and variable exponents. The
nonlinearity $f$ is a Carath\'{e}odory function satisfying some hypotheses which do not include
the Ambrosetti-Rabinowitz type condition. By using a Variational methods, we investigate
the multiplicity of solutions.
\end{abstract}

\maketitle
\tableofcontents

\section{Introduction}

In the last few decades, problems involving $p(x)$-Laplacian, defined as $(-\Delta)_{p(x)}u := (|\nabla u|^{p(x)-2}\nabla u),\ x \in \RR^N,$ were $p:\RR^N\to[1,\infty)$ is continuous function, have
been studied intensively due to its major real world appearances in several mathematical models, for e.g.,  electrorheological fluid flow, image restorations, etc. (see \cite{ace,chen, Ruzicka, Zhikov}). Various parametric boundary value problems with variable exponents can be found in the book of R\u{a}dulescu-Repov\v{s} \cite{Book-RR} and also one can refer to the book by Diening et al. \cite{diening} for the properties of such operator and associated variable exponent Lebesgue spaces and variable exponent Sobolev spaces.\\
 On the other hand, recently, great attention has been focused on the study of fractional and nonlocal operators of elliptic type, both for pure mathematical research and in view of concrete real-world applications. In most of these applications, a fundamental tool to treat these type of problems
is the so-called fractional order Sobolev spaces. The literature on nonlocal operators and on their applications is very interesting and, up to now, quite large. We also refer to the recent monographs \cite{14,22} for a thorough variational approach of  nonlocal problems.

A bridge between fractional order theories and Sobolev spaces with variable settings is first provided
in \cite{kaufmann}.  In that paper, the authors defined the Fractional Sobolev spaces with variable exponents and introduce the corresponding fractional $p(\cdot)$-Laplacian as
\begin{equation}\label{operator-5}
(-\Delta)_{p(\cdot)}^{s} u(x):=   P.V.\int_{\Om}\frac{\mid
	u(x)-u(y)\mid^{p(x,y)-2}(u(x)-u(y))}{\mid
	x-y\mid^{N+sp(x,y)}}dy, ~~x \in  \Om,
\end{equation} where P.V. denotes Cauchy's principal value, $p:\Om\times\Om\to\RR$ is a continuous function with $1<p(x,y)<\infty$ and $ 0<s<1$, where $\Omega$ is a smooth domain.
The idea of studying such spaces and the associated operator defined in \eqref{operator-5} arises from a natural inquisitiveness to see what results can be recovered when the standard local
$p(x)$-Laplace operator is replaced by the fractional $p(\cdot)$-Laplacian. Continuing with this thought and inspired by the vast applications of variable order derivative (see for e.g. \cite{5,3,1,2,6,4} and references there in), Biswas and Tiwari \cite{rs} introduced  the variable order fractional Sobolev spaces with variable exponent and corresponding variable-order fractional $p(\cdot)$-Laplacian by imposing variable growth on the fractional order $s$, given in \eqref{operator-5}, to study some elliptic problems. In fact, results regarding  fractional $p(\cdot)$-Laplace equations and variable-order fractional $p(\cdot)$-Laplace equations are in progress, for example, we refer to \cite{ab, BR, ky-ho, Ky-Sim} and  \cite{Sabri,res,zfb}, respectively.

In this paper, we are interested in the following problem:

\begin{equation}\label{eq1}
  \begin{cases}
    \mathcal{L}_{p_1,p_2}^s (u)+|u|^{\overline{p}_1(x)-2}u+|u|^{\overline{p}_2(x)-2}u= f(x,u) & \mbox{in}\ \Omega \\
    \mathcal{N}^s_{p_1,p_2}(u)+\beta(x)(|u|^{\overline{p}_1(x)-2}u+|u|^{\overline{p}_2(x)-2}u) =0 & \mbox{in}\ \R^N\setminus\overline{\Omega},
  \end{cases}
\end{equation}
where
$$
\mathcal{L}_{p_1,p_2}^s (u):=(-\Delta)^{s(.)}_{p_1(.)}(u)+(-\Delta)^{s(.)}_{p_2(.)}(u),
$$
$$
\mathcal{N}^{s}_{p_1,p_2}(u):=\mathcal{N}^{s(\cdot)}_{p_1(\cdot)}(u)+\mathcal{N}^{s(\cdot)}_{p_2(\cdot)}(u)
$$
and
\begin{align}\label{operator}
(-\Delta)^{s(.)}_{p_i(.)}u(x)=\displaystyle P.V. \int_{\Om}\frac{|u(x)-u(y)|^{p_i(x,y)}}{|x-y|^{N+s(x,y)p_i(x,y)}}\,d y,\; i=1,2, \quad \text{for }x\in{\Omega},
\end{align}
\begin{align}\label{Neumann boundary}
    \mathcal{N}^{s(\cdot)}_{p_i(\cdot)}u(x)=\int_{\Omega}\frac{|u(x)-u(y)|^{p_i(x,y)-2}(u(x)-u(y))}{|x-y|^{N+s(x,y)p_i(x,y)}}\,dy \quad \text{for }x\in\R^N\setminus\overline{\Omega}.
\end{align}
Here P.V. denotes the Cauchy's principal value, $\Omega\subset\R^N$ is a bounded smooth domain, $s,p_1,p_2$ are  continuous functions such that $\ol{p}_i(x):=p_i(x,x),~i=1,2,~\ol{s}(x):=s(x,x)$ with appropriate assumptions described later. The variable exponent $\beta$ verifies the assumption
\begin{equation}\label{beta}\tag{$\beta$}
  \beta \in L^{\infty}(\R^{N}\setminus \Omega)\quad \text{and}\quad \beta \geq 0\ \text{in}\ \R^{N}\setminus \Omega.
\end{equation}

 The operator, defined in \eqref{eq1}, is called double phase type operator which  has some important applications in biophysics, plasma physics, reaction-diffusion, etc. (see \cite{1.0, 1.1, 1.2}, for e.g.). For more details on applications of such operators in constant exponent set up, that is , $(p,q)$-Laplacian equations, we refer
	to the survey article \cite{21}, see also \cite{Bahrouni-Radulescu-Repovs, Radulescu} for the nonconstant case. This present paper generalizes some results contained in \cite{PRR} and \cite{SRRZ} to the case of nonlocal partial differential equations
with variable exponents.
If $p_1$ and $p_2$ are constants, then \eqref{eq1} becomes the usual nonlocal constant
exponent differential equation discussed in \cite{ambrosioR, Chen-Bao}.  Several results for $(p,q)$-Laplacian problems set in bounded domains and in the whole of $R^N$ can be found in \cite{BBR, Figueiredo, BisciR} and the references therein. But if either $p_1$ or $p_2$ is a non-constant
function, then \eqref{eq1} has a more complicated structure, due to its non-homogeneity and to the presence of several nonlinear terms, only few recent works deal with these problems. For instance, in {\cite{zfb1}, the authors generalize the double phase problem involving a local version of the fractional
	operator with variable exponents, discussed in \cite{tong}, and studied the problem involving variable order fractional $p(\cdot)\&q(\cdot)$-Laplacian but with homogeneous Dirichlet boundary datum, that is, $u=0$ in $\RR^N\setminus\Om.$

Now we consider some notations as follows.
For any set $\mathcal{D}$ and any function $\Phi:\mathcal{D}\rightarrow\mathbb R$, we fix
{\begin{align*}
	\Phi^{-}:=\inf_{\mathcal{D}} \Phi(x)\text{ ~~~and ~~} \Phi^{+}:=\sup_{ \mathcal{D}}\Phi(x).
	\end{align*}}
\noindent We  define the function space
$$C_+(\mathcal{D}):=\{\Phi:\mathcal{D}\to \R \text {~is uniformly continuous~} :~ 1 <\Phi^{-}\leq \Phi^{+}<\infty\}.$$
We consider the following hypotheses on the  variable order $s$ and on the variable exponents $p_1,p_2:$
\begin{itemize}
	\item[{$(H_1)$}] $s:\mathbb R^N\times\mathbb R^N\rightarrow(0,1)$ is a  uniformly continuous and symmetric function, i.e., $s(x,y)=s(y,x)$
	for all $(x,y)\in \RR^N\times \RR^N$
	with $0<s^-\leq  s^+<1$.
	\item[{$(H_2)$}] $p_i\in C_+(\mathbb R^N\times\mathbb R^N)$ are uniformly continuous and symmetric functions, i.e., $p_i(x,y)=p_i(y,x),i=1,2$ for all $(x,y)\in \RR^N\times \RR^N$ 	with  $1<p_1^-\leq p_1^+<p_2^-\leq p_2^+<+\infty$
	such that $s^+p_i^+<N$.
\end{itemize}

 First we study our  problem without assuming the well known Ambrosetti-Rabino\-witz (AR, in short) type condition on the nonlinearity $f$, which is given as
 $$(AR)\qquad\quad {\exists \theta>p_2^+} \text{ s.t.}\;\;tf(x,t)>\theta F(x,t),\;\;\forall |t|>0. $$ {\color{blue} } As known, under $(AR)$, any Palais-Smale sequence of the corresponding energy functional is bounded, which plays an important role of the application of variational methods. In our problem the nonlinearity $f:\Om\times\R\rarrow\R$ is a
 Carath\'eodory function such that $f(x,0)=0$ for a.e. $x\in\Om.$   The further assumptions on $f$ are given below.

\begin{itemize}
\item[{$(f_1)$}] There exists  $a\in L^{\infty}(\Om)$ such that $|f(x,t)|\leq a(x)\left(1+|t|^{r(x)-1}\right),$ for a.e. $x\in\Omega$ and for all $t\in\R,$ where $r\in C_+(\R^N)$ with $p_2^+<r^-\leq r(x)< \frac{N\ol{p}_2(x)}{N-\ol{s}(x)\ol{p}_2(x)}:={{p_2}_{s}}^*(x)$.

\item [{$(f_2)$}] If $F(x,t):=\int_{0}^{t}f(x,s)ds$, then  $\displaystyle\lim_{|t|\to +\infty}\frac{F(x,t)}{|t|^{p_2^+}}=0$ uniformly for $a.e$ $x\in\Om.$

\item [{$(f_3)$}] $\displaystyle\lim_{|t|\to 0}\frac{f(x,t)}{|t|^{p_2^+-2}t}=0$ uniformly for $a.e$ $x\in\Om$.
\item [{$(f_4)$}] Let $\mathcal{F}(x,t)=tf(x,t)-p_2^+ F(x,t).$ Then there exists $b\in L^1(\Om)$ such that
$$
\mathcal{F}(x,t)\leq\mathcal{F}(x,\tau)+b(x)\ \text{for a.e.}\ x\in\Om,\ \text{all}\ 0\leq t\leq \tau\ \text{or all}\ \tau\leq t\leq 0.
$$
\end{itemize}
Consider the following function
      $$	g(x,t)=t|t|^{{\frac{p_2^+}{2}}-2}\log(1+|t|). $$
 One can check that $g$ does not satisfy $(AR)$ but it satisfies $(f_1)$-$(f_4)$. Therefore by dropping $(AR)$ condition, not only we invite complications in the  compactness of Palais-Smale sequence but also we include larger class of nonlinearities. To overcome such aforementioned difficulty, we analyze the Cerami condition (see Definition \ref{cc}), which is more appropriate for the set up of our problem. Finally, we are in a position to state the main results of this article.
\begin{thm}\label{main.result.1}
Let hypotheses $(H_1)$-$(H_2)$, $(\beta)$ and $(f_1)$-$(f_4)$ hold. Then there exists a non-trivial weak solution of \eqref{eq1}.
\end{thm}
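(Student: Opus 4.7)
The plan is to obtain the solution as a non-trivial critical point of the natural energy functional associated with \eqref{eq1},
$$
J(u)=\sum_{i=1}^{2}\left[\Phi_{p_i,s}(u)+\int_{\Om}\frac{|u|^{\ol p_i(x)}}{\ol p_i(x)}\,dx+\int_{\R^N\setminus\Om}\frac{\beta(x)\,|u|^{\ol p_i(x)}}{\ol p_i(x)}\,dx\right]-\int_{\Om}F(x,u)\,dx,
$$
where $\Phi_{p_i,s}(u)$ is the Gagliardo-type modular attached to $(-\Delta)^{s(\cdot)}_{p_i(\cdot)}$ integrated over $Q:=(\R^N\times\R^N)\setminus((\R^N\setminus\Om)\times(\R^N\setminus\Om))$, and $W$ is the natural variable-order fractional Musielak--Sobolev space adapted to the Robin datum (built as in \cite{rs,zfb1} but with a $\beta$-weighted contribution outside $\Om$, so that the boundary condition in \eqref{eq1} arises from integration by parts). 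Under $(H_1)$--$(H_2)$, $(\beta)$, $(f_1)$, and the variable-order Sobolev embedding $W\hookrightarrow L^{r(\cdot)}(\Om)$ (available since $r(x)<{{p_2}_s}^*(x)$), $J\in C^1(W,\R)$ and its critical points are exactly the weak solutions of \eqref{eq1}. Since $(AR)$ is dropped, I would obtain a critical point via the mountain-pass theorem in its Cerami form.

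For the mountain-pass geometry, $(f_1)$ together with $(f_3)$ yields, for each $\ve>0$, a constant $C_\ve>0$ with $|F(x,t)|\le\ve|t|^{p_2^+}+C_\ve|t|^{r(x)}$ for a.e.\ $x\in\Om$ and all $t\in\R$. Using the standard modular-to-norm estimates (so that $\Phi_{p_i,s}(u)\ge c\,\|u\|^{p_1^-}$ when $\|u\|\le 1$) and the embeddings $W\hookrightarrow L^{p_2^+}(\Om)\cap L^{r(\cdot)}(\Om)$, one gets
$$
J(u)\ge c_1\|u\|^{p_1^-}-c_2\ve\|u\|^{p_2^+}-c_3\|u\|^{r^-}
$$
for $\|u\|$ small, which is $\ge\alpha>0$ on some sphere $\|u\|=\rho$ after choosing $\ve,\rho$ small. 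For the ``far-away'' geometry, $(f_2)$ gives, for every $M>0$, a $C_M>0$ with $F(x,t)\ge M|t|^{p_2^+}-C_M$; fixing $u_0\in W$ with $u_0\not\equiv 0$ on $\Om$ and testing at $tu_0$ for $t\ge 1$, the modular part grows at most like $Ct^{p_2^+}$ while the $F$-term is bounded below by $Mt^{p_2^+}\int_{\Om}|u_0|^{p_2^+}\,dx-C'$; choosing $M$ large enough yields $J(tu_0)\to-\infty$ as $t\to\infty$, providing an admissible $e$ with $J(e)<0$.

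The main obstacle is verifying the Cerami condition $(C)_c$. Let $(u_n)\subset W$ satisfy $J(u_n)\to c$ and $(1+\|u_n\|)\|J'(u_n)\|_{W^*}\to 0$. The crucial step is the boundedness of $(u_n)$. Arguing by contradiction, set $t_n:=\|u_n\|\to\infty$ and $v_n:=u_n/t_n$; up to a subsequence, $v_n\cd v$ in $W$, $v_n\to v$ in $L^{p_2^+}(\Om)\cap L^{r(\cdot)}(\Om)$, and $v_n(x)\to v(x)$ a.e. If $\{v\neq 0\}$ has positive measure, then $|u_n(x)|\to\infty$ there and $(f_2)$ forces $F(x,u_n)/t_n^{p_2^+}=|v_n|^{p_2^+}\cdot F(x,u_n)/|u_n|^{p_2^+}\to+\infty$ a.e.\ on $\{v\neq 0\}$, contradicting via Fatou the bound $J(u_n)/t_n^{p_2^+}=O(1)$ coming from the modular part. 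If $v\equiv 0$, I adapt a Jeanjean-type device (as in \cite{PRR,SRRZ}): choose $\tau_n\in[0,1]$ with $J(\tau_n u_n)=\max_{\tau\in[0,1]}J(\tau u_n)$. Since $Rv_n\to 0$ in $L^{r(\cdot)}(\Om)$ implies $\int_{\Om}F(x,Rv_n)\,dx\to 0$, while $\|Rv_n\|=R$ keeps the modular at $Rv_n$ at least $c\,R^{p_1^-}$ for $R\ge 1$, one has $J(\tau_n u_n)\ge J(Rv_n)\to\infty$ as $R\to\infty$. On the other hand, at the interior maximum $\langle J'(\tau_n u_n),\tau_n u_n\rangle=0$; combined with the Cerami-type identity
$$
p_2^+ J(u)-\langle J'(u),u\rangle=(\text{non-negative modular remainder})+\int_{\Om}\mathcal{F}(x,u)\,dx
$$
and the almost-monotonicity $\mathcal{F}(x,\tau_n u_n)\le\mathcal{F}(x,u_n)+b(x)$ supplied by $(f_4)$, one deduces $J(\tau_n u_n)=O(1)$, contradicting $J(\tau_n u_n)\to+\infty$. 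Hence $(u_n)$ is bounded. The $(S_+)$-type property of the variable-order double-phase operator (cf.\ \cite{zfb1,res}) together with the compactness of $W\hookrightarrow L^{r(\cdot)}(\Om)$ then promotes the weak limit $u_n\cd u$ to strong convergence $u_n\to u$ in $W$, completing $(C)_c$.

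Finally, the mountain-pass theorem in Cerami form yields a critical value $c\ge\alpha>J(0)=0$, and therefore a critical point $u\in W$ of $J$ with $J(u)=c>0$; in particular $u\not\equiv 0$, giving the desired non-trivial weak solution of \eqref{eq1}.
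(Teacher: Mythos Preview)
Your proposal is correct and follows essentially the same route as the paper: the mountain-pass theorem in Cerami form, with the local geometry from the estimate $|F(x,t)|\le\ve|t|^{p_2^+}+C_\ve|t|^{r(x)}$, the far-away geometry from $(f_2)$, and the Cerami condition verified via the Jeanjean trick (maximizing $J(\tau u_n)$ over $\tau\in[0,1]$) combined with the quasi-monotonicity $(f_4)$ and the $(S_+)$-property of the operator. One small slip: for $\|u\|\le 1$ the modular--norm relation reads $\rho(u)\ge\|u\|^{p_2^+}$, not $\|u\|^{p_1^-}$ (the latter is the \emph{upper} bound in that regime), so the paper's Lemma~\ref{geo}(i) compares $\|u\|^{p_2^+}$ against $\|u\|^{r^-}$; your displayed lower bound $c_1\|u\|^{p_1^-}$ is not actually valid, though the corrected version still yields the desired strict local minimum since $r^->p_2^+$.
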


 Next, for the odd nonlinearity $f(x,t),$ we state the existence results  of infinitely many solutions using the Fountain theorem and the Dual fountain theorem, respectively.

\begin{thm}\label{fount-sol}
	Let hypotheses $(H_1)$-$(H_2)$, $(\beta)$ and $(f_1)$-$(f_4)$ hold. Also let $f(x,-t)=-f(x,t).$
	Then the problem \eqref{eq1} has a sequence of nontrivial weak solutions with unbounded energy.
\end{thm}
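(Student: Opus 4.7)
The strategy is to apply the Fountain Theorem (in Bartsch's formulation, with the Cerami variant) to the energy functional
\[
J(u)=\sum_{i=1,2}\int_{Q}\frac{|u(x)-u(y)|^{p_i(x,y)}}{p_i(x,y)|x-y|^{N+s(x,y)p_i(x,y)}}\,dx\,dy
+\sum_{i=1,2}\int_{\Omega}\frac{|u|^{\ol p_i(x)}}{\ol p_i(x)}\,dx
+\sum_{i=1,2}\int_{\R^N\setminus\Omega}\frac{\beta(x)|u|^{\ol p_i(x)}}{\ol p_i(x)}\,dx
-\int_{\Omega}F(x,u)\,dx
\]
on the natural variable-order fractional Robin Sobolev space $W$ associated with the problem (here $Q=(\R^N\times\R^N)\setminus((\R^N\setminus\Omega)\times(\R^N\setminus\Omega))$). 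Since $f(x,\cdot)$ is odd, $F(x,\cdot)$ is even, so $J$ is even, and $J\in C^1(W,\R)$ by the growth assumption $(f_1)$. The Cerami condition for $J$ has already been verified in the course of proving Theorem \ref{main.result.1}, so it can be invoked here without modification.

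Because $W$ is a separable and reflexive Banach space, I would fix a Schauder-type basis $\{e_j\}_{j\ge 1}$ and set $X_j:=\R e_j$, $Y_k:=\bigoplus_{j=1}^{k}X_j$, $Z_k:=\overline{\bigoplus_{j\ge k}X_j}$. Then I need to produce sequences $0<r_k<\rho_k$ such that
\[
a_k:=\max_{u\in Y_k,\ \|u\|=\rho_k}J(u)\le 0,\qquad
b_k:=\inf_{u\in Z_k,\ \|u\|=r_k}J(u)\to+\infty.
\]

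For $(A_1)$ (the coercivity on $Y_k$), I would exploit the superlinearity encoded in $(f_2)$ (combined with $(f_1)$): for every $M>0$ there is $C_M\ge 0$ with $F(x,t)\ge M|t|^{p_2^+}-C_M$ for a.e. $x\in\Omega$ and all $t\in\R$. Since $Y_k$ is finite-dimensional all norms are equivalent on it; bounding the four modular integrals from above by $\|u\|^{p_2^+}+\|u\|^{p_1^-}$ (via the standard modular--norm inequalities, e.g. as in \cite{rs,Sabri}) and the nonlinear term from below using the inequality above with $M$ large, one obtains $J(u)\to -\infty$ as $\|u\|\to\infty$ in $Y_k$, yielding $\rho_k$. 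For $(A_2)$, the key technical ingredient is the asymptotic behavior
\[
\beta_k:=\sup_{u\in Z_k,\ \|u\|=1}|u|_{L^{r(\cdot)}(\Omega)}\longrightarrow 0,
\]
which follows from the compactness of the embedding $W\hookrightarrow L^{r(\cdot)}(\Omega)$ (guaranteed by $(H_1),(H_2)$ and $(f_1)$, since $r(x)<{p_2}_{s}^{*}(x)$) together with the standard weak-convergence argument on $Z_k$. Combining the modular lower bounds with $|F(x,t)|\le C(1+|t|^{r(x)})$ gives, for $\|u\|=r_k$ large,
\[
J(u)\ \ge\ \tfrac{1}{p_2^+}\|u\|^{p_1^-}-C\bigl(\beta_k^{r^-}\|u\|^{r^-}+\beta_k^{r^+}\|u\|^{r^+}\bigr)-C',
\]
and the choice $r_k=(C r^+\beta_k^{r^+})^{1/(p_1^- - r^+)}$ (up to adjustment for the $r^-$ term) forces $b_k\to\infty$ because $r^->p_2^+>p_1^-$.

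I expect two steps to require the most care. The first is the modular analysis on the infinite-dimensional tail spaces $Z_k$: since the energy is governed by a sum of two variable-exponent moduli plus a boundary Robin modular, the inequalities relating $\|u\|$ to each modular involve case splits depending on whether $\|u\|\lessgtr 1$; one must keep track uniformly in $k$ so that the dominant exponent on the modular side is $p_1^-$ (not $p_2^+$) when $r_k\to\infty$, ensuring $J\to\infty$. The second is verifying $\beta_k\to 0$ in the variable-exponent Lebesgue setting; the argument reduces to showing that every sequence $u_k\in Z_k$ with $\|u_k\|=1$ satisfies $u_k\rightharpoonup 0$ and hence $u_k\to 0$ in $L^{r(\cdot)}(\Omega)$ by compactness, which forces $\beta_k\to 0$ monotonically. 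Once these pieces are in place, the Fountain Theorem produces a sequence $\{u_n\}\subset W$ of critical points of $J$ with $J(u_n)\to+\infty$, which are precisely weak solutions of \eqref{eq1} with unbounded energy.
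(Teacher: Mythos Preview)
Your proposal is correct and follows essentially the same route as the paper: apply the Fountain theorem (in its Cerami variant) to the even functional $\mathcal{I}$, use the compact embedding into $L^{r(\cdot)}(\Omega)$ to show that the sup-norms $\beta_k$ over the tail spaces tend to zero, combine this with the modular lower bound $\rho(u)\ge \|u\|^{p_1^-}$ for $\|u\|>1$ to obtain $b_k\to+\infty$, and use the superlinearity $F(x,t)\ge M|t|^{p_2^+}-C_M$ on the finite-dimensional spaces to force $a_k\le 0$. The only cosmetic differences are that the paper swaps the roles of the symbols $X_k$ and $Y_k$ (so its $Y_k$ is your $Z_k$), and it handles the finite-dimensional part by a contradiction/Fatou argument rather than directly invoking equivalence of norms, but the substance is the same.
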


\begin{thm}\label{dual-fount-sol}
	Let hypotheses $(H_1)$-$(H_2)$, $(\beta)$ and $(f_1)$-$(f_4)$ hold. Also let $f(x,-t)=-f(x,t)$.
	Then the problem \eqref{eq1} has a sequence of nontrivial weak solutions with negative critical values converging to zero.
\end{thm}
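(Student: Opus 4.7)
The plan is to apply the Dual Fountain Theorem of Bartsch--Willem to the even $C^{1}$ energy functional $J\colon X\to\R$ naturally associated with \eqref{eq1}, where $X$ is the fractional double-phase Sobolev space with variable order and exponents in which weak solutions of \eqref{eq1} live. Since $f(x,\cdot)$ is odd, $F(x,\cdot)$ is even and hence $J$ is even with $J(0)=0$. Because $X$ is separable and reflexive, fix a Schauder basis $\{e_{j}\}_{j\in\N}$ and set $X_{j}:=\R e_{j}$, $Y_{k}:=\bigoplus_{j\le k}X_{j}$, $Z_{k}:=\overline{\bigoplus_{j\ge k}X_{j}}$, so that $X=\overline{\bigoplus_{j\in\N}X_{j}}$ with $\dim X_{j}<\infty$.

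The central analytic ingredient is the standard asymptotic
\[
\beta_{k}\;:=\;\sup_{u\in Z_{k},\,\|u\|=1}\|u\|_{L^{r(\cdot)}(\Omega)}\;\longrightarrow\;0\quad\text{as }k\to\infty,
\]
which follows from the compact embedding $X\hookrightarrow L^{r(\cdot)}(\Omega)$ guaranteed by $(H_{1})$, $(H_{2})$ and the subcriticality in $(f_{1})$, via a Riesz-type selection argument now standard in the variable-exponent fractional Sobolev literature. For the Cerami $(C)^{*}_{c}$-condition at negative levels I would adapt, by a diagonal argument along the subspaces $Y_{n_{k}}$, the Cerami analysis of Theorem \ref{main.result.1}: hypothesis $(f_{4})$ gives boundedness of pseudo-Cerami sequences, while the $(S_{+})$-type monotonicity of $\mathcal{L}^{s}_{p_{1},p_{2}}$ combined with the compact embedding into $L^{r(\cdot)}(\Omega)$ upgrades weak to strong convergence of a subsequence.

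For the geometric conditions, on $Z_{k}$ the modular-norm inequalities combined with $(f_{1})$ and $(f_{3})$ yield a lower bound of the schematic form
\[
J(u)\;\ge\;c_{1}\|u\|^{p_{2}^{+}}-c_{2}\beta_{k}^{r^{-}}\|u\|^{r^{-}}-c_{3},
\]
and a choice of $\rho_{k}$ calibrated on this estimate produces simultaneously $a_{k}:=\inf_{\|u\|=\rho_{k},\,u\in Z_{k}}J(u)\ge 0$ (condition $(B_{1})$) and $d_{k}:=\inf_{\|u\|\le\rho_{k},\,u\in Z_{k}}J(u)\to 0$ as $k\to\infty$ (condition $(B_{3})$). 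For $(B_{2})$, all norms on the finite-dimensional $Y_{k}$ are equivalent, so I would select a test direction in $Y_{k}$ along which $\int_{\Omega}F(x,u)\,dx$ dominates the kinetic part at a fixed small scale $r_{k}\in(0,\rho_{k})$, yielding $b_{k}:=\max_{\|u\|=r_{k},\,u\in Y_{k}}J(u)<0$. The Dual Fountain Theorem then delivers a sequence of critical points $(u_{k})$ of $J$ with $J(u_{k})<0$ and $J(u_{k})\to 0$, giving the infinitely many nontrivial weak solutions of \eqref{eq1} with negative energy converging to zero.

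\textbf{Main obstacle.} The most delicate step is $(B_{2})$: under $(f_{3})$ one has $F(x,t)=o(|t|^{p_{2}^{+}})$ near the origin, matching the order $\|u\|^{p_{2}^{+}}$ of the kinetic part of $J$ on small balls, so the negative contribution $-\int_{\Omega}F\,dx$ barely outweighs the positive kinetic one. One must therefore carefully exploit the finite dimensionality of $Y_{k}$ together with a genuine positivity region of $F$ to extract a strictly negative level on the small sphere. A secondary difficulty is the adaptation of the Cerami analysis to the $(C)^{*}_{c}$-formulation along the approximating finite-dimensional subspaces $Y_{n_{k}}$, which requires care in transferring the Palais--Smale-type compactness from the full space to the approximating scheme.
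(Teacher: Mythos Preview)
Your overall strategy --- the Dual Fountain Theorem applied to the even $C^{1}$ functional $\mathcal{I}$, with the standard finite/tail decomposition of $X$ and the vanishing $\alpha_{k}\to 0$ of Lebesgue-norm suprema on the tail spaces --- is exactly the paper's. Your treatments of $(B_{1})$, $(B_{3})$ and the $(C)^{*}_{c}$-condition also match the paper's (which simply cites \cite{miyagaki} for $(C)^{*}_{c}$ and reuses the $(B_{1})$ lower bound to obtain $(B_{3})$); the stray constant $-c_{3}$ in your schematic $(B_{1})$ estimate should not be there and is easily removed using $(f_{1})$, $(f_{3})$ exactly as in Lemma~\ref{geo}(i).

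The gap in your proposal is $(B_{2})$, and your diagnosis of the difficulty is in fact sharper than your proposed cure. Your plan --- exploit finite-dimensionality of $Y_{k}$ together with a positivity region of $F$ to force $\mathcal{I}<0$ on a \emph{small} sphere --- cannot succeed under $(f_{3})$: the computation of Lemma~\ref{geo}(i) shows $\mathcal{I}(u)>0$ for \emph{every} $u$ with $0<\|u\|\le\alpha$, independently of subspace, so no test direction in $Y_{k}$ helps. The paper argues $(B_{2})$ completely differently, by contradiction and through $(f_{2})$, the behaviour of $F$ at \emph{infinity}: it assumes $(B_{2})$ fails on the finite-dimensional slice $X_{k}$, extracts a sequence $(v_{n})\subset X_{k}$ with $\|v_{n}\|\to+\infty$ and $\mathcal{I}(v_{n})\ge 0$, and then equivalence of norms on $X_{k}$ plus Fatou's lemma force $\mathcal{I}(v_{n})\to-\infty$ --- verbatim the argument used for $(\mathcal{A}_{2})$ in the Fountain-theorem proof. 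So the idea you are missing is to trade the small-$|t|$ hypothesis $(f_{3})$ for the large-$|t|$ hypothesis $(f_{2})$ when verifying $(B_{2})$. That said, your obstacle is a genuine one: since $\varrho_{k}\to 0$ in $(B_{1})$ while Lemma~\ref{geo}(i) gives $\mathcal{I}>0$ on a fixed small punctured ball, arranging $0<\delta_{k}<\varrho_{k}$ for all large $k$ is a point the paper asserts but does not elaborate.
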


We prove the next theorem using the symmetric mountain pass theorem.

\begin{thm}\label{sym-infinite-sol}
	Let hypotheses $(H_1)$-$(H_2)$, $(\beta)$ and $(f_1)$-$(f_4)$ hold. Also let $f(x,-t)=-f(x,t)$.
	Then the problem \eqref{eq1} has a  sequence of nontrivial weak solutions with unbounded energy characterized by a minmax argument.
\end{thm}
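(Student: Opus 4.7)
The plan is to apply the Rabinowitz-type symmetric (i.e.\ $\mathbb{Z}_2$-equivariant) mountain pass theorem to the energy functional $J \in C^1(X, \R)$ associated with problem \eqref{eq1}, where $X$ is the variable-order, variable-exponent fractional Sobolev space built in the paper's functional-analytic setup. Since $f(x,-t) = -f(x,t)$ implies $F(x,\cdot)$ is even, the functional $J$ is even with $J(0) = 0$, and weak solutions of \eqref{eq1} correspond exactly to critical points of $J$. The symmetric mountain pass theorem will then produce an unbounded sequence of critical values characterized by a minimax formula over odd continuous maps from finite-dimensional spheres into $X$, which is precisely the desired sequence of weak solutions with unbounded energy.

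To apply the theorem, three hypotheses must be verified: (i) Cerami compactness of $J$ at every level $c \in \R$; (ii) a small-sphere positivity $\exists\, \rho, \alpha > 0$ with $J(u) \geq \alpha$ on $\{\|u\|_X = \rho\}$; and (iii) the asymptotic negativity condition on every finite-dimensional subspace $Y \subset X$, namely the existence of $R_Y > 0$ such that $J(u) \leq 0$ for $u \in Y$ with $\|u\|_X \geq R_Y$. Items (i) and (ii) were already established during the proof of Theorem \ref{main.result.1}: (i) uses $(f_4)$ via a Jeanjean-type trick to bound Cerami sequences, together with the subcritical growth $(f_1)$ and the compact embedding $X \hookrightarrow\hookrightarrow L^{r(\cdot)}(\Om)$ to pass to strong limits, while (ii) follows from the pointwise estimate $|F(x,t)| \leq \varepsilon |t|^{p_2^+} + C_\varepsilon |t|^{r(x)}$ (consequence of $(f_1)$ and $(f_3)$) combined with the modular-norm inequalities in the variable-exponent setting and the Sobolev-type embedding into $L^{r(\cdot)}(\Om)$. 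Both arguments transfer verbatim to the present odd-$f$ setting.

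Item (iii) is the delicate step. On the finite-dimensional subspace $Y$ all norms are equivalent, so it suffices to show that $J(tw) \to -\infty$ as $t \to +\infty$, uniformly for $w$ on the unit sphere $S_Y$ of $Y$. The key is to couple the nonquadraticity control $(f_4)$ with $(f_2)$: $(f_4)$ yields, via the standard argument applied to $\mathcal{F}(x,t) = tf(x,t) - p_2^+ F(x,t)$, an almost-monotonicity along rays of the map $t \mapsto F(x,tw(x))/t^{p_2^+}$, which, together with $(f_2)$ and a finite-dimensional compactness argument on $S_Y$, produces a uniform lower bound on $\int_\Om F(x, tw)\,dx$ that eventually dominates the positive part of $J(tw)$ (whose modular growth on $Y$ is at most of order $t^{p_2^+}$). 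This gives the required $R_Y$.

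Once (i)--(iii) are in place, the symmetric mountain pass theorem yields an unbounded sequence $\{c_n\} \subset \R$ of critical values of $J$ characterized by the indicated minimax formulas; the associated critical points are the desired nontrivial weak solutions of \eqref{eq1} with $|J(u_n)| \to \infty$. The main obstacle is precisely step (iii): in the absence of the Ambrosetti-Rabinowitz condition, the verification of the finite-dimensional asymptotic negativity is not immediate, and forces the careful combined exploitation of $(f_2)$ and $(f_4)$ described above rather than the elementary one-line argument that AR would provide.
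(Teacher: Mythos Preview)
Your overall strategy coincides with the paper's: apply the $\mathbb{Z}_2$-symmetric mountain pass theorem (stated in the paper as Theorem~\ref{sm}) to the even $C^1$ functional $\mathcal I$, with the Cerami condition and the small-sphere lower bound inherited verbatim from Proposition~\ref{bounded} and Lemma~\ref{geo}$(i)$, respectively.

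The one point of divergence is your treatment of condition~(iii), the finite-dimensional asymptotic negativity. You regard this as the delicate step and propose exploiting $(f_4)$ through an almost-monotonicity argument for $t\mapsto F(x,tw)/t^{p_2^+}$ on the unit sphere of $Y$. The paper's route is considerably shorter and does \emph{not} use $(f_4)$ here: from $(f_1)$--$(f_2)$ it has already extracted the global bound \eqref{g2}, namely $F(x,t)\geq M|t|^{p_2^+}-C_M$ for every $M>0$, and then on a finite-dimensional subspace $Y$ the equivalence of $\|\cdot\|$ with $\|\cdot\|_{L^{p_2^+}(\Omega)}$ gives, for $\|u\|=1$ and $t>1$,
\[
\mathcal I(tu)\leq t^{p_2^+}\bigl[1-M\,C(Y)^{p_2^+}\bigr]+|\Omega|\,C_M,
\]
which tends to $-\infty$ once $M>C(Y)^{-p_2^+}$. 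Thus $(f_4)$ enters only in the Cerami verification, not in the geometry at infinity. Your approach would also succeed, but the paper's argument shows that the superlinearity in $(f_2)$ alone (via \eqref{g2}) already suffices for $(\mathcal D_2)$, making the extra machinery unnecessary.
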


In the next theorem, we consider the following concave and convex type nonlinearity $f:$
\begin{itemize}
\item[$(f_5)$] For  $\lambda>0$ and $q,r\in C_+(\Om)$ with $1<q^-\leq q^+<p_1^-$ and $p_2^+<r^-$  $$f(x,t)=\lambda |t|^{q(x)-2}t+|t|^{r(x)-2}t$$
\end{itemize}

\begin{thm}\label{clk-infinite-sol}
Let hypotheses $(H_1)$-$(H_2)$, $(\beta)$ and $(f_5)$ hold.
Then for all $\lambda>0,$ the problem \eqref{eq1} has a  sequence of nontrivial weak solutions converging to $0$ with negative energy.
\end{thm}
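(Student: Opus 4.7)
The plan is to recast Theorem \ref{clk-infinite-sol} as a consequence of Clark's theorem (or, equivalently, the genus-based symmetric minimization theorem of Kajikiya), applied to a suitable truncation of the energy functional associated with \eqref{eq1}. Define
$$
J(u):=\Phi(u)-\lambda\int_\Omega\frac{|u|^{q(x)}}{q(x)}\,dx-\int_\Omega\frac{|u|^{r(x)}}{r(x)}\,dx,
$$
where $\Phi(u)$ collects the fractional double phase modulars on $\Omega$ together with the $\overline{p}_i(x)$ integral terms and the boundary contribution weighted by $\beta$, so that critical points of $J$ on the natural variable-order double phase space $W$ introduced earlier in the paper are exactly the weak solutions of \eqref{eq1}. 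Under $(H_1)$-$(H_2)$ and $(\beta)$, $J$ is of class $C^1$, and under $(f_5)$ we have $J(0)=0$ and $J(-u)=J(u)$.

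Because $r^->p_2^+$, the functional $J$ is unbounded from below, so one cannot apply Clark's theorem directly. I would therefore introduce a cut-off: fix $\rho>0$ small, choose a smooth even function $\chi:[0,\infty)\to[0,1]$ with $\chi\equiv1$ on $[0,\rho]$ and $\chi\equiv0$ on $[2\rho,\infty)$, and define
$$
\widetilde J(u):=\Phi(u)-\lambda\int_\Omega\frac{|u|^{q(x)}}{q(x)}\,dx-\chi(\|u\|_W)\int_\Omega\frac{|u|^{r(x)}}{r(x)}\,dx.
$$
Using the modular/norm inequalities for the variable-order fractional spaces and the embedding $W\hookrightarrow L^{r(\cdot)}(\Omega)$ given by $(f_1)$-type embeddings recalled earlier, one obtains, for $\|u\|_W$ small,
$$
\widetilde J(u)\ge c_1\|u\|_W^{p_1^-}-c_2\lambda\|u\|_W^{q^-}-c_3\|u\|_W^{r^-},
$$
and for $\|u\|_W$ large the convex-power term is switched off, leaving the coercive double phase part dominating the concave term since $q^+<p_1^-$. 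Hence $\widetilde J$ is bounded below on $W$, coercive in an appropriate modular sense, and satisfies the (PS) condition (actually Cerami, by the same argument used for the theorems above, since the compact embedding into $L^{q(\cdot)}(\Omega)$ takes care of the subcritical concave term).

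For the geometric condition needed by Clark/Kajikiya, I would fix an arbitrary $k\in\mathbb N$ and choose a $k$-dimensional subspace $V_k\subset W$ consisting of bounded functions supported in $\Omega$. On $V_k$ all norms are equivalent, and because $q^+<p_1^-\le p_2^+<r^-$, for sufficiently small $t>0$ and $u\in V_k$ with $\|u\|_W=1$,
$$
J(tu)\le C_1 t^{p_1^-}-C_2\lambda t^{q^+}+C_3 t^{r^-}<0,
$$
by letting the concave term dominate. Thus there exists $\delta_k\in(0,\rho)$ with $\sup_{u\in V_k\cap\partial B_{\delta_k}}\widetilde J(u)=\sup_{u\in V_k\cap\partial B_{\delta_k}}J(u)<0$, so the Krasnoselskii genus of $\{\widetilde J\le -\varepsilon_k\}$ is at least $k$ for some $\varepsilon_k>0$.

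An application of Clark's theorem (or Kajikiya's symmetric version) then yields a sequence $\{u_n\}\subset W$ of critical points of $\widetilde J$ with $\widetilde J(u_n)<0$ and $u_n\to 0$ in $W$. Since eventually $\|u_n\|_W\le\rho$, the cut-off is inactive and the $u_n$ are genuine critical points of $J$ with $J(u_n)<0$ and $J(u_n)\to 0$, which are the desired weak solutions of \eqref{eq1}. The main obstacle, as I see it, is the first estimate on $\widetilde J$: because the double phase modular, the $\overline{p}_i(x)$-terms and the boundary piece are all inhomogeneous with variable exponents, one must carefully combine the modular--norm inequalities with the embeddings into $L^{q(\cdot)}$ and $L^{r(\cdot)}$ (distinguishing cases $\|u\|_W\lessgtr 1$) to justify the coercivity and the geometric bound uniformly, and to ensure that the threshold $\rho$ can be chosen independently of $k$ in the genus argument.
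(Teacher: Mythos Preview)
Your approach is correct and essentially the same as the paper's: both truncate the unbounded-from-below energy via a smooth cut-off depending on the norm, verify boundedness from below and the Palais--Smale condition for the truncated functional, and then invoke a variant of Clark's theorem (the paper uses the Liu--Wang version, Theorem~\ref{clkk}) after checking the negative-energy condition on arbitrary finite-dimensional subspaces. The only slips are cosmetic---your lower modular bound for small norm should carry exponent $p_2^{+}$ rather than $p_1^{-}$ (cf.\ Lemma~\ref{norm-modular}(ii)), and the $r$-term in your upper bound on $J(tu)$ has the wrong sign---but neither affects the argument, which matches the paper's proof in structure and detail.
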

It is noteworthy to mention that we are the first ( as per the best of our knowledge) to study the above existence results for the problem \eqref{eq1} driven by double phase variable-order fractional $p_1(\cdot)\&p_2(\cdot)$-Laplacian involving Robin boundary condition and non-AR type nonlinearities.
\begin{remark}
Throughout this paper $C$ represents generic positive constant which may vary from line to line.
\end{remark}

\section{Preliminaries results}\label{preli}

\subsection{Variable exponent Lebesgue spaces}

In this section first we recall some basic properties of the  variable exponent Lebesgue spaces, which we will use  to prove our main results.

For $q\in C_+(\Om)$ define the variable exponent Lebesgue space $L^{q(\cdot)}(\Om)$ as
$$
L^{q(\cdot)}(\Om) :={ \Big \{ u : \Om\to\mathbb{R}\  \text{is~measurable}: \int_{\Om} |u(x)|^{q(x)} \;dx<+\infty \Big \}}
$$
which is a separable, reflexive, uniformly convex Banach space {(see \cite{diening,fan})} with respect to the Luxemburg norm
$$
\|{u}\|_{ L^{q(\cdot)}(\Om)}:=\inf\Big\{\eta >0:\int_{\Om}\Big|\frac{u(x)}{\eta}\Big|^{q(x)}\;dx\le1\Big\}.
$$
Define the modular $\rho_{\Om}^{q}:\ L^{q(\cdot)}(\Om)\to\mathbb{R}$ as
$$
\rho_{\Om}^q(u):=\int_{\Om }|u|^{q(x)}\; dx, \  { for~all~} u\in L^{q(\cdot)}(\Om).
$$

\begin{prp} \label{norm-mod}
		{\rm (\cite{fan})}Let $u_n,u\in L^{q(\cdot)}(\Om)\setminus\{0\},$ then the following properties hold:
		\begin{itemize}
			\item[\rm{(i)}]   $\eta=\|u\|_{ L^{q(\cdot)}(\Om)}$ if and only if  $\rho_{\Om}^q(\frac{u}{\eta})=1.$
			\item[\rm{(ii)}] $\rho_{\Om}^q(u)>1$ $(=1;\ <1)$ if and only if  $\|u\|_{ L^{q(\cdot)}(\Om)}>1$ $(=1;\ <1)$,
			respectively.
			\item[\rm{(iii)}] If $\|u\|_{ L^{q(\cdot)}(\Om)}>1$, then $
			\|u\|_{ L^{q(\cdot)}(\Om)}^{p^-}\le \rho_{\Om}^q(u)\le
			\|u\|_{L^{q(\cdot)}(\Om)}^{p^+}$.
			\item[\rm{(iv)}] If $\|u\|_{ L^{q(\cdot)}(\Om)}<1$, then $
			\|u\|_{L^{q(\cdot)}(\Om)}^{p^+}\le \rho_{\Om}^q(u)\le
			\|u\|_{L^{q(\cdot)}(\Om)}^{p^-}$.
			\item[\rm(v)] ${\displaystyle \lim_{n\to  +\infty} }\| u_{n} - u \|_{ L^{q(\cdot)}(\Om)} =0\iff{\displaystyle \lim_{n\to  +\infty}} \rho_{\Om}^q(u_{n} -u)=0.$
		\end{itemize}
\end{prp}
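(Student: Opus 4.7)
The plan is to deduce (i)--(v) in order from two elementary facts about the Lebesgue modular $\rho_\Omega^q$: its pointwise-in-$x$ monotonicity in the scaling parameter, and the comparison between $\eta^{q(x)}$ and the two constants $\eta^{q^-}$, $\eta^{q^+}$ depending on whether $\eta\lessgtr 1$. First I would fix $u\in L^{q(\cdot)}(\Omega)\setminus\{0\}$ and study the function $\psi(\eta):=\rho_\Omega^q(u/\eta)=\int_\Omega \eta^{-q(x)}|u(x)|^{q(x)}\,dx$. Because $q(x)>1$ and $\eta\mapsto \eta^{-q(x)}$ is continuous and strictly decreasing, dominated and monotone convergence make $\psi$ continuous and strictly decreasing on $(0,\infty)$, with $\psi(\eta)\to 0$ as $\eta\to\infty$ and $\psi(\eta)\to+\infty$ as $\eta\to 0^+$ (for the latter one restricts the integral to a set of positive measure where $|u|\ge\delta>0$, which exists since $u\not\equiv 0$). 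Thus there is a unique $\eta_0>0$ with $\psi(\eta_0)=1$, and the infimum defining the Luxemburg norm is attained precisely at this $\eta_0$, yielding (i). Statement (ii) then drops out by specializing (i) to $\eta=1$ and invoking the strict monotonicity of $\psi$.

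For (iii)--(iv), the argument is a scaling trick with $\eta:=\|u\|_{L^{q(\cdot)}(\Omega)}$, so that (i) gives $\psi(\eta)=1$. Writing
$$
|u(x)/\eta|^{q(x)} = \eta^{-q(x)}|u(x)|^{q(x)},
$$
and observing that for $\eta>1$ the inequality $\eta^{-q^+}\le \eta^{-q(x)}\le \eta^{-q^-}$ holds pointwise, integration over $\Omega$ gives
$$
\eta^{-q^+}\rho_\Omega^q(u)\;\le\;\psi(\eta)=1\;\le\;\eta^{-q^-}\rho_\Omega^q(u),
$$
which rearranges to $\|u\|^{q^-}\le \rho_\Omega^q(u)\le \|u\|^{q^+}$ and establishes (iii). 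The case $\eta<1$ reverses the power comparison and yields (iv) by the same calculation. (The statement as printed writes $p^-,p^+$; I read these as $q^-,q^+$ associated with the space $L^{q(\cdot)}(\Omega)$.)

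Finally, (v) follows from (iii)--(iv) combined with (ii). If $\|u_n-u\|_{L^{q(\cdot)}(\Omega)}\to 0$, then for large $n$ this norm is less than $1$ and (iv) gives
$$
\rho_\Omega^q(u_n-u)\;\le\;\|u_n-u\|_{L^{q(\cdot)}(\Omega)}^{q^-}\;\to\;0.
$$
Conversely, if $\rho_\Omega^q(u_n-u)\to 0$, then eventually $\rho_\Omega^q(u_n-u)<1$; by (ii) this forces $\|u_n-u\|_{L^{q(\cdot)}(\Omega)}\le 1$, after which (iv) delivers $\|u_n-u\|_{L^{q(\cdot)}(\Omega)}^{q^+}\le \rho_\Omega^q(u_n-u)\to 0$, so the norm tends to zero.

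The only genuinely subtle point is the limit analysis in the opening step, namely showing that $\psi(\eta)\to +\infty$ as $\eta\to 0^+$: this uses the non-triviality of $u$ together with the reverse Fatou/monotone convergence to reduce to a set where $|u|$ is bounded below. Everything else is bookkeeping with monotone power functions on $(0,1)$ versus $(1,\infty)$, and a clean reuse of (iv) for the equivalence of modular and norm convergence in (v).
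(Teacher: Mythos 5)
Your proof is correct and is essentially the standard argument for these modular--norm relations: the paper offers no proof of its own here (it simply cites Fan--Zhao), and the route you take --- strict monotonicity and continuity of $\eta\mapsto\rho_{\Om}^q(u/\eta)$ to attain the Luxemburg infimum, then the pointwise comparison of $\eta^{-q(x)}$ with $\eta^{-q^\pm}$ --- is exactly the one used in that reference. Your reading of the exponents $p^\pm$ in (iii)--(iv) as $q^\pm$ correctly identifies a typo in the statement.
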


Let $q'$ be the conjugate function of $q,  $ that is,  $1/q(x)+1/q'(x)=1$.

\begin{prp}{\rm (H\"{o}lder inequality)} \label{Holder}{\rm (\cite{fan})}	
	For any $u\in
	L^{q(\cdot)}(\Om)$ and $v\in L^{q'(\cdot)}(\Om)$, we have
	{$$
		\Big|\int_{\Om} uv\,d x\Big|
		\leq
		2\|u\|_{ L^{q(\cdot)}(\Om)}\|{v}\|_{ L^{q'(\cdot)}(\Om)}.
		$$}
\end{prp}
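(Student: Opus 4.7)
The plan is a two-step reduction: first normalize $u$ and $v$ by their Luxemburg norms so that the associated modulars become $1$, then apply the classical pointwise Young inequality at each $x$ and integrate over $\Om$. This is the textbook route for the variable-exponent Hölder inequality, and essentially no obstacle arises; the factor $2$ on the right is a natural byproduct of this argument rather than anything to fight against.

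More precisely, I would first dispose of the degenerate cases: if $\|u\|_{L^{q(\cdot)}(\Om)}=0$ or $\|v\|_{L^{q'(\cdot)}(\Om)}=0$, the corresponding function vanishes a.e., so $uv=0$ a.e.\ and the inequality is trivial. Assume from now on that both norms are strictly positive, and set
$$\tilde u:=\frac{u}{\|u\|_{L^{q(\cdot)}(\Om)}},\qquad \tilde v:=\frac{v}{\|v\|_{L^{q'(\cdot)}(\Om)}}.$$
By Proposition \ref{norm-mod}(i), these normalizations place both functions exactly on the unit modular sphere: $\rho^q_\Om(\tilde u)=1$ and $\rho^{q'}_\Om(\tilde v)=1$.

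Next, I would invoke the pointwise, constant-exponent Young inequality at each $x\in\Om$: since $q(x),q'(x)>1$ are conjugate, we have
$$|\tilde u(x)\tilde v(x)|\;\le\; \frac{|\tilde u(x)|^{q(x)}}{q(x)}+\frac{|\tilde v(x)|^{q'(x)}}{q'(x)}\;\le\; |\tilde u(x)|^{q(x)}+|\tilde v(x)|^{q'(x)},$$
where the last bound uses $1/q(x),\,1/q'(x)\le 1$. Integrating over $\Om$ and using the two modular identities,
$$\int_\Om |\tilde u\,\tilde v|\,dx\;\le\; \rho^q_\Om(\tilde u)+\rho^{q'}_\Om(\tilde v)\;=\;2.$$
Multiplying through by $\|u\|_{L^{q(\cdot)}(\Om)}\|v\|_{L^{q'(\cdot)}(\Om)}$ and estimating $\bigl|\int_\Om uv\,dx\bigr|\le \int_\Om |uv|\,dx$ yields the claim.

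The main obstacle is really cosmetic: the only thing worth flagging is that the constant $2$, rather than $1$ as in the classical Hölder inequality, is an essentially unavoidable cost of applying pointwise Young to two independently normalized functions without an extra free parameter to optimize over. Sharper constants (approaching $1$ under suitable regularity on $q$) are available via more delicate Nakano-type arguments, but are not needed for the stated form of the inequality, so I would not pursue them.
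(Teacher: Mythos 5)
Your proof is correct and is essentially the standard argument from the cited reference \cite{fan} (the paper itself gives no proof, only the citation): normalize by the Luxemburg norms, use Proposition \ref{norm-mod}(i) to get unit modulars, apply pointwise Young's inequality, and integrate. The handling of the degenerate cases and the remark on the origin of the constant $2$ are both appropriate; nothing further is needed.
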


\begin{lemma}{\rm(\cite[Lemma A.1]{sweta})}\label{lemA1}
Let  $\vartheta_1(x)\in L^\infty(\Om)$ such that $\vartheta_1\geq0,\; \vartheta_1\not\equiv 0.$ Let $\vartheta_2:\Om\to \RR$ be a measurable function
such that $\vartheta_1(x)\vartheta_2(x)\geq 1$ a.e. in $\Om.$ Then for every $u\in L^{\vartheta_1(x)\vartheta_2(x)}(\Om),$
$$
\parallel |u|^{\vartheta_1(\cdot)}\parallel_{L^{\vartheta_2(x)}(\Om)}\leq\parallel u\parallel_{L^{\vartheta_1(x)\vartheta_2(x)}(\Om)}^{\vartheta_1^-}+\parallel u\parallel_{L^{\vartheta_1(x)\vartheta_2(x)}(\Om)}^{\vartheta_1^+}.
$$
\end{lemma}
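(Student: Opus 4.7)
The plan is to reduce everything to the characterization of the Luxemburg norm via the modular. Set $\sigma := \|u\|_{L^{\vartheta_1(x)\vartheta_2(x)}(\Om)}$. The case $\sigma=0$ is trivial since then $u\equiv 0$ a.e., so $|u|^{\vartheta_1(\cdot)}\equiv 0$ a.e. (the hypothesis $\vartheta_1\vartheta_2\geq 1$ forces $\vartheta_1>0$ a.e.) and both sides vanish. Assume therefore $\sigma>0$. By Proposition \ref{norm-mod}(i) applied to the exponent $\vartheta_1(x)\vartheta_2(x)$ (or directly by definition of the Luxemburg norm), we have
$$
\int_{\Om}\left(\frac{|u(x)|}{\sigma}\right)^{\vartheta_1(x)\vartheta_2(x)}dx\leq 1.
$$

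Next, set $\eta:=\sigma^{\vartheta_1^-}+\sigma^{\vartheta_1^+}$; this is precisely the quantity we want on the right-hand side of the claim. The heart of the argument is the pointwise comparison $\eta^{\vartheta_2(x)}\geq \sigma^{\vartheta_1(x)\vartheta_2(x)}$ for a.e.\ $x\in\Om$. To see this, note that $\sigma^{\vartheta_1(x)}\leq \sigma^{\vartheta_1^+}\leq \eta$ when $\sigma\geq 1$, while $\sigma^{\vartheta_1(x)}\leq \sigma^{\vartheta_1^-}\leq \eta$ when $0<\sigma<1$; in either regime one raises both sides to the positive power $\vartheta_2(x)$ and the inequality is preserved. (The choice of $\eta$ as a sum of the two extreme powers is exactly dictated by having to cover both cases $\sigma\gtrless 1$ with a single constant.)

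Using this pointwise estimate inside the integral,
$$
\int_{\Om}\left(\frac{|u(x)|^{\vartheta_1(x)}}{\eta}\right)^{\vartheta_2(x)}dx
=\int_{\Om}\frac{|u(x)|^{\vartheta_1(x)\vartheta_2(x)}}{\eta^{\vartheta_2(x)}}dx
\leq \int_{\Om}\frac{|u(x)|^{\vartheta_1(x)\vartheta_2(x)}}{\sigma^{\vartheta_1(x)\vartheta_2(x)}}dx
\leq 1.
$$
By the definition of the Luxemburg norm, this shows $\||u|^{\vartheta_1(\cdot)}\|_{L^{\vartheta_2(x)}(\Om)}\leq \eta = \sigma^{\vartheta_1^-}+\sigma^{\vartheta_1^+}$, which is exactly the stated inequality.

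There is no serious obstacle here; the only decision to make is the choice of test level $\eta$, and the sum-of-two-powers form in the conclusion is a direct consequence of having to bound $\sigma^{\vartheta_1(x)}$ uniformly in $x$ without knowing whether $\sigma$ lies above or below $1$. The argument does not require continuity of $\vartheta_1$ or $\vartheta_2$—only measurability and the a.e.\ condition $\vartheta_1\vartheta_2\geq 1$ (which guarantees $\vartheta_2\geq 1/\|\vartheta_1\|_{L^\infty}$ a.e., so that $L^{\vartheta_2(\cdot)}(\Om)$ is well-defined as a normed space).
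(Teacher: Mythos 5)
Your argument is correct, and it is the standard proof of this fact (the paper itself gives no proof, citing it from Giacomoni--Tiwari--Warnault, where essentially the same modular computation with the test level $\eta=\sigma^{\vartheta_1^-}+\sigma^{\vartheta_1^+}$ appears). The only step worth making explicit is that $\int_\Om(|u|/\sigma)^{\vartheta_1\vartheta_2}\,dx\le 1$ at the norm $\sigma$ itself: this follows from the definition of the Luxemburg norm together with Fatou's lemma as $\eta\downarrow\sigma$ (the cited Proposition on the unit-ball property is stated only for bounded exponents in $C_+$, whereas $\vartheta_1\vartheta_2$ need not be bounded), but this is routine and does not affect the validity of your proof.
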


\subsection{Variable order fractional Sobolev spaces with variable exponents}

 Next, we define the fractional Sobolev spaces with variable order and variable exponents (see {\cite{rs}}). Define
\begin{align*}
		W&=	W^{s(\cdot,\cdot),\overline {p}(\cdot),p(\cdot,\cdot)}(\Om)\nonumber\\ &~~~~~~~~:=\Big \{ u\in L^{\overline{p}(\cdot)}(\Om):
		\int_{\Om}\int_{\Om}\frac{| u(x)-u(y)|^{p(x,y)}}{\eta^{p(x,y)}| x-y |^{N+s(x,y)p(x,y)}}dxdy<\infty,
		\text{ for some }\eta>0\Big\}
\end{align*}
endowed with the norm
$$
\|u\|_{W}:=\inf \Big \{\eta>0: \rho_W\left(\frac{u}{\eta}\right) <1 \Big \},
$$
where
$$
\rho_{W}(u):=\int_{\Om}\left|u\right|^{\overline{p}(x)}d x+\int_{\Om}\int_{\Om}\frac{|u(x)-u(y)|^{p(x,y)}}{|x-y|^{N+s(x,y)p(x,y)}}\,d xd y
$$
is a modular on $W.$
Then, $(W, \|\cdot\|_W)$ is a separable reflexive Banach space  {(see \cite{rs,ky-ho})}. On $W$ we also make use of the following norm:
$$
|u|_{W}:=\|u\|_{L^{\overline{p}(\cdot)}(\RR^N)}+[u]_{W},
$$
where the seminorm $[\cdot]_W$ is defined as follows:
$$
[u]_{W}:=\inf \Big\{\eta>0:\int_{\Om}\int_{\Om}\frac{|u(x)-u(y)|^{p(x,y)}}{\eta^{p(x,y)}|x-y|^{N+s(x,y)p(x,y)}}\,d x d y <1 \Big \}.
$$
Note that $\|\cdot\|_{W}$ and $|\cdot|_{W}$ are equivalent norms on $W$  with the relation
\begin{equation}\label{equivalency}
\frac{1}{2}\|u\|_{W}\leq |u|_{W}\leq 2\|u\|_{W}, \ \  \text{~for~all~} u\in W.
\end{equation}

The following embedding result is studied in \cite{rs}. We also refer to \cite{ky-ho} where the authors proved the same result when $s(x,y)=s,$ constant.

\begin{thm}[Sub-critical embedding]\label{Subcritical-embd}
Let $\Omega$ be a smooth bounded  domain in $\mathbb{R}^N$ or $\Omega=\mathbb{R}^N$. Let $s$ and $p$ satisfy $(H_1)$ and $(H_2),$ respectively and $\gamma\in C_+(\ol\Om)$ satisfy $1<\gamma(x)<p_s^*(x)$ for all $x\in\ol\Om$.
In addition, when  $\Omega=\mathbb{R}^N$, $\gamma$ is uniformly continuous and  $\ol {p}(x)<\gamma(x)$ for all $x\in \mathbb{R}^N$ and $\inf_{x\in\R^N}\left(p_s^*(x)-\gamma(x)\right)>0$. Then, it holds that
\begin{equation}\label{subcritical.embedding}
W  \hookrightarrow
L^{\gamma(\cdot)}(\Omega).
\end{equation} Moreover, the embedding is compact.
\end{thm}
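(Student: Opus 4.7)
The plan is to reduce the variable-order variable-exponent fractional Sobolev embedding to a finite family of classical constant-exponent fractional Sobolev embeddings via a covering argument exploiting the uniform continuity hypotheses on $s$, $p$ and $\gamma$, and then to obtain compactness from a Fr\'echet--Kolmogorov type criterion adapted to variable exponent Lebesgue spaces.

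For the continuous part, fix $\varepsilon>0$ small. By uniform continuity of $p$ and $s$ on $\overline{\Omega}\times\overline{\Omega}$ and of $\gamma$ on $\overline{\Omega}$, there exists $\delta>0$ such that on any ball $B=B(x_0,\delta)\cap\Omega$ the oscillations of $\overline{p}$, $\overline{s}$ and $\gamma$ are all less than $\varepsilon$. On such a ball choose constants $p_B:=\sup_{B\times B}p$, $s_B:=\inf_{B\times B}s$ and $\gamma_B:=\sup_B\gamma$; for $\varepsilon$ small the strict inequality $\gamma(x)<p_s^*(x)$ on $\overline{\Omega}$ yields $\gamma_B<(p_B)^{*}_{s_B}$. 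Splitting the Gagliardo integrand over $\{|u(x)-u(y)|\ge 1\}$ versus $\{|u(x)-u(y)|<1\}$ and over $\{|x-y|\le 1\}$ versus $\{|x-y|>1\}$ permits one to dominate $|u(x)-u(y)|^{p(x,y)}|x-y|^{-N-s(x,y)p(x,y)}$ by the constant-parameter kernel $|u(x)-u(y)|^{p_B}|x-y|^{-N-s_B p_B}$ up to additive lower-order terms controlled by $\|u\|_{L^{\overline{p}(\cdot)}(B)}$. Hence the $W$-modular dominates $\|u\|_{W^{s_B,p_B}(B)}$, and the classical compact embedding $W^{s_B,p_B}(B)\hookrightarrow L^{\gamma_B}(B)$ on bounded Lipschitz domains, together with $L^{\gamma_B}(B)\hookrightarrow L^{\gamma(\cdot)}(B)$ from Lemma \ref{lemA1}, gives a local embedding. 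A finite subcover of $\overline{\Omega}$ and summation of local estimates, combined with Proposition \ref{norm-mod} to pass between modulars and Luxemburg norms, then yield the global continuous embedding $W\hookrightarrow L^{\gamma(\cdot)}(\Omega)$.

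For compactness, let $(u_n)$ be bounded in $W$. The local argument above shows each restriction $u_n|_B$ is relatively compact in $L^{\gamma_B}(B)$, and a diagonal extraction over the finite cover produces a subsequence converging in $L^{\gamma(\cdot)}(\Omega)$. In the case $\Omega=\mathbb{R}^N$ one must additionally secure tightness at infinity: the hypotheses $\overline{p}(x)<\gamma(x)$ and $\inf_{\mathbb{R}^N}(p_s^*-\gamma)>0$, combined with uniform continuity of the exponents, permit an interpolation of $\|u_n\|_{L^{\gamma(\cdot)}(\mathbb{R}^N\setminus B_R)}$ between $L^{\overline{p}(\cdot)}$ and $L^{p_s^*(\cdot)}$, the latter global critical bound obtained from the constant-exponent critical Sobolev embedding applied on a locally finite cover of $\mathbb{R}^N$.

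The main obstacle will be the coupling of the variable order $s(x,y)$ and variable exponent $p(x,y)$ inside the Gagliardo double integral: the quantity $|u(x)-u(y)|^{p(x,y)}|x-y|^{-N-s(x,y)p(x,y)}$ admits no pointwise comparison with a constant-parameter kernel without the four-region splitting above, and each splitting generates cross terms whose dependence on the ball $B$ must be tracked carefully before they can be absorbed into the global $W$-norm. The $\mathbb{R}^N$ case adds the technical burden of propagating the uniform continuity assumptions to global control near infinity, which is precisely the role of the hypothesis $\inf_{\mathbb{R}^N}(p_s^*-\gamma)>0$.
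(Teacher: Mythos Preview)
The paper does not prove Theorem~\ref{Subcritical-embd}; it is quoted from \cite{rs} (and \cite{ky-ho} for constant $s$), so there is no in-paper argument to compare against. Your overall strategy---cover $\overline\Omega$ by small balls, freeze the exponents on each ball, invoke the classical constant-exponent fractional Sobolev embedding locally, and patch---is precisely the method of those references.

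There is, however, a concrete error in your choice of frozen constants. You take $p_B=\sup_{B\times B}p$ and then assert that the variable-exponent $W$-modular dominates $\|u\|_{W^{s_B,p_B}(B)}$. But on the set $\{|u(x)-u(y)|\ge 1\}$ one has $|u(x)-u(y)|^{p(x,y)}\le |u(x)-u(y)|^{p_B}$, so the constant-exponent Gagliardo integrand is \emph{larger} than the variable one there; no lower-order remainder controlled by $\|u\|_{L^{\overline p(\cdot)}(B)}$ can absorb this gap. (Your own phrasing already signals the inconsistency: you first say the variable kernel is ``dominated by'' the constant one, and then conclude the opposite inequality between the modulars.) The cited references instead choose $p_B$ slightly \emph{below} $\inf_{B\times B}p$ and pair it with $s_B$ so that $s_Bp_B\le s(x,y)p(x,y)$ on $B\times B$; the elementary bound $t^{p_B}\le 1+t^{p(x,y)}$ (valid for all $t\ge 0$ since $p_B\le p(x,y)$) then runs in the correct direction, and the strict subcriticality $\gamma(x)<p_s^*(x)$ on the compact $\overline\Omega$ (respectively the hypothesis $\inf_{\R^N}(p_s^*-\gamma)>0$) still guarantees $\gamma_B<(p_B)^*_{s_B}$ once the balls are small enough. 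With that correction your outline becomes the standard argument; the tightness-by-interpolation mechanism you propose for the $\R^N$ case is also the one used in the cited works.
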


{\bf Notations:}
\begin{itemize}
	\item $
	\delta_\Omega^p(u)=\displaystyle\int_{\Om}\int_{\Om}\frac{|u(x)-u(y)|^{p(x,y)}}{|x-y|^{N+s(x,y)p(x,y)}}\,d x d y.
	$
	\item For any measurable set $\mathcal{S},$  $|\mathcal{S}|$ denotes the Lebesgue measure of the set.
\end{itemize}

\section{Functional setting}

Now, we give the variational framework of problem \eqref{eq1}. Let $s,p$ satisfy $(H_1)$, $(H_2),$ respectively. We set
\begin{align*}
    |u|_{{X{_p}}}:=
    [u]_{s(\cdot),p(\cdot),\R^{2N}\setminus (\mathcal{C}\Omega)^2}
    +\|u\|_{L^{\overline{p}(\cdot)}(\Omega)}+\left\|\beta^{\frac{1}{\overline{p}(\cdot)}}u\right\|_{L^{\overline{p}(\cdot)}(\mathcal{C}\Omega)},
\end{align*}
where $\mathcal{C}\Omega=\R^N\setminus\Omega$ and
\begin{align*}
 X^{s(\cdot)}_{p(\cdot)}:=\left\{u\colon\R^N\to \R \text{ measurable } : \  \|u\|_{{X}_{p}}<\infty\right\}.
\end{align*}
By following standard arguments, it can be seen that $X^{s(\cdot)}_{p(\cdot)}$ is reflexive Banach space with respect to the norm $|\cdot|_{X_{p}}$ (see \cite[Proposition 3.1]{BRW}).

Note that the norm $|\cdot|_{X_p}$ is equivalent on $X_{p(\cdot)}^{s(\cdot)}$ to the following norm:
\begin{align}\label{equivalent_norm}
    \begin{split}
     \|u\|_{X_p}&=\inf\left\{\eta\geq 0 \ \bigg| \ \rho_p\left(\frac{u}{\eta}\right)\leq 1\right\}\\
    &=\inf\left\{\eta\geq 0 \ \bigg | \  \int_{\R^{2N}\setminus (\mathcal{C}\Omega)^2}\frac{|u(x)-u(u)|^{p(x,y)}}{\eta^{p(x,y)}p(x,y)|x-y|^{N+s(x,y)p(x,y)}}\,dx\,dy
    + \int_{\Omega}\frac{|u|^{\overline{p}(x)}}{\overline{p}(x) \eta^{\overline{p}(x)}}\,dx\right.\\
    &\qquad \qquad \qquad \quad \left.
    + \int_{\mathcal{C}\Omega}\frac{\beta(x)}{\eta^{\overline{p}(x)}\overline{p}(x)}
    |u|^{\overline{p}(x)}\,dx\leq 1\right\},
    \end{split}
\end{align}
where the modular $\rho_{p}\colon X_{p(\cdot)}^{s(\cdot)}\to \R$ is defined by
\begin{align}\label{mod}
    \rho_{p}\left(u\right)
    &=  \int_{\R^{2N}\setminus (\mathcal{C}\Omega)^2}\frac{|u(x)-u(u)|^{p(x,y)}}{p(x,y)|x-y|^{N+s(x,y)p(x,y)}}\,dx\,dy
    + \int_{\Omega}\frac{|u|^{\overline{p}(x)}}{\overline{p}(x)}\,dx\nonumber\\
    &\quad
    + \int_{\mathcal{C}\Omega}\frac{\beta(x)}{\overline{p}(x)}
    |u|^{\overline{p}(x)}\,dx.
\end{align}

The following lemma will be helpful in later considerations. The proof of this lemma follows using the similar arguments as in \cite{fan}.

\begin{lemma}\label{modular}
    Let $s,p$ and $\beta$ satisfy $(H_1)$, $(H_2)$  and $(\beta),$ respectively, and let $u \in X_{p(\cdot)}^{s(\cdot)}$. Then the following hold:
    \begin{enumerate}
    \item[(i)]
        For $u\neq 0$ we have: $\|u\|_{X_p}=\eta$ if and only if $\rho_p(\frac{u}{\eta})=1$;
    \item[(ii)] If
        $\|u\|_{X_p}<1$ then ${\|u\|_{X_p}^{p{+}}}\leq \rho_p(u)\leq \|u\|_{X_p}^{p^{-}}$;
    \item[(iii)] If
        $\|u\|_{X_p}>1 $ then $ \|u\|_{X_p}^{p^{-}}\leq \rho_p(u)\leq\|u\|_{X_p}^{p^{+}}$.
    \end{enumerate}
\end{lemma}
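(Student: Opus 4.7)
The plan is to carry over the classical variable-exponent Lebesgue space proof (cf.\ Proposition~\ref{norm-mod}) to the combined modular $\rho_p$ in \eqref{mod}. Two ingredients drive the argument: continuity and strict monotonicity of the scalar map $\eta \mapsto \rho_p(u/\eta)$ on $(0,\infty)$ for fixed $u \not\equiv 0$, and the pointwise scaling identity $|(u/\eta)(x) - (u/\eta)(y)|^{p(x,y)} = \eta^{-p(x,y)} |u(x) - u(y)|^{p(x,y)}$ together with the analogous identities $|(u/\eta)(x)|^{\ol p(x)} = \eta^{-\ol p(x)} |u(x)|^{\ol p(x)}$ governing the two $\ol p(\cdot)$-integrands on $\Omega$ and on $\mathcal{C}\Omega$.

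For (i), I first check the asymptotic behaviour of $\eta \mapsto \rho_p(u/\eta)$. As $\eta \to \infty$, each of the three integrands in \eqref{mod} tends to $0$ pointwise and is dominated by its value at $\eta = 1$, so dominated convergence gives $\rho_p(u/\eta) \to 0$; conversely, as $\eta \to 0^+$, Fatou's lemma applied to each integrand yields $\rho_p(u/\eta) \to \infty$ whenever $u \not\equiv 0$ (membership in $X^{s(\cdot)}_{p(\cdot)}$ combined with $u \not\equiv 0$ forces at least one of the three pieces of $\rho_p(u)$ to be strictly positive). Combined with strict monotonicity (immediate from the scaling identity since $\eta \mapsto \eta^{-p(x,y)}$ is strictly decreasing) and continuity, there is a unique $\eta^{\ast}>0$ with $\rho_p(u/\eta^{\ast}) = 1$, and by the definition of the norm in \eqref{equivalent_norm} this forces $\eta^{\ast}=\|u\|_{X_p}$.

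For (ii) and (iii), set $\eta := \|u\|_{X_p}$, so that $\rho_p(u/\eta) = 1$ by (i). Using the scaling identities, the integrand of $\rho_p(u/\eta)$ equals $\eta^{-p(x,y)}$ (respectively $\eta^{-\ol p(x)}$) times the integrand of $\rho_p(u)$. Since $\ol p(x) = p(x,x)$ and $(H_2)$ gives $p^{-} \leq \ol p(x) \leq p^{+}$ throughout, the bounds $\eta^{-p^{+}} \leq \eta^{-p(x,y)} \leq \eta^{-p^{-}}$ when $\eta>1$ and $\eta^{-p^{-}} \leq \eta^{-p(x,y)} \leq \eta^{-p^{+}}$ when $\eta<1$ apply uniformly to all three pieces of $\rho_p$. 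Integrating and using $\rho_p(u/\eta) = 1$ then gives $\eta^{-p^{+}} \rho_p(u) \leq 1 \leq \eta^{-p^{-}} \rho_p(u)$ when $\eta>1$, i.e.\ $\|u\|_{X_p}^{p^{-}} \leq \rho_p(u) \leq \|u\|_{X_p}^{p^{+}}$, which is (iii); the regime $\eta<1$ swaps the exponents and yields (ii). The argument is essentially routine; the only mild bookkeeping subtlety is the coupled three-piece structure of $\rho_p$ (a double integral plus two single integrals, one weighted by $\beta$), handled by applying the $p^{\pm}$-sandwich uniformly to each piece, so the proof closely mirrors \cite{fan} as the authors indicate.
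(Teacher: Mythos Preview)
Your proof is correct and follows exactly the standard Fan--Zhao argument the paper refers to (the paper gives no detailed proof, merely stating that it ``follows using the similar arguments as in \cite{fan}''). The only addition you make is spelling out how the three-piece structure of $\rho_p$ is handled uniformly by the $p^{\pm}$-sandwich, which is precisely the routine adaptation required.
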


\begin{lemma}\label{embd}
Let $\Om$ be a smooth bounded domain in $\R^N.$ Let $s$ and $p$ satisfy $(H1)$ and $(H2),$ respectively and $(\beta)$ hold. Then for any $\gamma\in C_+(\ol\Om)$ satisfying $1<\gamma(x)<p_s^*(x)$ for all $x\in\ol\Om,$ there exists a constant $C(s,p,N,\gamma,\Om)>0$ such that
$$
\|u\|_{L^{\gamma(\cdot)}(\Om)}\leq C(s,p,N,\gamma,\Om)\|u\|_{X_p} \text{~~~~for all~~} u\in X,
$$
moreover this embedding is compact.
\end{lemma}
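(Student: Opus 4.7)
The plan is to reduce the statement to the sub-critical embedding theorem already available on the space $W=W^{s(\cdot,\cdot),\overline{p}(\cdot),p(\cdot,\cdot)}(\Omega)$ (Theorem \ref{Subcritical-embd}). Since every $u\in X_{p(\cdot)}^{s(\cdot)}$ restricts to a function on $\Omega$, it suffices to establish the continuous embedding
\[
X_{p(\cdot)}^{s(\cdot)}\hookrightarrow W,\qquad \|u\big|_\Omega\|_W\le C(p,\Omega)\,\|u\|_{X_p},
\]
and then compose with the (compact) sub-critical embedding $W\hookrightarrow L^{\gamma(\cdot)}(\Omega)$.

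The first and main step is a modular comparison. Observe that $\R^{2N}\setminus(\mathcal{C}\Omega)^2=(\Omega\times\Omega)\cup(\Omega\times\mathcal{C}\Omega)\cup(\mathcal{C}\Omega\times\Omega)$, hence $\Omega\times\Omega\subset\R^{2N}\setminus(\mathcal{C}\Omega)^2$. Using this together with the bounds $p^-\le p(\cdot,\cdot),\overline{p}(\cdot)\le p^+$, one immediately gets
\[
\int_{\Omega}\!\int_{\Omega}\frac{|u(x)-u(y)|^{p(x,y)}}{|x-y|^{N+s(x,y)p(x,y)}}\,dx\,dy
\le p^{+}\!\!\int_{\R^{2N}\setminus(\mathcal{C}\Omega)^2}\!\frac{|u(x)-u(y)|^{p(x,y)}}{p(x,y)|x-y|^{N+s(x,y)p(x,y)}}dx\,dy,
\]
and
\[
\int_{\Omega}|u|^{\overline{p}(x)}\,dx\le p^{+}\int_{\Omega}\frac{|u|^{\overline{p}(x)}}{\overline{p}(x)}\,dx.
\]
Dropping the nonnegative boundary term in $\rho_p$ yields $\rho_W(u|_\Omega)\le 2p^{+}\,\rho_p(u)$.

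Next I convert this modular inequality into a norm inequality. If $u\ne 0$ and $\eta=\|u\|_{X_p}$, then by Lemma \ref{modular}(i) we have $\rho_p(u/\eta)=1$, so $\rho_W(u/\eta|_\Omega)\le 2p^{+}$. Applying Proposition \ref{norm-mod} (parts (iii)--(iv)) to the modular on $W$, this forces
\[
\|u|_\Omega\|_W\le \max\!\left\{(2p^{+})^{1/p^{-}},(2p^{+})^{1/p^{+}}\right\}\,\|u\|_{X_p}.
\]
Combined with Theorem \ref{Subcritical-embd}, which supplies a constant $C(s,p,N,\gamma,\Omega)$ with $\|u\|_{L^{\gamma(\cdot)}(\Omega)}\le C\|u|_\Omega\|_W$, this gives the desired continuous embedding. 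Compactness follows in the same way: any bounded sequence $(u_n)\subset X_{p(\cdot)}^{s(\cdot)}$ produces a bounded sequence $(u_n|_\Omega)$ in $W$, and the compact embedding in Theorem \ref{Subcritical-embd} yields a subsequence converging in $L^{\gamma(\cdot)}(\Omega)$.

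The only delicate point is the modular-to-norm passage, since the modulars $\rho_p$ and $\rho_W$ differ by the weight factors $1/p(x,y)$ and $1/\overline{p}(x)$ and by the additional pieces of $\rho_p$ supported on $\Omega\times\mathcal{C}\Omega$, $\mathcal{C}\Omega\times\Omega$ and $\mathcal{C}\Omega$; the estimate above handles this by the uniform bounds $p^-\le p\le p^+$ and nonnegativity of $\beta$. Apart from that, the argument is entirely structural and mirrors the classical proof of $H^s(\Omega)$ embeddings via Gagliardo-type comparison.
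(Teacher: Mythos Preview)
Your proof is correct and follows exactly the same strategy as the paper: establish the continuous embedding $X_{p(\cdot)}^{s(\cdot)}\hookrightarrow W$ and then invoke Theorem~\ref{Subcritical-embd}. The paper's own argument is just the two lines ``It can easily be seen that $\|u\|_W\le\|u\|_{X_p}$; now apply Theorem~\ref{Subcritical-embd}''; you have simply supplied the details of the norm comparison via the modular inequality $\rho_W(u|_\Omega)\le 2p^+\rho_p(u)$, which is a legitimate way to justify that step (note only that the modular-to-norm relations you cite from Proposition~\ref{norm-mod} are stated there for $L^{q(\cdot)}$, but the identical argument holds for the modular on $W$).
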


\begin{proof}
It can easily be seen that $\|u\|_W\leq\|u\|_{X_p}.$ Now by applying Theorem \ref{Subcritical-embd}
we get our desired result.
\end{proof}
 In order to deal with fractional $p_1(\cdot)\&p_2(\cdot)$-Laplacian problems, we consider the space
$$
X:=X^{s(\cdot)}_{p_1(\cdot)}\cap X^{s(\cdot)}_{p_2(\cdot)}
$$
endowed with the norm
$$
|u|_X=\|u\|_{{X}_{p_1}}+\|u\|_{{X}_{p_2}}.
$$ Clearly $X$ is reflexive and separable Banach space with respect to the above norm.
It is not difficult to see we can make use of another norm on $X$ equivalent to $|\cdot|_{X}$ given as
$$\|u\|:=\|u\|_{X}= \inf\left\{\eta\geq 0 \ \bigg| \ \rho\left(\frac{u}{\eta}\right)\leq 1\right\},$$ where the modular $\rho:X\to\R$ is defined as $$\rho(u)=\rho_{p_1}(u)+\rho_{p_2}(u)$$ such that $\rho_{p_1},\rho_{p_2}$ are  described  as in \eqref{mod}.

 \begin{lemma}\label{norm-modular}
	Let hypotheses $(H_1)$-$(H_2)$ and $(\beta)$ be satisfied and let $u \in X$. Then the following hold:
	\begin{enumerate}
		\item[(i)]
		For $u\neq 0$ we have: $\|u\|=\eta$ if and only if $\rho(\frac{u}{\eta})=1$;
		\item[(ii)] If
		$\|u\|<1$ then $\|u\|^{p_2^+}\leq\rho(u)\leq \|u\|^{p_1^-}$;
		\item[(iii)] If
		$\|u\|>1 $ then $ \|u\|^{p_1^-}\leq \rho(u)\leq \|u\|^{p_2^+}$.
	\end{enumerate}
\end{lemma}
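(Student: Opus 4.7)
The plan is to mirror the proof of Lemma \ref{modular} for a single component, the only new ingredient being a uniform two-sided scaling estimate for the sum modular $\rho=\rho_{p_1}+\rho_{p_2}$. By hypothesis $(H_2)$, every pointwise exponent appearing in the integrands of $\rho$ lies in the interval $[p_1^-,p_2^+]$, so the extreme homogeneity degrees control everything simultaneously.

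\textbf{Step 1 (Scaling estimates).} First I would verify that for all $u\in X$ and all $\lambda>0$:
\begin{align*}
0<\lambda\leq 1 &\implies \lambda^{p_2^+}\rho(u)\leq \rho(\lambda u)\leq \lambda^{p_1^-}\rho(u),\\
\lambda\geq 1 &\implies \lambda^{p_1^-}\rho(u)\leq \rho(\lambda u)\leq \lambda^{p_2^+}\rho(u).
\end{align*}
This is immediate once one observes that each integrand defining $\rho(\lambda u)$ (the two double integrals over $\R^{2N}\setminus(\mathcal{C}\Omega)^2$ and the two $\bar p_i$-integrals over $\Omega$ and $\mathcal{C}\Omega$) picks up a factor $\lambda^{p_i(x,y)}$ or $\lambda^{\bar p_i(x)}$ with exponent in $[p_1^-,p_2^+]$, and then comparing $\lambda^a$ with $\lambda^{p_1^-}$ and $\lambda^{p_2^+}$ pointwise before integrating.

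\textbf{Step 2 (Proof of (i)).} Fix $u\neq 0$. Using Step 1 one checks that $\lambda\mapsto\rho(u/\lambda)$ is continuous on $(0,\infty)$, strictly decreasing, blows up as $\lambda\to 0^+$ and tends to $0$ as $\lambda\to\infty$. Hence there is a unique $\eta_0>0$ with $\rho(u/\eta_0)=1$. The infimum defining $\|u\|$ in \eqref{equivalent_norm} is then attained precisely at $\eta_0$, which gives the equivalence $\|u\|=\eta \Longleftrightarrow \rho(u/\eta)=1$.

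\textbf{Step 3 (Proofs of (ii) and (iii)).} Write $u=\|u\|\cdot(u/\|u\|)$ and apply Step 1 with $v=u/\|u\|$ and $\lambda=\|u\|$, using $\rho(v)=1$ from (i). If $\|u\|<1$, the first line of Step 1 yields $\|u\|^{p_2^+}\leq\rho(u)\leq\|u\|^{p_1^-}$, which is (ii). If $\|u\|>1$, the second line gives $\|u\|^{p_1^-}\leq\rho(u)\leq\|u\|^{p_2^+}$, which is (iii).

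\textbf{Main obstacle.} There is no deep difficulty here: once the scaling inequalities in Step 1 are written down, everything else reduces to the standard Luxemburg-modular correspondence used already in Lemma \ref{modular}. The one piece requiring attention is that the various densities in $\rho$ carry different variable exponents, so the bounds must be taken uniformly over the full range $[p_1^-,p_2^+]$; this is exactly what forces the asymmetric pair of exponents $(p_1^-,p_2^+)$ on opposite sides of the inequalities in (ii) and (iii), in contrast with the symmetric $(p^-,p^+)$ pair appearing in Lemma \ref{modular}.
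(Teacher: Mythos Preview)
Your proof is correct and follows exactly the standard Luxemburg-modular argument that the paper relies on; in fact the paper states Lemma \ref{norm-modular} without proof, treating it as an immediate extension of Lemma \ref{modular} (which itself is referred to \cite{fan}). Your Step 1 scaling inequalities, with the uniform exponent range $[p_1^-,p_2^+]$ coming from $(H_2)$, are precisely the observation that makes the single-exponent argument go through for the sum modular $\rho=\rho_{p_1}+\rho_{p_2}$, so nothing more is needed.
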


\begin{lemma}\label{embd-X}
Let $\Om$ be a smooth bounded domain in $\R^N.$ Let $s$ and $p_i$ satisfy $(H1)$ and $(H2),$ respectively for $i=1,2$ and $(\beta)$ hold. Then for any $\gamma\in C_+(\ol\Om)$ satisfying $1<\gamma(x)<{p}_{2_s}^*(x)$ for all $x\in\ol\Om,$
 there exists a constant $C(s,p_i,N,\gamma,\Om)>0$ such that
$$
\|u\|_{L^{\gamma(\cdot)}(\Om)}\leq C(s,p_i,N,\gamma,\Om)\|u\| \text{~~~~for all~~} u\in X,
$$
moreover this embedding is compact.
\end{lemma}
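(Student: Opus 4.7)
The plan is to reduce this result to Lemma \ref{embd} applied to the exponent $p_2$, exploiting the fact that the intersection space $X$ is continuously embedded into each of its factors, in particular into $X_{p_2(\cdot)}^{s(\cdot)}$.

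First I would verify the inclusion $X \hookrightarrow X_{p_2(\cdot)}^{s(\cdot)}$ is continuous, i.e., that $\|u\|_{X_{p_2}}\le C\|u\|$ for some $C>0$. Since $|u|_X = \|u\|_{X_{p_1}}+\|u\|_{X_{p_2}}$ dominates $\|u\|_{X_{p_2}}$ trivially, and since $|\cdot|_X$ is equivalent to $\|\cdot\|$ on $X$ (an elementary modular-vs-norm comparison analogous to the one that gave \eqref{equivalency} and Lemma \ref{norm-modular}), this inequality is immediate.

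Next, since $\gamma\in C_+(\ol\Om)$ satisfies $1<\gamma(x)<p_{2_s}^*(x)$ on $\ol\Om$, Lemma \ref{embd} applied with $p=p_2$ provides a continuous compact embedding
\[
X_{p_2(\cdot)}^{s(\cdot)}\hookrightarrow\hookrightarrow L^{\gamma(\cdot)}(\Om),
\]
with embedding constant $C(s,p_2,N,\gamma,\Om)>0$. Composing with the previous step yields the desired inequality $\|u\|_{L^{\gamma(\cdot)}(\Om)}\le C(s,p_i,N,\gamma,\Om)\|u\|$ for all $u\in X$.

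For compactness, if $\{u_n\}\subset X$ is bounded, then $\{u_n\}$ is bounded in $X_{p_2(\cdot)}^{s(\cdot)}$ by the first step, so by Lemma \ref{embd} it admits a subsequence converging strongly in $L^{\gamma(\cdot)}(\Om)$; this yields compactness of $X\hookrightarrow L^{\gamma(\cdot)}(\Om)$. There is no real obstacle here: the entire argument is a routine composition of a trivial norm inequality with the previously established sub-critical embedding for a single exponent. The only minor point requiring attention is confirming that the admissible range for $\gamma$ is determined by $p_2$ (the larger exponent, since $p_1^+<p_2^-$ implies $p_{1_s}^*(x)<p_{2_s}^*(x)$), which is why working with $p_2$ in Lemma \ref{embd} gives the widest admissible class of target exponents.
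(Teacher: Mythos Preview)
Your proposal is correct and follows essentially the same approach as the paper, which simply states that the result follows directly from the definition of $\|u\|$ and Lemma~\ref{embd}. You have spelled out explicitly the two ingredients the paper leaves implicit: the trivial continuous inclusion $X\hookrightarrow X_{p_2(\cdot)}^{s(\cdot)}$ coming from the norm definition, and the application of Lemma~\ref{embd} with $p=p_2$ to obtain the compact embedding into $L^{\gamma(\cdot)}(\Om)$.
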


\begin{proof}
The proof directly follows from the definition of $\|u\|$ and Lemma \ref{embd}.
\end{proof}

Throughout this article $X^*$ represents the topological dual of $X$.

\begin{lemma}\label{s+}
Let hypotheses $(H_1)$-$(H_2)$ and $(\beta)$ be satisfied. Then $\rho:X \to\mathbb R$ and $\rho':X\to X^*$ have the following properties:
\begin{itemize}
\item [$(i)$] The function $\rho$ is of class $C^1(X,\mathbb R)$ and $\rho':X\to X^*$ is coercive, that is, $$\frac{\langle\rho'(u), u\rangle}{\|u\|}\to+\infty\text{\;\;as\;}\|u\|\to+\infty.$$
\item[$(ii)$] $\rho'$ is strictly monotone operator.
\item[$(iii)$] $\rho'$  is a mapping of type $(S_+)$, that is, if $u_n\rightharpoonup u$ in $X$ and $\displaystyle\limsup_{n\to+\infty} \langle\rho'(u_n), u_n-u\rangle\leq0$, then $u_n\to u$ strongly in $X$.
\end{itemize}
\end{lemma}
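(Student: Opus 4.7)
The plan is to verify each of the three items separately, using the modular structure of $\rho$ together with classical tools (dominated convergence, Simon-type inequalities, and the norm-modular correspondence from Lemma~\ref{norm-modular}).

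For (i), I would first split $\rho = \rho_{p_1}+\rho_{p_2}$ and, for each piece, establish Gâteaux differentiability by differentiating under the integral sign. The candidate derivative
\[
\langle \rho'(u),v\rangle = \sum_{i=1,2}\Big[\iint\tfrac{|u(x)-u(y)|^{p_i(x,y)-2}(u(x)-u(y))(v(x)-v(y))}{|x-y|^{N+s(x,y)p_i(x,y)}}\,dx\,dy + \int_{\Omega}|u|^{\overline{p}_i(x)-2}uv\,dx+ \int_{\mathcal{C}\Omega}\beta(x)|u|^{\overline{p}_i(x)-2}uv\,dx\Big]
\]
is controlled by Young's inequality applied in the variable exponent setting, so dominated convergence yields differentiability and then continuity of $\rho'\colon X\to X^*$. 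For coercivity, observe that the representation above gives $\langle \rho'(u),u\rangle \geq p_1^-\,\rho(u)$, and by Lemma~\ref{norm-modular}(iii) we have $\rho(u)\geq \|u\|^{p_1^-}$ once $\|u\|>1$. Since $p_1^->1$, dividing by $\|u\|$ yields $\langle\rho'(u),u\rangle/\|u\|\to\infty$.

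For (ii), strict monotonicity follows pointwise. For every $p>1$, the elementary inequality $(|a|^{p-2}a-|b|^{p-2}b)(a-b)\geq 0$ holds, with strict inequality whenever $a\neq b$. Applied to each of the six integrands that make up $\langle\rho'(u)-\rho'(v),u-v\rangle$ (three integrals per index $i=1,2$: the fractional one with $a=u(x)-u(y)$, $b=v(x)-v(y)$, and the two pointwise ones with $a=u(x)$, $b=v(x)$, weighted by $1$ on $\Omega$ and by $\beta$ on $\mathcal{C}\Omega$), this shows the integrands are nonnegative. If $u\neq v$ in $X$ then at least one of $u\not\equiv v$ on $\Omega$ or $u(x)-u(y)\not\equiv v(x)-v(y)$ on a set of positive measure in $\R^{2N}\setminus(\mathcal{C}\Omega)^2$ must hold, so one of these integrals is strictly positive.

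For (iii), the $(S_+)$ property, combine the assumption with monotonicity: writing
\[
0\leq \langle \rho'(u_n)-\rho'(u),u_n-u\rangle = \langle \rho'(u_n),u_n-u\rangle - \langle \rho'(u),u_n-u\rangle,
\]
the second term on the right vanishes by weak convergence, while the first is nonpositive in the $\limsup$, so $\langle \rho'(u_n)-\rho'(u),u_n-u\rangle \to 0$. Since each of the six integrands defining this duality pairing is nonnegative, each one converges to $0$ in $L^1$, and along a subsequence converges to $0$ a.e. Now invoke the Simon inequalities: when $p_i\geq 2$, $(|a|^{p-2}a-|b|^{p-2}b)(a-b)\geq 2^{2-p}|a-b|^{p}$; when $1<p_i<2$, one uses $|a-b|^{p}\leq C\bigl((|a|^{p-2}a-|b|^{p-2}b)(a-b)\bigr)^{p/2}(|a|+|b|)^{p(2-p)/2}$ together with the boundedness of $(u_n)$ in $X$ and Hölder's inequality in variable exponent spaces. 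In either case one concludes that the modular of $u_n-u$ tends to $0$, i.e.\ $\rho_{p_i}(u_n-u)\to 0$ for $i=1,2$. By Lemma~\ref{norm-modular}(ii), this forces $\|u_n-u\|\to 0$. The main obstacle is the case $1<p_i<2$ in the Simon estimate, where one must combine the monotonicity bound with a uniform bound on $(u_n)$ in $X$ and apply the variable exponent Hölder inequality piece by piece; once that is handled, the rest is a direct modular-to-norm transfer.
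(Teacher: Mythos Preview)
Your outline is correct and is precisely the standard argument for modulars of this form. The paper does not actually give a proof: it simply remarks that the argument is identical to \cite[Lemma~4.2]{BR}, with the domain of integration $\Omega\times\Omega$ replaced by $\R^{2N}\setminus(\mathcal{C}\Omega)^2$. What you have written is exactly the content of that referenced proof (dominated convergence for $C^1$, the pointwise inequality $(|a|^{p-2}a-|b|^{p-2}b)(a-b)\geq 0$ for monotonicity, and the Simon-type estimates split according to $p\ge 2$ versus $1<p<2$ for $(S_+)$), so your approach and the paper's coincide.
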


\begin{proof}
 The proof of this result is similar to the proof of \cite[Lemma 4.2]{BR}, just noticing that, the quantities $\R^{2N}\setminus (\mathcal{C}\Omega)^2$ and $\Omega\times\Omega$ play a symmetrical role.
\end{proof}

As proved in \cite[Proposition 3.6]{BRW}, the following integration by parts formula arises naturally for $u\in C^2$ functions:

\begin{align}\label{integrationbypartsformula}
    &\frac{1}{2}\int_{\R^{2N}\setminus(\mathcal{C}\Omega)^2}|u(x)-u(y)|^{p(x,y)-2}\frac{(u(x)-u(y))(v(x)-v(y))}{|x-y|^{N+s(x,y)p(x,y)}}\,dx\,dy\nonumber\\
    &=\int_{\Omega}v(-\Delta)^{s(\cdot)}_{p(\cdot)}u\,dx
    +\int_{\mathcal{C}\Omega}v\mathcal{N}^{s(\cdot)}_{p(\cdot)}\, dx.
    \end{align}

The previous integration by parts formula leads to the following definition:
\begin{definition}
  We say that $u\in X$ is a weak solution to \eqref{eq1} if for any $v\in X$ we have
  \begin{align}\label{weak-formula}
    &\mathcal{H}(u,v)\nonumber\\&=\frac{1}{2}\int_{\R^{2N}\setminus(\mathcal{C}\Omega)^2}\frac{|u(x)-u(y)|^{p_1(x,y)-2}(u(x)-u(y))(v(x)-v(y))}{|x-y|^{N+s(x,y)p_1(x,y)}}dxdy + \int_{\Omega}|u|^{\overline{p}_1(x)-2}uvdx\nonumber\\
    &+\frac{1}{2}\int_{\R^{2N}\setminus(\mathcal{C}\Omega)^2}\frac{|u(x)-u(y)|^{p_2(x,y)-2}(u(x)-u(y))(v(x)-v(y))}{|x-y|^{N+s(x,y)p_2(x,y)}}dxdy +\int_{\Omega}|u|^{\overline{p}_2(x)-2}uvdx\nonumber\\
    &-\int_{\Omega}f(x,u)v\,dx +\int_{\mathcal{C}\Omega} \beta(x)|u|^{\overline{p}_1(x)-2}uv\,dx+\int_{\mathcal{C}\Omega} \beta(x)|u|^{\overline{p}_2(x)-2}uv\,dx\nonumber\\
    &=0.\nonumber\\
  \end{align}
\end{definition}

The problem taken into account in the present paper has a variational structure, namely its solutions can be found as critical points of the associated  energy functional. Hence, our problem can be studied using all the methods which aim to prove the existence of a critical point for a functional.

The energy functional associated with problem \eqref{eq1} is the functional $\mathcal{I}\colon X\to \R$ given by

\begin{align*}
  \mathcal{I}(u) & = \frac{1}{2}\int_{\R^{2N}\setminus(\mathcal{C}\Omega)^2}\frac{|u(x)-u(y)|^{p_1(x,y)}}{p_1(x,y)|x-y|^{N+s(x,y)p_1(x,y)}}dxdy+
  \int_{\Omega}\frac{1}{\overline{p}_1(x)}|u|^{\overline{p}_1(x)}dx\\
  &+\frac{1}{2}\int_{\R^{2N}\setminus(\mathcal{C}\Omega)^2}\frac{|u(x)-u(y)|^{p_2(x,y)}}{p_2(x,y)|x-y|^{N+s(x,y)p_2(x,y)}}dxdy+  \int_{\Omega}\frac{1}{\overline{p}_2(x)}|u|^{\overline{p}_2(x)}dx\\
  &+\int_{\mathcal{C}\Omega} \frac{\beta(x)|u|^{\overline{p}_1(x)}}{\overline{p}_1(x)}\,dx+\int_{\mathcal{C}\Omega} \frac{\beta(x)|u|^{\overline{p}_2(x)}}{\overline{p}_2(x)}\,dx-\int_{\Omega}F(x,u)dx.
\end{align*}

A direct computation from \cite[Proposition 3.8]{BRW} shows that the functional $\mathcal{I}$ is well defined on $X$ and $\mathcal{I}\in C^1(X,\R)$ with
$$
\langle \mathcal{I}^{'}(u),v\rangle=\mathcal{H}(u,v)\quad \text{for any}\quad v\in X.
$$
Thus the weak solutions of \eqref{eq1} are precisely the critical points of $\mathcal{I}$.

\section{Proof of Theorem \ref{main.result.1}}

\subsection{Abstract results}

\begin{definition}\label{cc}
  Let $E$ be a Banach space and $E^*$ be its topological dual. Suppose that $\Phi\in C^1(E)$. We say that $\Phi$ satisfies the Cerami condition at the level $c\in\R$ $($the $(C)_c$-condition for short$)$ if the following is true:
  $$
  ``\text{every sequence}\ (u_n)_{n\in\mathbb{N}}\subseteq E\ \text{such that}\ \Phi(u_n)\to c\ \text{and}
  $$
  $$
  (1+\|u_n\|_E)\Phi^{'}(u_n)\to 0\ \text{in}\ E^*\ \text{as}\ n\to+\infty
  $$
  $$
  \text{admits a strongly convergent subsequence}".
  $$
  If this condition holds at every level $c \in\R$, then we say that $\Phi$ satisfies the Cerami condition (the $C$-condition for short).
\end{definition}

The $(C)_c$-condition is weaker than the $(PS)_c$-condition. However, it was shown in \cite{KP} that from $(C)_c$-condition it can obtain a deformation lemma, which is fundamental in order to get some minimax theorems. Thus we have

\begin{thm}\label{MPT}
  If there exist $e\in E$ and $r>0$ such that
  $$
  \|e\|>r,\quad \max(\Phi(0),\Phi(e))\leq \inf_{\|x\|=r}\Phi(x),
  $$
  and
  $\Phi\colon E\to \R$ satisfies the $(C)_c$-condition with
  $$
  c=\inf_{\gamma\in \Gamma}\max_{t\in(0,1)} \Phi(\gamma(t)),
  $$
  where
  $$
  \Gamma=\{\gamma\in C((0,1),E)\colon \gamma(0)=0,\ \gamma(1)=e\}.
  $$
  Then $c\geq \inf_{\|x\|=r}\Phi(x)$ and $c$ is a critical value of $\Phi$.
\end{thm}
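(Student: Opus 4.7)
The plan is to follow the classical strategy for the Mountain Pass Theorem, with the usual deformation lemma replaced by its Cerami-compatible version from \cite{KP}. The argument splits naturally into two parts: the elementary geometric lower bound $c \geq \inf_{\|x\|=r}\Phi(x)$, and the extraction of a critical point at level $c$ via deformation.

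For the lower bound, I would use a pure continuity argument. The value $c$ is finite because the straight-line path $\gamma_0(t) := te$ lies in $\Gamma$ and $\Phi$ is continuous on the compact segment $\{te : t\in[0,1]\}$. For any $\gamma\in\Gamma$, the scalar function $t\mapsto \|\gamma(t)\|_E$ is continuous with $\|\gamma(0)\|_E=0<r<\|e\|_E=\|\gamma(1)\|_E$, so by the intermediate value theorem there exists $t^*\in(0,1)$ with $\|\gamma(t^*)\|_E=r$. Hence
\[
\max_{t\in[0,1]}\Phi(\gamma(t))\;\geq\;\Phi(\gamma(t^*))\;\geq\;\inf_{\|x\|=r}\Phi(x),
\]
and taking the infimum over $\gamma\in\Gamma$ yields $c\geq \inf_{\|x\|=r}\Phi(x)\geq \max(\Phi(0),\Phi(e))$.

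For the criticality of $c$, I would argue by contradiction: suppose $K_c:=\{u\in E:\Phi(u)=c,\ \Phi'(u)=0\}=\emptyset$. The key tool is the quantitative deformation lemma valid under $(C)_c$ (\cite{KP}): for each $\bar\varepsilon>0$ there exist $\varepsilon\in(0,\bar\varepsilon)$ and a continuous map $\eta\colon[0,1]\times E\to E$ with (i) $\eta(0,\cdot)=\mathrm{id}$; (ii) $\eta(t,x)=x$ whenever $|\Phi(x)-c|\geq \bar\varepsilon$; (iii) $\Phi(\eta(1,x))\leq c-\varepsilon$ whenever $\Phi(x)\leq c+\varepsilon$; (iv) $t\mapsto \Phi(\eta(t,x))$ is non-increasing. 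Choose $\bar\varepsilon$ small enough that $c-\bar\varepsilon>\max(\Phi(0),\Phi(e))$, pick $\gamma\in\Gamma$ realizing $\max_t\Phi(\gamma(t))<c+\varepsilon$, and set $\tilde\gamma(t):=\eta(1,\gamma(t))$. By (ii) the endpoints $0$ and $e$ are fixed, so $\tilde\gamma\in\Gamma$; by (iii), $\max_t\Phi(\tilde\gamma(t))\leq c-\varepsilon$, contradicting the definition of $c$.

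The main obstacle is the degenerate case $c=\max(\Phi(0),\Phi(e))=\inf_{\|x\|=r}\Phi(x)$, where $\bar\varepsilon$ cannot be chosen to simultaneously insulate both endpoints from the deformation, so the composed path $\tilde\gamma$ may no longer connect $0$ to $e$. The standard remedy, which I would invoke here, is to apply Ekeland's variational principle to the restriction $\Phi|_{S_r}$ on the closed sphere $S_r=\{\|x\|_E=r\}$ to produce a sequence $(u_n)\subset S_r$ with $\Phi(u_n)\to c$ and whose derivative is, modulo a Lagrange multiplier normal to $S_r$, tending to zero; combined with the $(C)_c$-condition (after verifying boundedness via the factor $(1+\|u_n\|_E)$ built into the Cerami definition) this yields a strongly convergent subsequence whose limit $\bar u$ satisfies $\Phi(\bar u)=c$ and $\Phi'(\bar u)=0$. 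This borderline argument is the genuinely delicate step; the generic case is handled by the deformation argument above essentially verbatim from the Ambrosetti--Rabinowitz proof.
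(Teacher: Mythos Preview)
The paper does not prove Theorem~\ref{MPT}; it is quoted as a known abstract result, the authors merely remarking beforehand that the deformation lemma under the Cerami condition was established in~\cite{KP}, whence such minimax theorems follow. Your sketch is therefore more detailed than anything the paper supplies.

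Your lower-bound argument and your treatment of the non-degenerate case $c>\max(\Phi(0),\Phi(e))$ via the Cerami deformation lemma are standard and correct. The genuine gap is in your proposed remedy for the borderline case $c=\max(\Phi(0),\Phi(e))$. Applying Ekeland's principle to the restriction $\Phi|_{S_r}$ produces a sequence $(u_n)\subset S_r$ with $\Phi(u_n)\to c$ and with only the \emph{tangential} component of $\Phi'(u_n)$ tending to zero; the normal (Lagrange-multiplier) component is entirely uncontrolled, so $(u_n)$ need not be a Cerami sequence for $\Phi$ on $E$, and a subsequential limit $\bar u$ need not satisfy $\Phi'(\bar u)=0$ in $E^*$. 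Nothing in the mountain-pass geometry forces that multiplier to vanish. The standard repairs are either to apply Ekeland on the complete metric space $(\Gamma,d_\infty)$ to the functional $\gamma\mapsto\max_{t}\Phi(\gamma(t))$, which yields a genuine Cerami sequence for $\Phi$ in $E$, or to use the sharper form of the deformation lemma that, when $K_c=\emptyset$, fixes all points outside a neighbourhood of $K_c$ rather than merely outside an energy strip, so the endpoints $0$ and $e$ are automatically fixed regardless of their energy level. Your sphere-restriction idea, as stated, does not close the gap.
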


\subsection{Geometric condition}

\begin{lemma}\label{geo}
 Let $(H_1)$-$(H_2)$, $(\beta)$ and $(f_1)$-$(f_4)$ hold. Then
\begin{enumerate}
  \item [$(i)$] there exist $\alpha>0$ and $R>0$ such that
    \begin{align*}
    \mathcal{I}(u)\geq \beta>0 \quad \text{for any } u\in X \text{ with }\ \|u\|=\alpha.
    \end{align*}
  \item [$(ii)$] there exists $\varphi \in X$ such that $ I(\varphi)<0$.
\end{enumerate}
\end{lemma}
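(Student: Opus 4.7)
\textbf{Proof plan for Lemma \ref{geo}.}

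For part (i), my approach is the usual mountain pass near zero: combine the growth hypotheses $(f_1)$ and $(f_3)$ to write
$$
|F(x,t)|\leq \varepsilon\,|t|^{p_2^+}+C_\varepsilon\,|t|^{r(x)}\qquad\text{for a.e.\ }x\in\Omega,\ \text{all }t\in\R,
$$
where $\varepsilon>0$ is arbitrary (the first term captures the behavior near $t=0$ given by $(f_3)$ and the second absorbs the far-field bound from $(f_1)$, using $r^->p_2^+$ so the crossover integrates). Then, for $u\in X$ with $\|u\|\le 1$, Lemma~\ref{norm-modular}(ii) gives $\mathcal{I}(u)\ge \frac{1}{p_2^+}\|u\|^{p_2^+}-\int_\Omega F(x,u)\,dx$, and the compact embeddings from Lemma~\ref{embd-X} (with the continuous exponents $p_2^+$ and $r(x)$, both sub-critical) plus Lemma~\ref{lemA1} (for the variable-exponent integral) yield
$$
\mathcal{I}(u)\ \ge\ \|u\|^{p_2^+}\Bigl[\tfrac{1}{p_2^+}-\varepsilon\, C_1 - C_\varepsilon\,C_2\,\|u\|^{r^- - p_2^+}\Bigr].
$$
Choosing $\varepsilon$ so small that $\frac{1}{p_2^+}-\varepsilon C_1>0$ and then fixing $\alpha\in(0,1)$ small enough that the bracketed quantity is bounded below by a positive constant $\kappa$, I obtain $\mathcal{I}(u)\ge \kappa\,\alpha^{p_2^+}=:\beta>0$ whenever $\|u\|=\alpha$.

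For part (ii), the key is to exploit the super-$p_2^+$ behavior of $F$ at infinity provided by $(f_2)$ (superlinearity of $F$ with respect to $|t|^{p_2^+}$). I fix any $\varphi_0\in X\setminus\{0\}$, for concreteness non-negative and smooth, and study $\mathcal{I}(t\varphi_0)$ for $t\ge 1$. The crucial observation is that, thanks to the structure of the modular $\rho$ and the pointwise inequality $|t a-t b|^{p_i(x,y)}\le t^{p_2^+}|a-b|^{p_i(x,y)}$ (and analogously for the zero-order and boundary terms), one has
$$
\text{positive part of }\mathcal{I}(t\varphi_0)\ \le\ t^{p_2^+}\,\Lambda(\varphi_0)
$$
for some constant $\Lambda(\varphi_0)$ depending only on $\varphi_0$. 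On the other hand, $(f_2)$ yields, for every $M>0$, a constant $C_M$ such that $F(x,\tau)\ge M|\tau|^{p_2^+}-C_M$ for a.e.\ $x\in\Omega$, $\tau\in\R$, so
$$
\int_\Omega F(x,t\varphi_0)\,dx\ \ge\ M t^{p_2^+}\int_\Omega|\varphi_0|^{p_2^+}dx - C_M|\Omega|.
$$
Combining these, $\mathcal{I}(t\varphi_0)\le t^{p_2^+}\bigl(\Lambda(\varphi_0)-M\,\|\varphi_0\|_{L^{p_2^+}}^{p_2^+}\bigr)+C_M|\Omega|$, and choosing $M$ large enough that the bracket becomes negative, then $t$ large enough to overpower the constant $C_M|\Omega|$, produces $\mathcal{I}(t\varphi_0)<0$. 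Taking $\varphi:=t\varphi_0$ concludes part (ii).

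The main obstacle I anticipate is part (ii): because the functional involves two distinct fractional double-phase exponents and variable-order kernels, one must be careful that the positive contribution of $\mathcal{I}(t\varphi_0)$ is genuinely controlled by $t^{p_2^+}$ (rather than a higher power) while the $\int F$ part still wins asymptotically. This is precisely where $p_2^+$ being the \emph{largest} exponent appearing in $\rho$ and in $(f_2)$ must be used consistently; any slack between the $p_1$-terms and the $p_2$-terms is harmless since they are dominated by $t^{p_2^+}$ for $t\ge 1$. Part (i) is essentially routine once the correct modular-vs-norm inequality from Lemma~\ref{norm-modular}(ii) is invoked with $\|u\|<1$.
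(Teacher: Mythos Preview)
Your proposal is correct and follows essentially the same route as the paper: for (i) you combine $(f_1)$ and $(f_3)$ into the estimate $F(x,t)\le \varepsilon|t|^{p_2^+}+C_\varepsilon|t|^{r(x)}$, then use the modular-norm inequality for $\|u\|<1$ together with the subcritical embeddings, exactly as the paper does (the paper gets the constant $\tfrac12$ in front of $\|u\|^{p_2^+}$ from $\mathcal I(u)+\int F\ge \tfrac12\rho(u)$, so your $\tfrac{1}{p_2^+}$ should be adjusted accordingly, but this is cosmetic); for (ii) you use the same lower bound $F(x,t)\ge M|t|^{p_2^+}-C_M$ coming from $(f_1)$--$(f_2)$ and the same $t^{p_2^+}$ upper bound on the positive part of $\mathcal I(t\varphi_0)$, just be sure to pick $\varphi_0$ with $\int_\Omega|\varphi_0|^{p_2^+}dx>0$ (the paper normalizes $\|e\|=1$ and imposes this explicitly).
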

\begin{proof}
  $(i)$ For any  $\epsilon > 0$, by the assumptions $(f_1)$-$(f_3)$, we have
  \begin{equation}\label{g1}
    F(x,t)\leq \epsilon|t|^{p_2^+}+C(\epsilon)|t|^{r(x)},\quad \text{for a.e.}\ x\in\Omega\ \text{and all}\ t\in \R.
  \end{equation}

\noindent Hence, using Theorem \ref{Subcritical-embd}, Lemma \ref{norm-modular}, Lemma \ref{lemA1} and Lemma \ref{embd-X} for any $u\in X$ with $\|u\|<1$ $($ i.e. $\|u\|_{X_{p_i}}<1, i=1,2 ),$ we obtain
    \begin{align*}
   \mathcal {I}(u)
    &= \frac{1}{2}\int_{\R^{2N}\setminus(\mathcal{C}\Omega)^2}\frac{|u(x)-u(y)|^{p_1(x,y)}}{p_1(x,y)|x-y|^{N+s(x,y)p_1(x,y)}}dxdy+
    \int_{\Omega}\frac{1}{\overline{p}_1(x)}|u|^{\overline{p}_1(x)}dx\\
    &+\frac{1}{2}\int_{\R^{2N}\setminus(\mathcal{C}\Omega)^2}\frac{|u(x)-u(y)|^{p_2(x,y)}}{p_2(x,y)|x-y|^{N+s(x,y)p_2(x,y)}}dxdy+  \int_{\Omega}\frac{1}{\overline{p}_2(x)}|u|^{\overline{p}_2(x)}dx\\
    &+\int_{\mathcal{C}\Omega} \frac{\beta(x)|u|^{\overline{p}_1(x)}}{\overline{p}_1(x)}\,dx+\int_{\mathcal{C}\Omega} \frac{\beta(x)|u|^{\overline{p}_2(x)}}{\overline{p}_2(x)}\,dx\\
    &\quad - \int_{\Omega}F(x,u)\,dx\\
    &\geq \frac{1}{2}\rho(u)- { \e}\int_{\Omega}|u|^{p_2^+}\,dx- C(\epsilon)\int_{\Omega}|u|^{r(x)}\,dx\\
    &\geq \frac{1}{2}\|u\|^{p_2^{+}}
    -\e\|u\|^{p_2^{+}}_{L^{p_2^+}(\Omega)}- C(\epsilon)\left\{\|u\|^{r^{-}}_{L^{r(\cdot)}(\Omega)}+\|u\|^{r^{+}}_{L^{r(\cdot)}(\Omega)}\right\}\\
    &\geq\frac{1}{2}\|u\|^{p_2^{+}}-\e C\|u\|^{p_2^{+}}- C^{'}(\epsilon)\|u\|^{r^-}\\
    &=\left(\frac{1}{2}-\e C\right)\|u\|^{p_2^{+}}- C^{'}(\epsilon)\|u\|^{r^-},
    \end{align*}
where $C^{'}(\epsilon)>0$ is a constant. Consider
$$
0<\e<\frac{1}{4C}.
$$
Since $p_2^+<r^-$, we can choose $\alpha\in(0,1)$ sufficiently small such that for all $u\in X$ with $\|u\|=\alpha$
\begin{align*}
    \mathcal{I}(u)\geq \alpha^{p_2^{+}}\left(\frac{1}{2}-{\epsilon C}\right)-C^{'}(\epsilon)\alpha^{r^{-}}=R>0.
    \end{align*} The proof of $(i)$ is complete.

  \medskip

  $(ii)$ It follows from $(f_1)$ and $(f_2)$ that for any positive constant $M$, there exists a corresponding positive
constant $C_M$ such that
\begin{align}\label{g2}
F(x,t)\geq {M}|t|^{p_2^+}-C_M.
\end{align}
Let $e\in X, ~e>0$ with $\|e\|=1$ and $\int_{\Om}|e|^{p_2^+}dx>0 $ and $t>1$. Then, using Lemma \ref{norm-modular} and \eqref{g2}, we get
  \begin{align*}
    I(t e)
    &=\int_{\R^{2N}\setminus(\mathcal{C}\Omega)^2}t^{p_1(x,y)} \frac{|e(x)-e(y)|^{p_1(x,y)}}{2p_1(x,y)|x-y|^{N+sp_1(x,y)}}\,dx\,dy
    +\int_{\mathcal{C}\Omega}t^{\overline{p}_1(x)} \frac{\beta(x)|e|^{\overline{p}_1(x)}}{\overline{p}_1(x)}\,dx\\
    &\quad +\int_{\Omega}t^{\overline{p}_1(x)}\frac{|e|^{\overline{p}_1(x)}}{\overline{p}_1(x)}\,dx\\
    &+\int_{\R^{2N}\setminus(\mathcal{C}\Omega)^2}t^{p_2(x,y)} \frac{|e(x)-e(y)|^{p_2(x,y)}}{2p_2(x,y)|x-y|^{N+sp_2(x,y)}}\,dx\,dy
    +\int_{\mathcal{C}\Omega}t^{\overline{p}_2(x)} \frac{\beta(x)|e|^{\overline{p}_2(x)}}{\overline{p}_2(x)}\,dx\\
    &\quad +\int_{\Omega}t^{\overline{p}_2(x)}\frac{|e|^{\overline{p}_2(x)}}{\overline{p}_2(x)}\,dx
    -  \int_{\Omega}F(x,te)\,dx\\
    &\leq t^{p_2^+} \rho(e)
 -t^{p_2^+} M\int_{\Omega}|e|^{p_2^+}\,dx+ |\Omega| C_M\\
 &=t^{p_2^+} \left[1- M\int_{\Omega}|e|^{p_2^+}\,dx\right]+|\Omega| C_M
    \end{align*}
    We choose $M$ sufficiently large  so that
    $$\lim_{t\to+\infty}\mathcal{I}(t e)=-\infty.$$ Hence, there exists some $t_0>0$ such that $\mathcal{I}(\varphi)<0,$ where $\varphi=t_0 e.$ Thus the proof of $(ii)$ is complete.
\end{proof}

\subsection{Cerami condition}

\begin{prp}\label{bounded}
  If hypotheses $(H_1)$-$(H_2)$, $(\beta)$ and $(f_1)$-$(f_4)$ hold, then  the functional $\mathcal{I}$ satisfies the $(C)_c$-condition for any $c\in\R.$
\end{prp}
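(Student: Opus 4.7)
My plan is the standard two-stage approach for the Cerami condition: first, establish boundedness of an arbitrary Cerami sequence $(u_n)\subset X$ at level $c$, then upgrade weak convergence to strong convergence using the $(S_+)$-property of $\rho'$ from Lemma \ref{s+}.

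For the boundedness, since the Ambrosetti--Rabinowitz condition is absent, I would employ Jeanjean's monotonicity trick. Arguing by contradiction, suppose $\|u_n\|\to\infty$ and set $v_n=u_n/\|u_n\|$. Up to a subsequence, $v_n\rightharpoonup v$ in $X$, and by Lemma \ref{embd-X}, $v_n\to v$ strongly in $L^{\gamma(\cdot)}(\Om)$ for every admissible subcritical $\gamma$. Choose $t_n\in[0,1]$ maximising $t\mapsto \mathcal{I}(t u_n)$ on $[0,1]$. For any large $M$, set $w_n=(p_2^+ M)^{1/p_1^-}v_n$; for $n$ large, $w_n=\tau_n u_n$ with $\tau_n\in(0,1)$, so $\mathcal{I}(t_n u_n)\ge\mathcal{I}(w_n)$. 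Lemma \ref{norm-modular}$(iii)$ gives $\rho(w_n)\ge p_2^+ M$, while \eqref{g1}, together with the strong $L^{p_2^+}(\Om)$ and $L^{r(\cdot)}(\Om)$ convergence of the bounded sequence $(w_n)$, ensures that $\int_\Om F(x,w_n)\,dx$ is uniformly bounded. Hence $\mathcal{I}(w_n)\ge p_2^+ M-C$, and letting $M\to\infty$ yields $\mathcal{I}(t_n u_n)\to\infty$. In particular, $t_n\in(0,1)$ eventually (since $\mathcal{I}(0)=0$ and $\mathcal{I}(u_n)\to c$), so the optimality condition gives $\langle \mathcal{I}'(t_n u_n),t_n u_n\rangle=0$.

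To close the contradiction, I use the decomposition
\[
\mathcal{I}(u)-\frac{1}{p_2^+}\langle\mathcal{I}'(u),u\rangle \;=\; \mathcal{M}(u)+\frac{1}{p_2^+}\int_\Om \mathcal{F}(x,u)\,dx,
\]
where $\mathcal{M}(u)$ is the sum of weighted modular integrals with nonnegative weights $\tfrac{1}{p_i(x,y)}-\tfrac{1}{p_2^+}$ and $\tfrac{1}{\overline{p}_i(x)}-\tfrac{1}{p_2^+}$ (both $\ge 0$ since $p_i\le p_2^+$). Because each integrand of $\mathcal{M}$ carries a factor $t^{p_i(x,y)}$ or $t^{\overline{p}_i(x)}\le 1$ under the rescaling $u\mapsto tu$ with $t\in[0,1]$, the scaling monotonicity $\mathcal{M}(t u)\le \mathcal{M}(u)$ holds. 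Combining this with $(f_4)$, which yields $\int_\Om \mathcal{F}(x,t_n u_n)\,dx\le \int_\Om\mathcal{F}(x,u_n)\,dx+\|b\|_{L^1(\Om)}$, and the Cerami data $\mathcal{I}(u_n)\to c$, $\langle\mathcal{I}'(u_n),u_n\rangle\to 0$, I obtain
\[
\mathcal{I}(t_n u_n)\le \mathcal{I}(u_n)-\frac{1}{p_2^+}\langle\mathcal{I}'(u_n),u_n\rangle + \frac{\|b\|_{L^1(\Om)}}{p_2^+}\longrightarrow c+\frac{\|b\|_{L^1(\Om)}}{p_2^+},
\]
a bounded quantity, contradicting $\mathcal{I}(t_n u_n)\to\infty$. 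Therefore $(u_n)$ is bounded.

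Once boundedness is secured, by reflexivity $u_n\rightharpoonup u$ in $X$ along a subsequence, and Lemma \ref{embd-X} gives $u_n\to u$ strongly in $L^{r(\cdot)}(\Om)$. From the Cerami condition, $\langle\mathcal{I}'(u_n),u_n-u\rangle\to 0$; the nonlinear term $\int_\Om f(x,u_n)(u_n-u)\,dx$ vanishes in the limit by $(f_1)$, the H\"{o}lder inequality, and the $L^{r(\cdot)}$-convergence, so $\langle\rho'(u_n),u_n-u\rangle\to 0$. The $(S_+)$-property (Lemma \ref{s+}$(iii)$) then forces $u_n\to u$ strongly in $X$. The principal obstacle is the boundedness stage: the non-homogeneity of $\rho$ and the competing variable exponents $p_1<p_2$ preclude a direct coercivity argument, and the correct comparison between $\mathcal{I}(t_n u_n)$ and $\mathcal{I}(u_n)$ is obtained only by exploiting simultaneously the scaling monotonicity of $\mathcal{M}$ and the essential monotonicity of $\mathcal{F}$ supplied by $(f_4)$.
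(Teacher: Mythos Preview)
Your scheme matches the paper's in spirit (Jeanjean's monotonicity trick for boundedness, then $(S_+)$ for strong convergence), and your decomposition via $\mathcal{M}(u)$ together with the scaling monotonicity $\mathcal{M}(tu)\le\mathcal{M}(u)$ and $(f_4)$ is a clean way to bound $\mathcal{I}(t_n u_n)$ from above; the final $(S_+)$ step is also correct. However, there is a genuine gap in the step where you conclude $\mathcal{I}(t_n u_n)\to\infty$.

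You assert that, for $w_n=(p_2^+M)^{1/p_1^-}v_n$, the integral $\int_\Om F(x,w_n)\,dx$ is ``uniformly bounded'' and hence $\mathcal{I}(w_n)\ge p_2^+M-C$ with $C$ independent of $M$. This is only true if the weak limit $v$ of $v_n$ is \emph{zero}, which you never establish. For fixed $M$ the bound on $\int_\Om F(x,w_n)$ depends on $\kappa=(p_2^+M)^{1/p_1^-}$: if $v\neq 0$ on a set of positive measure, then by $(f_2)$ one has $\int_\Om F(x,\kappa v)\,dx/\kappa^{p_2^+}\to+\infty$ as $\kappa\to\infty$, and since $\kappa^{p_2^+}=(p_2^+M)^{p_2^+/p_1^-}$ grows strictly faster than $M$ (because $p_2^+>p_1^-$), the $F$-term swamps the modular lower bound $\rho(w_n)\ge p_2^+M$ and your inequality fails. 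The paper handles this by a separate dichotomy argument \emph{before} invoking the maximiser $t_n$: assuming $v\neq 0$, one uses $(f_2)$ and Fatou's lemma to show $\int_\Om F(x,u_n)\,dx/\|u_n\|^{p_2^+}\to+\infty$, which contradicts $\mathcal{I}(u_n)\le C$ combined with $\rho(u_n)\le\|u_n\|^{p_2^+}$. Only after forcing $v=0$ does one get $\int_\Om F(x,\kappa v_n)\,dx\to 0$, and then your estimate $\mathcal{I}(w_n)\ge \tfrac{1}{2}p_2^+M-o_n(1)$ goes through. You need to insert this ``$v=0$'' step; without it the boundedness argument is incomplete.
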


\begin{proof}
  In this proof the value of the constant $C$ changes from line to line. We consider a sequence $(u_n)_{n\geq1}\subset X$ such that
  \begin{equation}\label{C1}
    |\mathcal{I}(u_n)|\leq C\quad\text{for some}\quad C>0\quad\text{and for all}\ n\geq1,
  \end{equation}
  \begin{equation}\label{C2}
    (1+\|u_n\|)\mathcal{I}^{'}(u_n)\to 0\quad\text{in}\quad X^{*}\quad\text{as}\ n\to+\infty.
  \end{equation}
  From \eqref{C2}, we have
  \begin{equation}\label{C3}
    |\mathcal{H}(u_n,v)|\leq \frac{\epsilon_n\|v\|}{1+\|u_n\|},
  \end{equation}
  for all $v\in X$ with $\epsilon_n\to 0$.

  In \eqref{C3}, we choose $v=u_n\in X$ and obtain for all $n\in\mathbb N$
  \begin{align}\label{C4}
   &-\frac{1}{2}\delta^{p_1}_{\R^{2N}\setminus(\mathcal{C}\Omega)^2}(u_n)-\frac{1}{2}\delta^{p_2}_{\R^{2N}\setminus(\mathcal{C}\Omega)^2}(u_n)
   -\rho_\Omega^{p_1}(u_n) -\rho_\Omega^{p_2}(u_n)\nonumber\\
   &-\int_{C \Omega}\beta(x)|u_n|^{\overline{p}_1(x)}dx-\int_{C \Omega}\beta(x)|u_n|^{\overline{p}_2(x)}dx+\int_{\Omega}f(x,u_n(x))u_n(x)dx\nonumber\\
   &\leq \epsilon_n.
  \end{align}
  Also, by \eqref{C1} we have for all $n\in\mathbb N$,
  \begin{align}\label{C5}
    &\frac{1}{2p_1^+}\delta^{p_1}_{\R^{2N}\setminus(\mathcal{C}\Omega)^2}(u_n)+ \frac{1}{2p_2^+}\delta^{p_2}_{\R^{2N}\setminus(\mathcal{C}\Omega)^2}(u_n)+\frac{1}{p_1^+}\rho_\Omega^{p_1}(u_n)
    +\frac{1}{p_2^+}\rho_\Omega^{p_2}(u_n)\nonumber\\
    &+\frac{1}{p_1^+}\int_{C \Omega}\beta(x)|u_n|^{\overline{p}_1(x)}dx+\frac{1}{p_2^+}\int_{C \Omega}\beta(x)|u_n|^{\overline{p}_2(x)}dx\nonumber\\
    &-\int_{\Omega}F(x,u_n(x))dx\leq  C.
  \end{align}
  Adding relations \eqref{C4} and \eqref{C5}, we obtain
  \begin{equation}\label{C6}
     \int_{\Omega}\mathcal{F}(x,u_n(x))dx\leq C\quad\text{for some}\quad C>0\quad \text{and all}\ n\in\mathbb N.
  \end{equation}
  {\bf Claim:} The sequence $(u_n)_{n\geq1}\subset X$ is bounded.

  We argue by contradiction. Suppose that the claim is not true. We may assume that
  \begin{equation}\label{C7}
    \|u_n\|\to +\infty\quad\text{as}\quad n\to+\infty.
  \end{equation}
  We set $w_n:=\frac{u_n}{\|u_n\|}$ for all $n\in\mathbb N$. Then $\|w_n\|=1$, for all $n\in\mathbb N$. Using reflexivity of $X$ and Lemma \ref{embd-X}, up to a subsequence, still denoted by $(w_n)_{n\geq1}$, as $n\to+\infty,$ we get
  \begin{equation}\label{C8}
    w_n\rightharpoonup w\ \text{~weakly in}\ X\quad\text{and}\quad w_n\to w\ \text{~ strongly in}\ L^{\gamma(\cdot)}(\Omega),\; 1<\gamma(x)<{p_2}_s^*(x).
  \end{equation}

  We claim that $w =0$. Indeed, if not then the set $\widehat{\Omega}:=\{x\in\Om:w(x)\not=0\}$ has positive Lebesgue measure, i.e., $|\widehat{\Omega}|>0.$ Hence, $|u_n(x)|\to +\infty$ for a.e.  $x\in\widehat{\Omega}$ as $n\to+\infty$. On account of hypothesis $(f_2)$, for a.e. $x\in\widehat{\Omega} $ we have
\begin{align}\label{C0}
 \frac{F(x,u_n(x))}{\|u_n\|^{p_2^+}}=\frac{F(x,u_n(x))}{|u_n(x)|^{p_2^+}}|w_n(x)|^{p_2^+}\to+\infty\ \text{~as~} n\to+\infty.
\end{align}

  Then by Fatou's lemma, we obtain
  \begin{equation}\label{C9}
    \int_{\widehat{\Omega}}\frac{F(x,u_n(x))}{\|u_n\|^{p_2^+}}dx\to+\infty \text{~~as~} {n\to+\infty}.
  \end{equation}
 Hypotheses $(f_1)$-$(f_2)$ imply there exists $K> 0$ such that
  \begin{equation}\label{C10}
    \frac{F(x,t)}{|t|^{p_2^+}}\geq 1\quad \text{for a.e.}\ x\in\Omega,\ \text{all}\ |t|>K.
  \end{equation}
  By $(f_1),$ there exists a positive constant
 $\widehat{C}>0$ such that
 \begin{align}\label{C10'}
 |F(x,t)|\leq \widehat{C}, \text{ for all~} (x,t)\in \ol{\Om}\times[-K,K].
 \end{align}  Now from \eqref{C10} and \eqref{C10'}, we get
 \begin{align}\label{C10"}
 F(x,t)>C_0 \text{ ~for all~} (x,t)\in \ol{\Om}\times\R,
 \end{align} where $C_0\in\R$ is a constant. The above relation implies
 \begin{align*}
 \frac{F(x,u_n(x))-C_0}{\|u_n\|^{p_2^+}}\geq0 \text{~~ for all~} x\in \ol{\Om}, \text{ for all~} n\in\mathbb{N}.
 \end{align*}
  that is,
 \begin{align}\label{C10.0}
 \frac{F(x,u_n(x))}{|u_n(x)|^{p_2^+}}|w_n(x)|^{p_2^+}-\frac{C_0}{\|u_n\|^{p_2^+}}\geq0 \text{~~ for all~} x\in \ol{\Om}, \text{ for all~} n\in\mathbb{N}.
 \end{align}

  By \eqref{C1}, \eqref{C7}, \eqref{C9}, \eqref{C10.0}  and using the fact $\|w_n\|=1,$  Lemma \ref{modular} and Fatou's lemma, we have
  \begin{align}\label{C12}
 +\infty&= \left[\int_{\widehat{\Om}}\liminf_{n\to+\infty}\frac{F(x,u_n(x))|w_n(x)|^{p_2^+}}{|u_n(x)|^{p_2^+}}dx-\int_{\widehat{\Om}}\limsup_{n\to+\infty}\frac{C_0}{\|u_n\|^{p_2^+}}dx\right]\nonumber\\
  &=\int_{\widehat{\Om}}\liminf_{n\to+\infty}\left[\frac{F(x,u_n(x))|w_n(x)|^{p_2^+}}{|u_n(x)|^{p_2^+}}-\frac{C_0}{\|u_n\|^{p_2^+}}\right]dx\nonumber\\
  &\leq\liminf_{n\to+\infty}\int_{\widehat{\Om}}\left[\frac{F(x,u_n(x))|w_n(x)|^{p_2^+}}{|u_n(x)|^{p_2^+}}-\frac{C_0}{\|u_n\|^{p_2^+}}\right]dx\nonumber\\
  &\leq\liminf_{n\to+\infty}\int_{{\Om}}\left[\frac{F(x,u_n(x))|w_n(x)|^{p_2^+}}{|u_n(x)|^{p_2^+}}-\frac{C_0}{\|u_n\|^{p_2^+}}\right]dx\nonumber\\
  &=\left[\liminf_{n\to+\infty}\int_{{\Om}}\frac{F(x,u_n(x))|w_n(x)|^{p_2^+}}{|u_n(x)|^{p_2^+}}dx-\limsup_{n\to+\infty}\int_{{\Om}}\frac{C_0}{\|u_n\|^{p_2^+}}dx\right]\nonumber\\
   &=\liminf_{n\to+\infty}\int_{{\Om}}\frac{F(x,u_n(x))}{\|u_n\|^{p_2^+}}dx\nonumber\\
  &=\liminf_{n\to+\infty}\bigg[\frac{1}{2}\int_{\R^{2N}\setminus(\mathcal{C}\Omega)^2}
    \frac{1}{\|u_n\|^{p_2^+-p_1(x,y)}}\frac{|w_n(x)-w_n(y)|^{p_1(x,y)}}{p_1(x,y)|x-y|^{N+s(x,y)p_1(x,y)}}dxdy\nonumber\\
    &\quad+\frac{1}{2}\int_{\R^{2N}\setminus(\mathcal{C}\Omega)^2}
    \frac{1}{\|u_n\|^{p_2^+-p_2(x,y)}}\frac{|w_n(x)-w_n(y)|^{p_2(x,y)}}{p_2(x,y)|x-y|^{N+s(x,y)p_2(x,y)}}dxdy\nonumber\\
    &\quad+ \int_{\Omega}\frac{1}{\|u_n\|^{p_2^+ -\overline{p}_1(x)}}\frac{|w_n|^{\overline{p}_1(x)}}{\ol{p}_1(x)}dx
    + \int_{\Omega}\frac{1}{\|u_n\|^{p_2 ^+ -\overline{p}_2(x)}}\frac{|w_n|^{\overline{p}_2(x)}}{\ol{p}_2(x)}dx\nonumber\\
    &\quad+\int_{\mathcal{C}\Omega} \frac{\beta(x)|w_n|^{\overline{p}_1(x)}}{\|u_n\|^{p_2^+ -\overline{p}_1(x)}\ol{p}_1(x)}\,dx
    +\int_{\mathcal{C}\Omega} \frac{\beta(x)|w_n|^{\overline{p}_2(x)}}{\|u_n\|^{p^+ -\overline{p}_2(x)}\ol{p}_2(x)}dx-\frac{\mathcal{I}(u_n)}{\|u_n\|^{p^+}}\bigg]\nonumber\\
  &\leq\liminf_{n\to+\infty} \rho(w_n)=1.
  \end{align}
 Thus we arrive at a contradiction.
Hence, $w = 0$. Let $\mu\geq1$ and set $\kappa
 :=(2\mu)^{\frac{1}{p_2^-}}\geq1$ for all $n\in\mathbb N$. Evidently, from \eqref{C8} we have $$ w_n\to 0\quad\text{~ strongly in}\quad L^{\gamma(\cdot)}(\Omega),~ 1<\gamma(x)<{p_2}_s^*(x)$$ which combining with $(f_1)$-$(f_3)$ and Lebesgue dominated convergence theorem yields that
  \begin{align}\label{C13}
  \int_{\Omega}F(x,\kappa w_n)\,dx\to0\ \text{as}\ n\to+\infty.
  \end{align}
We can find $t_n\in[0,1]$ such that
\begin{equation}\label{C15}
  \mathcal{I}(t_n u_n)=\max_{0\leq t\leq 1}\mathcal{I}(tu_n).
\end{equation}
Because of \eqref{C7}, for sufficiently large $n\in\mathbb N,$ we have
\begin{equation}\label{C16}
  0<\frac{(2\mu)^{\frac{1}{p_2^-}}}{\|u_n\|}\leq1.
\end{equation}
Using \eqref{C13}, \eqref{C15} and  \eqref{C16} and recalling that $ \|w_n\|=1,$ for sufficiently large $n\in\mathbb N,$ it follows that
\begin{align*}
   \mathcal{I}(t_n u_n) \geq \mathcal{I}\left(\kappa \frac{u_n}{\|u_n\|}\right)&= \mathcal{I}(\kappa w_n)\\
&\geq (\kappa)^{p_2^-} \frac{1}{2}\rho(w_n)-\int_{\Omega}F(x,\kappa w_n)dx\\
&=2\mu. \frac{1}{2}\rho(w_n)-\int_{\Omega}F(x,\kappa w_n)dx\\
   &= \mu + o_n(1).
\end{align*}
Since $\mu > 0$ is arbitrary, we have
\begin{equation}\label{C17}
  \mathcal{I}(t_n u_n)\to+\infty\quad\text{as}\quad n\to+\infty.
\end{equation}
From the assumption$f(x,0)=0$ and \eqref{C1} we know that
\begin{equation}\label{C18}
  \mathcal{I}(0)=0\ \text{and}\ \mathcal{I}(u_n)\leq C\ \text{for all}\ n\in\mathbb N.
\end{equation}
By \eqref{C17} and \eqref{C18}, we can infer that, for $n\in\mathbb N$ large,
\begin{equation}\label{C19}
  t_n\in(0,1).
\end{equation}
From \eqref{C15} and \eqref{C19}, we can see that for all $n\in\mathbb N$ sufficiently large,
\begin{align}\label{C20}
  0=t_n\frac{d}{dt}\mathcal{I}(t u_n)|_{t=t_n}=\langle \mathcal{I}^{'}(t_n u_n),t_n u_n\rangle,
\end{align}
so,
\begin{align}\label{C21}
  &\frac{1}{2}\delta^{p_1}_{\R^{2N}\setminus(\mathcal{C}\Omega)^2}(t_n u_n) + \rho^{p_1}_\Omega(t_n u_n)
   +\int_{\mathcal{C}\Omega} \beta(x)|t_n u_n|^{\overline{p}_1(x)}\,dx\nonumber\\
   &+\frac{1}{2}\delta^{p_2}_{\R^{2N}\setminus(\mathcal{C}\Omega)^2}(t_n u_n) + \rho^{p_2}_\Omega(t_n u_n)
   +\int_{\mathcal{C}\Omega} \beta(x)|t_n u_n|^{\overline{p}_2(x)}\,dx
    -\int_{\Omega}f(x,t_n u_n)t_n u_n\,dx =0.
\end{align}
From hypothesis $(f_4)$, we obtain for all $n\in\mathbb N,$
$$
\mathcal{F}(x,t_n u_n)\leq\mathcal{F}(x, u_n)+b(x)\ \text{for a.e}\ x\in \Omega,
$$
that is,
\begin{equation}\label{C22}
  f(x,t_n u_n)(t_n u_n)\leq\mathcal{F}(x, u_n)+b(x)+p_2^+ F(x,t_n u_n)\ \text{for a.e}\ x\in \Omega.
\end{equation}

Combining \eqref{C21} and  \eqref{C22}, we deduce
{\begin{align*}&\frac{1}{2}\delta^{p_1}_{\R^{2N}\setminus(\mathcal{C}\Omega)^2}(t_n u_n) + \rho^{p_1}_\Omega(t_n u_n)
	+\int_{\mathcal{C}\Omega} \beta(x)|t_n u_n|^{\overline{p}_1(x)}\,dx\\
   &+\frac{1}{2}\delta^{p_2}_{\R^{2N}\setminus(\mathcal{C}\Omega)^2}(t_n u_n) + \rho^{p_2}_\Omega(t_n u_n)+
   \int_{\mathcal{C}\Omega} \beta(x)|t_n u_n|^{\overline{p}_2(x)}\,dx-p_2^+\int_\Omega F(x,t_n u_n)dx\\
   & \leq \int_\Omega \mathcal{F}(x,u_n)dx+\|b\|_{L^1(\Om)}\ \text{for all}\ n\in\mathbb N,
\end{align*}}
and hence by \eqref{C6}, we get
\begin{equation}\label{C23}
  p_2^+\mathcal{I}(t_n u_n)\leq C\ \text{for all}\ n\in\mathbb N.
\end{equation}
We compare \eqref{C17} and \eqref{C23} and arrive at a contradiction. Thus the claim follows.

On account of this claim, we may assume that
\begin{equation}\label{C24}
    u_n\rightharpoonup u\ \text{~ weakly in}\ X\quad\text{and}\quad u_n\to u\ \text{~ strongly in}\ L^{\gamma(\cdot)}(\Omega),\ 1<\gamma(x)<{p_2}_s^*(x).
  \end{equation}
  We show in what follows that
  $$
  u_n\to u\ \text{in}\ X.
  $$
  Using \eqref{C24}, we have
\begin{align}\label{C25}
  o_n(1)=\langle\mathcal{I}^{'}(u_n),u_n-u\rangle\geq\frac 12\langle\rho'(u_n), u_n-u\rangle-\int_\Om f(x,u_n)(u_n-u)dx.
\end{align}
  Now by $(f_1),$ H\"older inequality, \eqref{C24},  boundedness of $(u_n)_n$ and Lemma \ref{lemA1}, we obtain
  \begin{align}\label{C26}
  &\left|\int_{\Om} f(x,u_n)(u_n-u)dx\right|\nonumber\\&\leq\|a\|_{L^\infty(\Om)}\left[\int_\Om |u_n-u|.1\,dx+\int_\Om|v_n|^{r(x)-1}|v_n-v_0|dx\right]\nonumber\\
  &\leq\|a\|_{L^\infty(\Om)}\left[ \|u_n-u\|_{L^{r(\cdot)}(\Om)} \left(1+|\Om|\right)^{r'^+}+ \|v_n-v_0\|_{L^{r(\cdot)}(\Om)}
  \||u_n|^{r(\cdot)-1}\|_{L^{r'(\cdot)}(\Om)}\right]\nonumber\\
  &\leq C \left[ \|u_n-u\|_{L^{r(\cdot)}(\Om)}+ \|u_n-u\|_{L^{r(\cdot)}(\Om)}
  \left(\|u_n\|_{L^{r(\cdot)}(\Om)}^{r^+-1}+\|u_n\|_{L^{r(\cdot)}(\Om)}^{r^--1}\right)\right]\nonumber\\
  &\quad\to 0\text{\;\; as\;} n\to+\infty.\nonumber\\
  \end{align}
  Hence, combining \eqref{C25} and \eqref{C26} and using the $(S_+)$ property of $\rho'$ (see Lemma \ref{s+}),
  	we have $u_n\to u$ strongly in $X$ as $n\to+\infty.$, which
shows that the $(C)_c$-condition is satisfied. The proof is now complete.
\end{proof}

\section{Proof of Theorem \ref{fount-sol}}

To prove the Theorem \ref{fount-sol} we need the Fountain theorem of Bartsch \cite[Theorem 2.5]{bartsch}; (see also \cite[Theorem 3.6]{willem}).
We recall next lemma from \cite{fabian}.

\begin{lemma}\label{ftlem}
	Let $E$ be a reflexive and separable Banach space. Then there are $\{e_n\}\subset E$ and $\{g_n^*\}\subset E^*$ such that{$$E=\overline{span\{e_n:n=1,2,3..\}}, ~~E^*=\overline{span\{g_n^*:n=1,2,3..\}},$$} and { \begin{equation*}
		\langle g_i^*,e_j\rangle=
		\left\{ \begin{array}{rl}
		& 1  \text{~~~~if~~~} i=j\\
		& 0 \text{~~~~if~~~} i\not=j.
		\end{array}
		\right.
		\end{equation*}}
\end{lemma}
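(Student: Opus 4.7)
This is the classical existence of a biorthogonal (Markushevich) system in a separable reflexive Banach space, and my plan is to build $\{e_n\}$ and $\{g_n^*\}$ inductively, alternating a ``primal density'' step and a ``dual density'' step while maintaining biorthogonality at every stage.

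The preparatory observation is that reflexivity combined with separability of $E$ forces $E^*$ to be separable as well: since $E\cong E^{**}$ is separable, the bidual of $E^*$ is separable, and a normed space whose dual is separable must itself be separable. Fix countable dense subsets $\{x_n\}_{n\geq 1}\subset E\setminus\{0\}$ and $\{y_n^*\}_{n\geq 1}\subset E^*\setminus\{0\}$. The inductive construction will guarantee, at stage $n=2k-1$, that $x_k$ lies in $\operatorname{span}\{e_1,\ldots,e_n\}$, and at stage $n=2k$, that $y_k^*$ lies in $\operatorname{span}\{g_1^*,\ldots,g_n^*\}$, while preserving $\langle g_i^*,e_j\rangle=\delta_{ij}$ for all $i,j\leq n$.

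At an odd stage $n=2k-1$, let $F_{n-1}:=\operatorname{span}\{e_1,\ldots,e_{n-1}\}$ and consider the finite-rank projection $P_{n-1}(x):=\sum_{i=1}^{n-1}\langle g_i^*,x\rangle e_i$ onto $F_{n-1}$ afforded by biorthogonality. If $x_k\notin F_{n-1}$, set $e_n:=x_k-P_{n-1}(x_k)$, which is nonzero and annihilated by each $g_i^*$ with $i<n$; Hahn-Banach then provides $g_n^*\in E^*$ with $\langle g_n^*,e_n\rangle=1$ and $g_n^*|_{F_{n-1}}=0$, closing this step. The degenerate case $x_k\in F_{n-1}$ is handled by substituting any vector outside $F_{n-1}$ (available since $E$ is infinite dimensional).

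At an even stage $n=2k$, with $G_{n-1}:=\operatorname{span}\{g_1^*,\ldots,g_{n-1}^*\}$, one symmetrically sets $g_n^*:=y_k^*-\sum_{i=1}^{n-1}\langle y_k^*,e_i\rangle g_i^*$, which annihilates every $e_i$ with $i<n$ and does not lie in $G_{n-1}$ (assuming $y_k^*\notin G_{n-1}$, the other case being trivial). The one delicate point is producing a companion $e_n$ satisfying $\langle g_n^*,e_n\rangle=1$ and $\langle g_i^*,e_n\rangle=0$ for $i<n$. For this, appeal to the elementary algebraic fact that a functional vanishing on $\bigcap_{i<n}\ker g_i^*$ must lie in $G_{n-1}$; the contrapositive shows $g_n^*$ is not identically zero on $K_{n-1}:=\bigcap_{i<n}\ker g_i^*$, and any $u\in K_{n-1}$ with $\langle g_n^*,u\rangle\neq 0$ yields $e_n:=u/\langle g_n^*,u\rangle$ with the required properties. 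Density of $\{e_n\}$ and $\{g_n^*\}$ in their respective spaces then follows from density of $\{x_n\}\subset E$ and $\{y_n^*\}\subset E^*$, since every $x_k$ (resp.\ $y_k^*$) was incorporated into the span at stage $2k-1$ (resp.\ $2k$). The principal obstacle is precisely the even-numbered step, where one must simultaneously respect the previously established biorthogonality relations and still extend the dual span; this is where the algebraic kernel-intersection lemma, together with the reflexivity that ensures separability of $E^*$, plays the decisive role.
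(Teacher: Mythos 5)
Your proof is correct. The paper offers no argument of its own here --- it simply cites the result from Fabian et al. --- and what you have written is precisely the standard construction from that reference: deduce separability of $E^*$ from reflexivity, then build a Markushevich biorthogonal system by alternating primal and dual steps, using the finite-rank projection $P_{n-1}$ and Hahn--Banach on the odd steps and the kernel-intersection lemma on the even steps. All the delicate points (nondegeneracy of $g_n^*$ on $\bigcap_{i<n}\ker g_i^*$, preservation of $\langle g_i^*,e_j\rangle=\delta_{ij}$, and the role of reflexivity in making $\overline{\operatorname{span}}\{g_n^*\}=E^*$ rather than merely total) are handled correctly.
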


Let us denote
\begin{align}\label{not}
	E_n=span\{e_n\},~~~ X_k=\bigoplus_{n=1}^k E_n \text{ ~~~and } Y_k=\overline{\bigoplus_{n=k}^\infty E_n}.
\end{align}
Now we recall the following Fountain theorem from  \cite{alves}:

\begin{thm}[Fountain theorem]\label{ft}
	Assume that $\Phi\in C^1(E,\RR)$ satisfies the Cerami condition $(C)_c$ for all $c\in\mathbb R$ and $\Phi(-u)=\Phi(u).$ If for each sufficiently large $k\in\mathbb N,$ there exists $\varrho_k>\delta_k>0$ such that
	\begin{itemize}
		\item[${(\mathcal A_1)}$] $b_k:=\inf\{\Phi(u):u\in Y_k,~\|u\|_E=\delta_k\}\to+\infty,$ as $k\to+\infty,$
		\item[$\rm(\mathcal A_2)$] $a_k:=\max\{\Phi(u):u\in X_k,~\|u\|_E=\varrho_k\}\leq0.$
	\end{itemize}
	Then $\Phi$ has a sequence of critical points $(u_k)_k$ such that $\Phi(u_k)\to+\infty.$
\end{thm}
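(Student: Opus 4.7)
I follow the standard minimax scheme attributed to Bartsch. For each $k$ large enough that $(\mathcal{A}_1)$ and $(\mathcal{A}_2)$ hold, set
\begin{equation*}
B_k := \{u\in X_k : \|u\|_E \leq \varrho_k\}, \quad \Gamma_k := \{h\in C(B_k,E) : h \text{ odd and } h|_{\partial B_k} = \mathrm{id}_{\partial B_k}\},
\end{equation*}
and define the minimax level
\begin{equation*}
c_k := \inf_{h\in\Gamma_k}\max_{u\in B_k}\Phi(h(u)).
\end{equation*}
Since $\mathrm{id}_{B_k}\in\Gamma_k$ and $B_k$ is compact in the finite-dimensional space $X_k$, one has $c_k \leq \max_{B_k}\Phi < \infty$. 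The goal is to show that $c_k\geq b_k$ (whence $c_k\to+\infty$) and that each $c_k$ is a critical value of $\Phi$.

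\textbf{Linking/intersection step.} The key topological claim is that for every $h\in\Gamma_k$,
\begin{equation*}
h(B_k)\cap\{u\in Y_k : \|u\|_E = \delta_k\}\neq\emptyset,
\end{equation*}
which together with $(\mathcal{A}_1)$ immediately yields $c_k\geq b_k\to+\infty$. To prove it, set $O:=\{u\in B_k : \|h(u)\|_E < \delta_k\}$. By oddness $h(0)=0$, so $0\in O$; on $\partial B_k$ we have $\|h(u)\|_E=\varrho_k>\delta_k$, so $\overline{O}\cap\partial B_k=\emptyset$. Hence $O$ is a bounded open symmetric neighborhood of $0$ in $X_k$, and its boundary $\partial O$ is homeomorphic via radial projection to the sphere $S^{k-1}\subset X_k\cong\mathbb{R}^k$. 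Using the topological decomposition $E = X_{k-1}\oplus Y_k$, let $P\colon E\to X_{k-1}$ be the associated continuous projection. Then $P\circ h\colon\partial O\to X_{k-1}\cong\mathbb{R}^{k-1}$ is odd and continuous. By the Borsuk--Ulam theorem it has a zero $u_\ast\in\partial O$, and by construction $h(u_\ast)\in Y_k$ with $\|h(u_\ast)\|_E = \delta_k$.

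\textbf{Criticality via Cerami deformation.} Suppose, for contradiction, that $c_k$ is a regular value of $\Phi$. Since $\Phi$ is even and satisfies $(C)_{c_k}$, the equivariant deformation lemma available under the Cerami condition (as established in \cite{KP}) produces $\varepsilon\in(0,c_k/2)$ and an odd homeomorphism $\eta\colon E\to E$ such that $\eta=\mathrm{id}$ on $\{\Phi\leq 0\}$ and $\eta(\{\Phi\leq c_k+\varepsilon\})\subset\{\Phi\leq c_k-\varepsilon\}$; here $c_k>0$ for $k$ large thanks to $c_k\geq b_k\to+\infty$. Choose $h\in\Gamma_k$ with $\max_{B_k}\Phi\circ h\leq c_k+\varepsilon$. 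By $(\mathcal{A}_2)$, $\Phi\leq 0$ on $\partial B_k$, so $\eta\circ h$ coincides with $\mathrm{id}$ on $\partial B_k$ and remains odd, hence $\eta\circ h\in\Gamma_k$. But $\max_{B_k}\Phi\circ(\eta\circ h)\leq c_k-\varepsilon$, contradicting the definition of $c_k$. Thus each $c_k$ is a critical value, yielding a critical point $u_k$ with $\Phi(u_k)=c_k\to+\infty$.

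\textbf{Main obstacle.} The delicate part is extracting the equivariant, sublevel-fixing deformation $\eta$ from the \emph{Cerami} (rather than Palais--Smale) condition: one must ensure that $\eta$ can be chosen odd and equal to the identity on $\{\Phi\leq 0\}$, which is precisely what makes $\eta\circ h$ admissible in $\Gamma_k$, and condition $(\mathcal{A}_2)$ is exactly what synchronizes the sublevel $\{\Phi\leq 0\}$ with $\partial B_k$. The intersection/linking step, while conceptually the geometric heart of the argument, is reduced to a clean Borsuk--Ulam application once the auxiliary symmetric open set $O$ and the projection $P$ onto $X_{k-1}$ are introduced.
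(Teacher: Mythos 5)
The paper does not prove this theorem at all: it is recalled verbatim from the literature (Alves; originally Bartsch, cf.\ Willem), and the only paper-side input is the remark that the deformation lemma survives when the Palais--Smale condition is weakened to the Cerami condition, citing \cite{KP}. So there is no in-paper argument to compare against; your proposal supplies the standard Bartsch--Willem minimax proof, and in outline it is correct: the minimax class $\Gamma_k$, the intersection lemma giving $c_k\geq b_k\to+\infty$, and the contradiction via an odd, sublevel-preserving deformation are exactly the canonical route, and your use of the biorthogonal functionals $g_j^*$ to get the continuous projection $P$ onto $X_{k-1}$ along $Y_k$ is legitimate (note $X_{k-1}+Y_k$ is closed and dense, hence all of $E$, and $\ker P=Y_k$).

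Two points deserve tightening. First, the claim that $\partial O$ is ``homeomorphic via radial projection to the sphere $S^{k-1}$'' is false in general: $O=\{u\in B_k:\|h(u)\|_E<\delta_k\}$ is a bounded open symmetric neighborhood of $0$, but it need not be star-shaped and its boundary can be topologically complicated. Fortunately the conclusion you need does not require this: Borsuk's antipodal theorem (via the degree of odd maps) asserts that an odd continuous map from the boundary of any bounded open symmetric neighborhood of $0$ in $\mathbb{R}^k$ into a proper subspace must vanish somewhere, so the step goes through once the incorrect homeomorphism claim is deleted. Second, the existence of an \emph{odd} homeomorphism $\eta$ with $\eta=\mathrm{id}$ on $\{\Phi\leq 0\}$ and $\eta(\{\Phi\leq c_k+\varepsilon\})\subset\{\Phi\leq c_k-\varepsilon\}$ under only $(C)_{c_k}$ is the genuinely nontrivial ingredient; you correctly isolate it and attribute it to the Cerami-type deformation lemma, which is precisely the black box the paper itself invokes, so this is an acceptable level of detail here, though in a self-contained write-up one should either prove it or give a precise reference to an equivariant quantitative version.
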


\begin{proof}[Proof of Theorem \ref{fount-sol}:]  Define $X_k$ and $Y_k$ as in \eqref{not} for the reflexive, separable Banach space $X$. Now  $\mathcal{I}$ is even and satisfies Cerami condition $(C)_c$  for all $c\in \RR$ (see Lemma \ref{bounded}). So now we will  show that the conditions $(\mathcal A_1)$-$(\mathcal A_2)$ hold for our problem.
\begin{itemize}
	\item [$(\mathcal A_1):$]
	For large $k\in\mathbb N,$   set
	\begin{align}\label{al}
	\alpha_k=\sup_{u\in Y_k,~\|u\|=1}\|u\|_{L^{\gamma(\cdot)}(\Om)},\end{align} where $\gamma\in C_+(\ol\Om)$ such that for all $x\in\ol\Om,$ $1< \gamma(x)<p_s^*(x).$  So,
	\begin{align}\label{ft1}
	\lim_{k\to+\infty}\alpha_k= 0.
	\end{align}
Supposing to the contrary, there exist $\epsilon'>0, k_0\geq0$ and a sequence $(u_k)_k$ in $Y_k$ such that
$$\|u_k\|=1\text{ and}~~\|u_k\|_{L^{\gamma(\cdot)}(\Om)}\geq \epsilon' $$
	for all $k\geq k_0.$
	Since $(u_k)_k$ is bounded in $X,$ there exists $u_0\in E$ such that up to a subsequence, still denoted by $(u_k)_k,$ we have $u_k\rightharpoonup u_0$ weakly in $E$ as $k\to+\infty$ and
	$$\langle g_j^*,u_0\rangle=\lim_{k\to+\infty}\langle g_j^*,u_k\rangle=0$$  for $j=1,2,3,\cdots.$ Thus we have $u=0.$ In addition, using Theorem \ref{embd-X} we obtain
	$$\epsilon'\leq\lim_{k\to+\infty}\|u_k\|_{L^{\gamma(\cdot)}(\Om)}=\|u_0\|_{L^{\gamma(\cdot)}(\Om)}=0,$$  a contradiction. Hence, \eqref{ft1} holds true.  Let $u\in Y_k$ with  $\|u\|>1.$  Note that  \eqref{ft1} implies $\alpha_k<1$ for large $k\in\mathbb N.$ Thus using  Lemma \ref{lemA1}, Lemma \ref{norm-modular} and  \eqref{al} and \eqref{g1} with $\e=1$ for  $k\in\mathbb N$ large enough,   we get
\begin{align}\label{ft4}
 \mathcal {I}(u)
	&\geq \frac{1}{2}\rho(u)-\left[\int_{\Omega}|u|^{p_2^+}\,dx- C(1)\int_{\Omega}|u|^{r(x)}\,dx\right] \nonumber\\
		&\geq \frac{1}{2}\|u\|^{p_1^-}
		-\|u\|^{p_2^{+}}_{L^{p_2^+}(\Omega)}- C(1)\left\{\|u\|^{r^{-}}_{L^{r(\cdot)}(\Omega)}+\|u\|^{r^{+}}_{L^{r(\cdot)}(\Omega)}\right\} \nonumber\\
		&\geq\frac{1}{2}\|u\|^{p_1^-}
		- \alpha_k^{p_2^+} C_1\|u\|^{p_2^{+}}- C_2\{\alpha_k^{r^-}\|u\|^{r^{-}}+\alpha_k^{r^+}\|u\|^{r^{+}}\}\nonumber\\
		&\geq\frac{1}{2}\|u\|^{p_1^-}- C \alpha_k\|u\|^{r^{+}},
		\end{align}
where $C,C_1,C_2  $ are some positive constants.

Define the  function $\mathcal G:\RR\to \RR,$
	$$\mathcal G(t)= \frac{1}{2}t^{  p_1^-}- C \alpha_k t^{r^+}.$$  Then it can be derived by a simple computation that  $G$ attains its maximum at $$\delta_k={\left(\frac{ p_1^-}{2r^+{ C }\alpha_k  }\right)^{1/(r^+- p_1^-)}}$$ and the maximum value of $\mathcal G$ is
	{\begin{align}
		\mathcal G(\delta_k)&=\frac{1}{ 2}\left(\frac{ p_1^-}{2r^+{ C }\alpha_k  }\right)^{{p_1^-}/(r^+-p_1^-)}-{{ C }}\alpha_k\left(\frac{ p_1^-}{2r^+{ C }\alpha_k  }\right)^{{r^+}/(r^+- p_1^-)}\nonumber\\
		&=\left(\frac{1}{ 2}\right)^{r^+/(r^+- p_1^-)}\left(\frac{1}{{{ C }} \alpha_k}\right)^{{ p_1^-}/(r^+- p_1^-)}\left(\frac{ p_1^-}{r^+}\right)^{\theta p_1^-/(r^+-p_1^-)}\bigg(1-\frac{ p_1^-}{r^+}\bigg)\nonumber.
		\end{align}}
	Since $ p_1^-<r^+$ and $\alpha_k \to 0$ as $k \to +\infty,$ we obtain
	\begin{equation}\label{ft5}
		\mathcal{G}(\delta_k)\to +\infty\ \text{as}\ k\to +\infty.
	\end{equation}
	
Again,   \eqref{ft1} infers $\delta_k\to+\infty$ as $k\to +\infty.$ Thus for $u\in Y_k$ with $\|u\|=\delta_k,$  taking into account \eqref{ft4} and \eqref{ft5}, it follows that as $k\to +\infty$
	$$b_k=\inf_{u\in Y_k,\|u\|=\delta_k} \mathcal{I}(u)\to +\infty.$$
	\item[$(\mathcal A_2):$]  Let us assume that the  assertion $(\mathcal {A}_2)$ does not hold  for some $k.$ So there exists a sequence $(u_n)_n\subset X_k$ such that
	\begin{align}\label{ft6}
	\|u_n\|\to +\infty\ \text{and}\ \mathcal{I}(u_n)\geq 0.
	\end{align}
	Let us take $w_n:=\frac{u_n}{\|u_n\|},$ then $w_n\in X$ and $\|w_n\|=1.$ Since $X_k$ is of finite dimension, there exists $w \in X_k\setminus\{0\}$ such that up to a subsequence, still denoted by $(w_n)_n,$ $w_n\to w$  strongly and $w_n(x)\to w(x)$
	a.e. $x\in\Om$ as $n \to +\infty. $
	If $w(x)\not=0$ then $|u_n(x)|\to+\infty$ as $n \to +\infty.$
	Similar to \eqref{C0}, it follows that for each $x\in\Om$,
		\begin{align}\label{ft6.0}
		\frac{F(x,u_n(x))}{|u_n(x)|^{p_2^+}}|w_n(x)|^{p_2^+}\to+\infty .
		\end{align}
		Hence, using \eqref{ft6} and applying Fatou's lemma, we have
		\begin{equation}\label{ft7}
			\frac{1}{\|u_n\|^{p_2^+}}\int_{\Om}F(x,u_n)dx=\int_{\Om}\frac{F(x,u_n)}{|u_n(x)|^{p_2^+}}|w_n(x)|^{p_2^+}dx\to+\infty\ \text{as}\  
			\end{equation}
	Since  $\|u_n\|>1$ for large $n\in\mathbb{N}$, from  Proposition \ref{norm-modular} and \eqref{ft7}, we obtain as $n\to+\infty$
		\begin{align*}
		\mathcal{I}(u_n)&\leq\rho(u_n)-\int_\Om F(x,u_n)\,dx\\
	&	\leq  \|u_n\|^{ p_2^+}-\int_\Om F(x,u_n)\,dx\\&=\bigg(1-\frac{1}{\|u_n\|^{ p_2^+}}\int_\Om F(x,u_n)\;dx\bigg)\|u_n\|^{ p_2^+}\to-\infty,
		\end{align*}
	\noindent a contradiction to \eqref{ft6}. Therefore, for sufficiently large $k\in\mathbb N,$  we can get $\varrho_k>\delta_k>0$  such that $(\mathcal {A}_2)$ holds for $u\in X_k$ with $\|u\|=\varrho_k$.
	\end{itemize}
\end{proof}

\section{Proof of Theorem \ref{dual-fount-sol}}

For proving Theorem \ref{dual-fount-sol}, we first recall the   Dual fountain theorem due to Bartsch and Willem (see \cite[Theorem 3.18]{willem}). Considering  Lemma \ref{ftlem} and using the reflexivity and separability  of the Banach space $X$   we can define $X_k$ and $Y_k$ appropriately.

\begin{definition}
For $c\in \mathbb R,$ we say that $\mathcal{I}$ satisfies the $(C)_{c}^{*}$ condition (with respect to $Y_{k}$) if any sequence $(u_k)_k$ in $X$ with $u_{k}\in Y_{k}$ such that
	$$
\mathcal{I}(u_{k})\to c \text {~~and~~} \|\mathcal{I}'_{|_{Y_{k}}}(u_{k})\|_{E^*}(1+\|u_{k}\|)\to 0, \text{~~as~~~} k\to+\infty
$$
 contains a subsequence converging to a critical point of $\mathcal{I},$ where $X^*$ is the dual of $X.$
\end{definition}

\begin{thm}[Dual fountain Theorem]\label{dual}
	Let  $\Phi\in C^1(E,\RR)$  satisfy $\Phi(-u)=\Phi(u).$ If for each $k\geq k_0$ there exist $\varrho_{k}>\delta_{k}>0$ such that
	\begin{itemize}
		\item[$(\mathcal B_{1})$] $a_{k}=\inf\{\Phi(u):u\in Z_{k},\,\,\|u\|_{E}=\varrho_{k}\}\geq 0;$
		\item[$(\mathcal B_{2})$] $b_{k}=\sup\{\Phi(u):u\in Y_{k},\,\,\|u\|_{E}=\delta_{k}\}< 0;$
		\item[$(\mathcal B_{3})$] $d_{k}=\inf\{\Phi(u):u\in Z_{k},\,\,\|u\|_{E}\leq \varrho_{k}\}\to 0$ as $k\to+\infty;$
		\item[$(\mathcal B_{4})$] $\Phi$ satisfies the $(C)_{c}^*$ condition for every $c\in [d_{k_0},0[.$
	\end{itemize}
	Then $\Phi$ has a sequence of negative critical values converging to $0$.
\end{thm}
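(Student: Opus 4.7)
The plan is to follow the standard Bartsch--Willem strategy for the Dual Fountain Theorem, adapted to the Cerami setting. The proof rests on three ingredients: (i) Krasnoselskii genus theory to construct a minimax value on suitable symmetric families, (ii) an intersection lemma of Borsuk--Ulam type to sandwich the minimax value between $d_k$ and $b_k$, and (iii) a Cerami deformation lemma derived from $(\mathcal{B}_4)$ to certify that the minimax value is a critical value.

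First I would set up the minimax. For each $k \geq k_0$, set $B_k = \{u \in Y_k : \|u\|_E \leq \delta_k\}$, whose boundary $\partial B_k$ is a finite-dimensional sphere of Krasnoselskii genus exactly $\dim Y_k$. Define the class of admissible deformations
$$\Lambda_k = \bigl\{\,h \in C(B_k, E) \;:\; h \text{ is odd and } h|_{\partial B_k} = \mathrm{id}\,\bigr\},$$
and the minimax level
$$c_k = \inf_{h \in \Lambda_k}\; \sup_{u \in B_k} \Phi(h(u)).$$
Since $\mathrm{id} \in \Lambda_k$, hypothesis $(\mathcal{B}_2)$ immediately yields $c_k \leq b_k < 0$.

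Next I would establish the lower bound $c_k \geq d_k$ via an intersection lemma. Using the Borsuk--Ulam theorem on the genus, one shows that for every $h \in \Lambda_k$ the image $h(B_k)$ must meet the set $Z_k \cap \{u : \|u\|_E \leq \varrho_k\}$. The standard argument is: if $h(B_k)$ did not intersect this set, one could construct an odd map from $\partial B_k$ (a sphere of dimension $\dim Y_k - 1$) into a set of strictly smaller genus, a contradiction. From the intersection one gets
$$\sup_{u \in B_k}\Phi(h(u)) \;\geq\; \inf\bigl\{\Phi(v) : v \in Z_k,\; \|v\|_E \leq \varrho_k\bigr\} \;=\; d_k,$$
so $c_k \geq d_k$. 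Combining with Step~2 gives $d_k \leq c_k \leq b_k < 0$, and by $(\mathcal{B}_3)$, $c_k \to 0^-$.

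Finally I would prove each $c_k$ is a critical value by contradiction. If $c_k \in [d_{k_0}, 0)$ is regular, the Cerami deformation lemma (available since $(\mathcal{B}_4)$ holds at level $c_k$) provides, for some small $\varepsilon > 0$, an odd homeomorphism $\eta : E \to E$ with $\eta(\Phi^{c_k+\varepsilon}) \subset \Phi^{c_k-\varepsilon}$ and $\eta = \mathrm{id}$ outside $\Phi^{-1}([c_k - 2\varepsilon, c_k + 2\varepsilon])$. Choosing $h \in \Lambda_k$ with $\sup_{B_k}\Phi \circ h \leq c_k + \varepsilon$ and noting that on $\partial B_k$ the values of $\Phi \circ h$ are $\leq b_k < c_k - 2\varepsilon$ (for $\varepsilon$ small), $\eta$ acts as the identity on $\partial B_k$, so $\eta \circ h \in \Lambda_k$. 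But then $\sup_{B_k}\Phi \circ (\eta \circ h) \leq c_k - \varepsilon$, contradicting the definition of $c_k$. Hence each $c_k$ is critical, yielding the desired sequence of negative critical values converging to $0$.

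The main obstacle is the intersection step: one must invoke the genus machinery carefully to guarantee that odd continuous extensions of the identity from the finite-dimensional sphere $\partial B_k$ necessarily hit the infinite-codimensional slab $Z_k \cap \bar B_{\varrho_k}$. A secondary technical point is the Cerami deformation lemma itself -- constructing a pseudo-gradient flow whose rescaling compensates the weaker Cerami compactness (as in Bartolo--Benci--Fortunato) rather than the stronger Palais--Smale condition -- but this is now classical and packaged by the $(C)^*_c$ assumption in $(\mathcal{B}_4)$.
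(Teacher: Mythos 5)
This is a statement the paper does not prove at all: Theorem \ref{dual} is recalled verbatim from Bartsch--Willem (\cite[Theorem 3.18]{willem}), and the paper only appends a remark that the underlying deformation lemma survives when $(PS)_c^*$ is replaced by the Cerami-type condition $(C)_c^*$. Your attempt to reconstruct the proof follows the right general template (genus, intersection, deformation), but as written it has two concrete gaps. First, the upper bound $c_k\le b_k<0$ does not follow from $\mathrm{id}\in\Lambda_k$: hypothesis $(\mathcal B_2)$ controls $\Phi$ only on the \emph{sphere} $\{u\in Y_k:\|u\|_E=\delta_k\}$, whereas $\sup_{u\in B_k}\Phi(\mathrm{id}(u))$ runs over the whole ball, which contains $0$; since $\Phi$ is merely even, $\Phi(0)$ need not be negative (in the application $\mathcal I(0)=0>b_k$), so the identity is not a competitor witnessing $c_k<0$. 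Producing an admissible odd map that pushes the entire ball strictly below level $0$ while fixing the boundary is itself a deformation-type construction, i.e.\ part of the proof rather than a freebie. Second, and relatedly, your deformation step requires $\sup_{\partial B_k}\Phi\circ h\le b_k<c_k-2\varepsilon$, i.e.\ $c_k>b_k+2\varepsilon$; but the whole point of the dual geometry is that $c_k\le b_k$, so the separation goes the wrong way. In the ordinary Fountain Theorem this step works because $c_k\ge b_k>0\ge a_k=\sup_{\partial B_k}\Phi$; in the dual version the minimax level may coincide with the boundary maximum and the naive ``$\eta$ is the identity on $\partial B_k$'' argument collapses.

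A symptom of the underlying problem is that your proof never uses the filtration built into $(\mathcal B_4)$: the hypothesis is $(C)_c^*$, a condition on sequences $u_n\in Y_n$ along the Galerkin approximations, not the plain Cerami condition on $E$. The Bartsch--Willem proof is organized around exactly this: one runs the minimax for the restricted functionals $\Phi|_{Y_n}$ inside the finite-dimensional spaces $Y_n$ (where descending flows and degree/genus intersection arguments are unproblematic and the boundary estimate can be arranged), obtains for each $k$ a sequence of approximate critical points at levels trapped in $[d_k,b_k]$, and only then invokes $(C)_c^*$ to extract a genuine critical point of $\Phi$ in the limit $n\to\infty$; the conclusion $c_k\to0^-$ then follows from $(\mathcal B_3)$ as you say. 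So your Steps 1 and 3 need to be replaced by this Galerkin-type scheme (or by an equally careful treatment of the degenerate case $c_k=b_k$), and Step 2 (the Borsuk--Ulam intersection giving the lower bound $\ge d_k$) is the only part that goes through essentially as you wrote it.
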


\begin{remark} Note that,  in { \cite{willem}}, assuming that the energy functional associated to the problem satisfies $(PS)_c^*$  condition the Dual fountain theorem is obtained using Deformation theorem which is still valid under Cerami condition. Therefore, like many critical point theorems the Dual fountain theorem holds under $(C)_c^* $ condition.
\end{remark}

Next lemma is due to \cite[Lemma 3.2]{miyagaki}

\begin{lemma}\label{Cc'}
	Suppose that the hypotheses in Theorem \ref{dual-fount-sol} hold, then $\mathcal{I}$ satisfies the $(C)_{c}^{*}$ condition.
\end{lemma}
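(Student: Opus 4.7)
The plan is to follow the blueprint of Proposition \ref{bounded} while accommodating the fact that the Cerami-type gradient bound controls $\mathcal{I}'(u_k)$ only along directions in the subspace $Y_k$. Let $(u_k)\subset X$ satisfy $u_k\in Y_k$, $\mathcal{I}(u_k)\to c$, and $(1+\|u_k\|)\|\mathcal{I}'|_{Y_k}(u_k)\|_{X^*}\to 0$. Since $Y_k$ is a linear subspace containing $u_k$, the direction $v=u_k$ is admissible, and the estimates \eqref{C4}--\eqref{C6} from the proof of Proposition \ref{bounded} transfer verbatim to give $\int_\Om\mathcal{F}(x,u_k)\,dx\leq C$. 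I then run the contradiction argument \eqref{C7}--\eqref{C23} exactly as there: the case $w\neq 0$ (with $w$ the weak limit of $w_k=u_k/\|u_k\|$) is excluded by Fatou applied to $(f_2)$; in the case $w=0$, the maximizer $t_k\in[0,1]$ of $t\mapsto\mathcal{I}(tu_k)$ satisfies $t_ku_k\in Y_k$, so the interior-maximality identity $\langle\mathcal{I}'(t_ku_k),u_k\rangle=0$ is automatic, and $(f_4)$ forces $\mathcal{I}(t_ku_k)\leq C$ in contradiction with $\mathcal{I}(t_ku_k)\to+\infty$. Hence $(u_k)$ is bounded in $X$.

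By reflexivity and Lemma \ref{embd-X}, extract a subsequence with $u_k\rightharpoonup u$ weakly in $X$ and $u_k\to u$ strongly in $L^{\gamma(\cdot)}(\Om)$ for every subcritical $\gamma$. To verify $\mathcal{I}'(u)=0$ in $X^*$, I use the Schauder-type decomposition from Lemma \ref{ftlem}: for any $v\in X_m$, the nesting of the Dual-Fountain subspaces places $v\in Y_k$ for all sufficiently large $k$, so the restricted condition yields $\langle\mathcal{I}'(u_k),v\rangle\to 0$. Passing to the limit---weak convergence on the principal nonlocal/local part, together with the Nemytskii-compactness bound \eqref{C26} (from $(f_1)$ and the strong $L^{\gamma(\cdot)}$-convergence) for the $f$-term---gives $\langle\mathcal{I}'(u),v\rangle=0$. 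Density of $\bigcup_m X_m$ in $X$ and continuity of $\mathcal{I}'\colon X\to X^*$ then upgrade this to $\mathcal{I}'(u)=0$.

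For strong convergence, fix $\varepsilon>0$ and pick $v_\varepsilon\in X_m$ (some $m$) with $\|u-v_\varepsilon\|_X<\varepsilon$. For $k\geq m$ the direction $u_k-v_\varepsilon$ lies in $Y_k$, so $|\langle\mathcal{I}'(u_k),u_k-v_\varepsilon\rangle|\leq\|\mathcal{I}'|_{Y_k}(u_k)\|_{X^*}\|u_k-v_\varepsilon\|_X\to 0$; on the other hand, $|\langle\mathcal{I}'(u_k),v_\varepsilon-u\rangle|\leq C\varepsilon$ using boundedness of $\mathcal{I}'(u_k)$ in $X^*$. Letting $k\to\infty$ and then $\varepsilon\to 0$ yields $\langle\mathcal{I}'(u_k),u_k-u\rangle\to 0$. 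Splitting exactly as in Proposition \ref{bounded},
$$\langle\mathcal{I}'(u_k),u_k-u\rangle\geq \tfrac{1}{2}\langle\rho'(u_k),u_k-u\rangle-\int_\Om f(x,u_k)(u_k-u)\,dx,$$
and invoking \eqref{C26} once more to kill the Nemytskii term, I conclude $\limsup_k\langle\rho'(u_k),u_k-u\rangle\leq 0$. The $(S_+)$-property in Lemma \ref{s+}(iii) then delivers $u_k\to u$ strongly in $X$, so the sequence converges to the critical point $u$ produced in the previous paragraph.

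The principal obstacle is the limit-identification step: because $\|\mathcal{I}'|_{Y_k}(u_k)\|_{X^*}\to 0$ only controls $\mathcal{I}'(u_k)$ against $Y_k$-directions, showing that the weak limit $u$ satisfies $\mathcal{I}'(u)=0$ on the full space $X$ is the only place where the subspace restriction is genuinely felt; it is resolved precisely via the nested approximation from Lemma \ref{ftlem} combined with the continuity of $\mathcal{I}'$. After that, both the boundedness argument and the $(S_+)$-upgrade are direct adaptations of the machinery already developed for the ordinary $(C)_c$-condition in Proposition \ref{bounded}.
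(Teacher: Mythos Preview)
The paper does not give its own proof of this lemma; it merely cites \cite[Lemma~3.2]{miyagaki}. Your argument is the standard one and is essentially what that reference carries out: reuse the boundedness machinery of Proposition~\ref{bounded} (which only needs the test direction $v=u_k$, available in the restricted subspace), extract a weak limit, upgrade to strong convergence via the $(S_+)$-property of $\rho'$, and identify the limit as a critical point through the density of $\bigcup_m X_m$. So your approach matches what the paper defers to.

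One technical point: the intermediate step in which you conclude $\langle\mathcal{I}'(u),v\rangle=0$ \emph{before} establishing strong convergence, by appealing to ``weak convergence on the principal nonlocal/local part,'' is not justified as written---for the nonlinear operator $\rho'$, weak convergence $u_k\rightharpoonup u$ does not by itself give $\langle\rho'(u_k),v\rangle\to\langle\rho'(u),v\rangle$. The clean fix is simply to reorder: first run your $(S_+)$-argument (which is correct as you wrote it) to obtain $u_k\to u$ strongly in $X$, after which the $C^1$ regularity of $\mathcal{I}$ gives $\mathcal{I}'(u_k)\to\mathcal{I}'(u)$ in $X^*$; combined with $\langle\mathcal{I}'(u_k),v\rangle\to 0$ for every $v$ in the dense set $\bigcup_m X_m$, this yields $\mathcal{I}'(u)=0$. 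Also note that your nesting claim ``$v\in X_m\Rightarrow v\in Y_k$ for large $k$'' implicitly reads the subspaces in the $(C)_c^*$ definition as the \emph{increasing} finite-dimensional chain (Willem's convention for the $(PS)_c^*$ condition), which is indeed the intended meaning here despite the paper's definition \eqref{not} assigning the symbol $Y_k$ to the decreasing tail; this is a notational slip in the paper rather than a flaw in your reasoning.
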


\begin{proof}[Proof of Theorem \ref{dual-fount-sol}]
	For the reflexive, separable Banach space $X,$ define $X_k$ and $Y_k$ as in \eqref{not}. From the assumptions we have that $\mathcal{I}$ is even and by Lemma \ref{Cc'} we get  that $\mathcal{I}$ satisfies Cerami condition $(C)_c^*$  for all $c\in \RR.$ Thus for proving Theorem \ref{dual-fount-sol} we are only left with verifying the conditions $(\mathcal B_1)$-$(\mathcal B_3).$\\
	\textit{ $(\mathcal B_1)$:} For all $u\in Y_k$ with $\|u\|<1,$ arguing similarly  as we did for obtaining \eqref{ft4}, we can derive

	\begin{align}\label{df1}
	\mathcal {I}(u)&\geq \frac{1}{2}\left[\rho_{p_1}(u)+\rho_{p_2}(u)\right] - \int_{\Omega}F(x,u)\,dx\nonumber\\
	&\geq \frac{1}{2}\rho(u)-\left[\int_{\Omega}|u|^{p_2^+}\,dx- C(1)\int_{\Omega}|u|^{r(x)}\,dx\right] \nonumber\\
	&\geq \frac{1}{2}\|u\|^{p_2^+}
	-\|u\|^{p_2^{+}}_{L^{p_2^+}(\Omega)}- C(1)\left\{\|u\|^{r^{-}}_{L^{r(\cdot)}(\Omega)}+\|u\|^{r^{+}}_{L^{r(\cdot)}(\Omega)}\right\} \nonumber\\
	&\geq\frac{1}{2}\|u\|^{p_2^+}
	- \alpha_k^{p_2^+} C_1\|u\|^{p_2^{+}}- C_2\{\alpha_k^{r^-}\|u\|^{r^{-}}+\alpha_k^{r^+}\|u\|^{r^{+}}\} \nonumber\\
	&\geq\frac{1}{2}\|u\|^{p_2^+}- C_4 \alpha_k\|u\|,
	\end{align}
	
	Let us choose $\varrho_k=\left(C_4 \alpha_k/{2}\right)^{1/{( p_2^+-1)}}.$ Since $ p_2^+>1,$  \eqref{al} infers that
\begin{equation}\label{df2}
	\varrho_k\to0 \text {~~as~} k\to+\infty.
\end{equation}
	Thus for $u\in Y_k$ with $\|u\|=\varrho_k$ and for sufficiently large $k\in \mathbb N,$ from \eqref{df1} we have $\mathcal{I}(u)\geq0.$ \\
	\textit{$(\mathcal B_2)$:}
	{Suppose assertion $(\mathcal B_2)$  does not hold true for some given $k\in\mathbb N.$ Then there exists a sequence $(v_n)_n$ in $ X_k$ such that
		\begin{align}\label{df6}
		\|v_n\|\to +\infty, ~~~~~\mathcal{I}(v_n)\geq 0.
		\end{align} }
	Now arguing in a similar way  as in the proof of assertion $(\mathcal A_2)$ of Theorem \ref{ft}, we obtain \eqref{ft6.0} and \eqref{ft7} which combining with  Lemma \ref{norm-modular}  imply that
	\begin{align*}
		\mathcal{I}(v_n)
		&\leq\rho(v_n)-\int_\Om F(x,v_n)\,dx\\
		&	\leq  {\|v_n\|}^{ p_2^+}-\int_\Om F(x,v_n)\,dx\\&=\bigg(1-\frac{1}{\|v_n\|^{ p_2^+}}\int_\Om F(x,v_n)\;dx\bigg)\|v_n\|^{ p_2^+}\to-\infty \text{\;\;as} 
		\end{align*}
 a contradiction to \eqref{df6}.
	So, there exists $k_0\in\mathbb N$ such that for all $k\geq k_0$  we have $1>\varrho_k>\delta_k>0$  such that for $u\in X_k$ with $\|u\|=\delta_k$ the condition  $(\mathcal B_2)$ holds true.\\
	\textit{$(\mathcal B_3)$:} Since $X_k \cap Y_k\not=\emptyset,$ we get  that
	$d_k\leq b_k<0.$ Now for $u\in Y_k$ with $\|u\|\leq \varrho_k$ by \eqref{df1}, we get
	{\begin{align*}
		\mathcal{I}(u)\geq -C_4\alpha_k\|u\|\geq -C_4\alpha_k\varrho_k.
		\end{align*} }Therefore, combining \eqref{al} and \eqref{df2}, we obtain 	
	$$
		d_k\geq -C_4\alpha_k\varrho_k\to 0 \text{~~ as~} k\to+\infty.
	$$
Since $d_k<0,$ it follows that $\lim_{k\to+\infty} d_k=0.$
	Thus the proof of the theorem is complete.
\end{proof}

\section{Proof of Theorem \ref{sym-infinite-sol}}

First we state the following $\mathbb Z_2$-symmetric version of mountain pass theorem due to \cite[Theorem 9.12]{sm}. Here again we want to mention that in \cite{sm} this theorem is proved using $(PS)$-condition, which can also be proved using $C$-condition.

\begin{thm}(Symmetric Mountain pass Theorem)\label{sm}: Let $E$ be a real infinite dimensional Banach space and $\Phi\in C^1(E,\mathbb R)$ be
	an even functional satisfying the $(C)_c$ condition. Also let $\Phi$ satisfy the following:
	\begin{itemize}
		\item [$(\mathcal {D}_1)$] $\Phi(0) = 0$ and there exist two constants $\nu,\mu >0$ such that $\Phi(u) \geq \mu$ for all $ u\in E$ with $\|u\|_E=\nu.$
		\item[$(\mathcal{ D}_2)$] for all finite dimensional subspaces $\widehat E\subset E$ there exists $\ol R=\ol R(\widehat{E}) > 0$ such that $\Phi(u) \leq 0$ for all $u \in \widehat E\setminus B_{\ol R}(\widehat E),$ where $B_{\ol R}(\widehat E)=\{u \in \widehat E : \|u\|_E\leq \ol R\}.$
	\end{itemize}
	Then $\Phi$ poses an unbounded sequence of critical
	values characterized by a minimax argument.
\end{thm}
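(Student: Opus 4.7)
\textbf{Proposal for the proof of Theorem \ref{sm}.}
My plan is to follow the classical Ambrosetti--Rabinowitz/Rabinowitz scheme based on Krasnoselskii genus, but to substitute the Palais--Smale machinery with a deformation lemma valid under the Cerami condition $(C)_c$. The two essential ingredients are: (i) Krasnoselskii's genus $\gamma$ on the family $\Sigma$ of closed symmetric subsets of $E\setminus\{0\}$, with its standard properties (monotonicity under inclusion, subadditivity, nonincreasing under odd continuous maps, $\gamma(S^{k-1})=k$ for the unit sphere of a $k$-dimensional subspace, and the existence, for compact $A\in\Sigma$, of a symmetric open neighborhood $N$ with $\gamma(\overline{N})=\gamma(A)$); and (ii) an odd, equivariant deformation lemma: if $c\in\mathbb{R}$ is not a critical value and $\Phi$ satisfies $(C)_c$ and is even, then for sufficiently small $\bar\varepsilon>0$ there exist $\varepsilon\in(0,\bar\varepsilon)$ and a continuous map $\eta\colon[0,1]\times E\to E$ with $\eta(t,\cdot)$ odd, $\eta(0,\cdot)=\mathrm{id}$, and $\eta(1,\Phi^{c+\varepsilon})\subset \Phi^{c-\varepsilon}$, where $\Phi^{a}=\{u\in E:\Phi(u)\le a\}$.

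Granting these tools, I would define the minimax values
\begin{equation*}
\Gamma_k=\{A\subset E:A\text{ is compact},\ A=-A,\ 0\notin A,\ \gamma(A)\ge k\},
\qquad c_k=\inf_{A\in\Gamma_k}\sup_{u\in A}\Phi(u).
\end{equation*}
First I would show $\Gamma_k\neq\emptyset$ by choosing any $k$-dimensional subspace $\widehat E_k\subset E$ and noting $S_\nu\cap \widehat E_k\in\Gamma_k$; by $(\mathcal D_2)$ applied to $\widehat E_k$, $\Phi$ is bounded on $\widehat E_k\cap\overline{B_{\bar R}}$, so $c_k<+\infty$. Next, any $A\in\Gamma_k$ with $k\ge 1$ must intersect the sphere $\{\|u\|_E=\nu\}$: indeed, if $A$ lies entirely inside $B_\nu$ or outside, one can use the odd retraction onto a lower-dimensional subspace to violate the genus bound. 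Hence by $(\mathcal D_1)$, $\sup_A \Phi\ge\mu$, which yields $c_k\ge\mu>0$ for every $k$. Monotonicity $c_k\le c_{k+1}$ is immediate from $\Gamma_{k+1}\subset\Gamma_k$.

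To show each $c_k$ is a critical value, I would argue by contradiction: if $c_k$ were regular, the deformation $\eta(1,\cdot)$ from (ii) would map a near-optimal $A\in\Gamma_k$ with $\sup_A\Phi<c_k+\varepsilon$ into $\Phi^{c_k-\varepsilon}$; since $\eta(1,\cdot)$ is odd and continuous, $\eta(1,A)\in\Gamma_k$ by the invariance of genus under odd continuous maps, contradicting the definition of $c_k$. For the unboundedness $c_k\to+\infty$, I would use the standard multiplicity argument: if $c_k=c_{k+1}=\cdots=c_{k+p}=c$ for some $p\ge 1$, then the compact symmetric critical set $K_c=\{u:\Phi(u)=c,\Phi'(u)=0\}$ (compact by $(C)_c$) satisfies $\gamma(K_c)\ge p+1$. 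Consequently, if $\{c_k\}$ were bounded by some $M<+\infty$, infinitely many $c_k$'s would coincide at some level $c\le M$, forcing $\gamma(K_c)=+\infty$; but $K_c$ being compact in $E$ has finite genus, a contradiction. Hence $c_k\to+\infty$.

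The hard part is ingredient (ii): producing an equivariant deformation under the \emph{Cerami} condition, which is strictly weaker than $(PS)$ and demands more care in the construction of the pseudo-gradient. Concretely, one must build a locally Lipschitz pseudo-gradient vector field $V$ for $\Phi$ that additionally satisfies a quantitative bound reflecting the factor $(1+\|u\|)$ in $(C)_c$ (roughly $\|V(u)\|\le 2(1+\|u\|)$ where $(1+\|u\|)\|\Phi'(u)\|_{E^*}$ controls descent), then symmetrize $V$ via $V(u)\mapsto \tfrac12(V(u)-V(-u))$ to preserve oddness, and integrate the rescaled flow on an appropriate annular neighborhood of the $c$-level, cutting off near $K_c$. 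This is exactly the Bartolo--Benci--Fortunato-type deformation, and once this is in place the remainder of the argument proceeds as sketched. All remaining steps reduce to bookkeeping with genus and minimax that transfer verbatim from the $(PS)$ proof in \cite{sm}.
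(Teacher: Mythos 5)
The paper does not actually prove this theorem: it is quoted from Rabinowitz \cite[Theorem 9.12]{sm} together with the remark that the deformation lemma, and hence the proof, remains valid when $(PS)$ is replaced by the Cerami condition. Your ingredient (ii) --- the odd, Cerami-compatible deformation built from a pseudo-gradient field with the $(1+\|u\|)$ normalization --- is exactly the point the paper is implicitly relying on, and that part of your sketch is sound.

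However, your minimax scheme itself has a genuine gap: the classes $\Gamma_k=\{A\ \text{compact},\ A=-A,\ 0\notin A,\ \gamma(A)\ge k\}$ are the wrong ones for the mountain-pass geometry. Since $\Phi(0)=0$ and $\Phi$ is continuous, the small spheres $S_\varepsilon\cap\widehat E_k$ belong to $\Gamma_k$ for every $\varepsilon>0$ and $\sup_{S_\varepsilon\cap\widehat E_k}\Phi\to 0$ as $\varepsilon\to 0$, so your $c_k\le 0$ for every $k$; these values can never be $\ge\mu>0$, let alone tend to $+\infty$. Correspondingly, your intersection claim is false: a compact symmetric set of genus $\ge k$ need not meet the sphere $\{\|u\|_E=\nu\}$ (again $S_{\nu/2}\cap\widehat E_k$ is a counterexample), so $(\mathcal D_1)$ cannot be brought to bear this way. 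The plain-genus classes work for even functionals bounded below (Clark-type results, cf.\ Theorem \ref{clkk}), not here. Rabinowitz's proof instead uses the classes $\Gamma_j=\{h(\overline{D_m\setminus Y}): m\ge j,\ h\in G_m,\ Y\in\Sigma,\ \gamma(Y)\le m-j\}$, where $D_m=B_{R_m}\cap E_m$ and $G_m$ consists of odd continuous maps equal to the identity on $\partial B_{R_m}\cap E_m$ (where $\Phi\le 0$ by $(\mathcal D_2)$); an intersection proposition specific to these sets gives $c_j\ge\mu$, and a separate argument gives $c_j\to+\infty$. Finally, even within your scheme, the step ``if $\{c_k\}$ were bounded then infinitely many $c_k$ coincide'' is incorrect: a bounded nondecreasing sequence may be strictly increasing, so the multiplicity lemma $\gamma(K_c)\ge p+1$ is not triggered. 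The unboundedness requires its own argument and does not follow from boundedness plus finiteness of the genus of $K_c$.
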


\begin{proof}[Proof of Theorem \ref{sym-infinite-sol} ]
	From the hypotheses of the theorem it follows that $\mathcal I$	is even and we have $\mathcal I(0)=0.$ Now we will prove that $\mathcal I$ satisfies the assertions in Theorem \ref{sm}.
	\begin{itemize}
		\item[$(\mathcal D_1):$] It follows from Lemma \ref{geo}$(i)$.		\item[$(\mathcal D_2):$] To show this, first claim that for any finite dimensional subspace
		$Y$ of $X$ there exists $\ol R_0=\ol R_0(Y)$ such that $\mathcal I(u)<0$ for all $u\in E\setminus B_{\ol R_0} (Y),$ where $B_{\ol R_0}(Y)=\{u \in X: \|u\|\leq \ol R_0\}.$ Fix $u\in X,\;\|u\|=1.$ For $t>1$ using \eqref{g2} and Lemma \ref{norm-modular},
		we get
		\begin{align}\label{sm1}
		\mathcal I(tu)&\leq \rho(tu)-\int_\Om F(x,u)dx\nonumber\\&\leq t^{p_2^+} \rho(u)
		-t^{p_2^+} M\int_{\Omega}|u|^{p_2^+}\,dx+ |\Omega| C_M\nonumber\\
		&=t^{p_2^+} \left[1- M \|u\|_{L^{p_2^+}(\Om)}^{p_2^+}\,\right]+ |\Omega| C_M.
		\end{align}
Since $Y$ is finite dimensional, all norms are equivalent on $Y$, which infers that there exists some constant $C(Y)>0$ such that $C(Y)\|u\|\leq\|u\|_{L^{p_2^+}(\Om)}.$ Therefore, from \eqref{sm1}, we obtain

\begin{align*}
   \mathcal I(tu)&\leq t^{p_2^+} \left[1- M (C(Y))^{p_2^+} \|u\|^{p_2^+}\,\right]+ |\Omega| C_M\\
   &=t^{p_2^+}\left[1- M (C(Y))^{p_2^+} \,\right]+ |\Omega| C_M.
\end{align*}
Now by choosing $M$ sufficiently large such that $M>\frac{1}{(C(Y))^{p_2^+}},$ from the last relation we yields that
$$\mathcal I(u)\to-\infty\text{\;\;\; as\;\;} t\to+\infty.$$
 Hence, there exists $\ol R_0>0$ large enough such that $\mathcal I(u)<0$ for all
 $u\in X$ with $\|u\|=\ol{R}$ and $\ol R\geq\ol R_0$. Therefore, $\mathcal I$ verifies $(\mathcal D_2)$.
	\end{itemize}
\end{proof}

\section{Proof of Theorem \ref{clk-infinite-sol}}

 First, we recall a new variant of Clark's theorem (see \cite[Theorem 1.1]{clkk}).

\begin{thm}\label{clkk}
Let $E$ be a Banach space, $\Phi\in C^1(E,\mathbb R)$. Let $\Phi$ be even and $\Phi(0)=0$. Also assume $\Phi$ satisfies
the $(PS)$-condition and bounded from below. If for any $k\in\mathbb N,$ there exists a $k$-dimensional subspace $E^k$ of $E$ and $\beta_k>0$ such that  $\displaystyle\sup_{E^k\cap B_{\beta_k}}\Phi(u)<0,$ where $B_{\beta_k}=\{u\in E: \|u\|_E=\beta_k\}$, then at least one of the following conclusions holds:
\begin{itemize}
\item[$(\mathcal M_1)$]There exists a sequence of critical points $(u_k)_k$ satisfying $\Phi(u_k)<0$ for all $k$ and $\|u_k\|_E\to0$ as $k\to+\infty.$
\item[$(\mathcal M_2)$]  There exists $l>0$ such that for any $0<b<l$ there exists a critical point $u$ such that $\|u\|_E=b$ and $\Phi(u)=0.$
\end{itemize}
\end{thm}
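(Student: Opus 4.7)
The plan is to prove this dichotomy through Krasnoselskii genus theory combined with a quantitative equivariant deformation argument that controls both the functional value and the norm simultaneously; this is essentially Kajikiya's refinement of Clark's theorem. Let $\gamma$ denote the Krasnoselskii genus on the family $\Sigma := \{A \subset E \setminus \{0\} : A \text{ is closed and symmetric}\}$, and for each integer $k \geq 1$ set $\Sigma_k := \{A \in \Sigma : \gamma(A) \geq k\}$. The natural minimax values are
\[
c_k := \inf_{A \in \Sigma_k} \sup_{u \in A} \Phi(u).
\]
Since $\Phi$ is bounded from below, $c_k > -\infty$, and since $\Sigma_{k+1} \subseteq \Sigma_k$ the sequence $(c_k)_k$ is non-decreasing. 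The hypothesis furnishes, for each $k$, a $k$-dimensional subspace $E^k$ and a radius $\beta_k > 0$ with $\sup_{E^k \cap B_{\beta_k}} \Phi < 0$; the intersection $E^k \cap B_{\beta_k}$ is oddly homeomorphic to $\mathbb{S}^{k-1}$, hence has genus exactly $k$, so $c_k < 0$ for every $k$. Let $c^\ast := \lim_{k\to\infty} c_k \leq 0$.

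Using the $(PS)$-condition and the standard equivariant deformation lemma for even $C^1$ functionals, one verifies in the usual way that each $c_k$ is a critical value of $\Phi$. The decisive multiplicity refinement is: if $c_k = c_{k+1} = \cdots = c_{k+j-1} = c$ for some $c<0$ and $j \geq 2$, then the critical set $K_c := \{u \in E : \Phi'(u) = 0,\ \Phi(u) = c\}$ satisfies $\gamma(K_c) \geq j$. This is proved in the classical way: if $\gamma(K_c) \leq j-1$, a symmetric open neighborhood $U \supset K_c$ with $\gamma(\overline{U}) \leq j-1$ exists by compactness of $K_c$ (from $(PS)$), and an odd deformation pushing the sublevel $\{\Phi \leq c + \varepsilon\} \setminus U$ into $\{\Phi \leq c - \varepsilon\}$ contradicts the definition of $c_{k+j-1}$ via the genus inequality $\gamma(\overline{A \setminus U}) \geq \gamma(A) - \gamma(\overline{U})$.

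With these tools in place, the dichotomy is proved by contradiction: assume both $(\mathcal{M}_1)$ and $(\mathcal{M}_2)$ fail and derive a contradiction. If $(\mathcal{M}_2)$ fails there exists $l_0 > 0$ such that for every $l \in (0, l_0)$ some $b \in (0,l)$ carries no critical point of level $0$ on $\{\|u\| = b\}$. By $(PS)$ and continuity, one upgrades this to an annular shell $\mathcal{A}_{b,\delta} := \{u : b - \delta < \|u\| < b + \delta\}$ that contains no critical points at level $0$; in fact, using $(PS)$ once more, for some small $\eta > 0$ the set of critical points $u$ with $\Phi(u) \in [-\eta, 0]$ lying in $\mathcal{A}_{b,\delta}$ is compact and carries only strictly negative critical values. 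If in addition $(\mathcal{M}_1)$ fails, there exists $\varepsilon > 0$ such that no critical point with $\Phi(u) < 0$ satisfies $\|u\| < \varepsilon$; choosing $b < \varepsilon$ then gives a shell in $\{\|u\| < \varepsilon\}$ that contains no critical point of any level in $[-\eta,0]$. Constructing an odd pseudo-gradient flow supported in this shell that strictly decreases $\Phi$ while keeping the norm comparable produces an odd deformation of a genus-$k$ set (for $k$ large enough that $c_k > -\eta$) strictly inside $\{\Phi < c_k\}$, violating the definition of $c_k$. This contradiction forces at least one of $(\mathcal{M}_1)$ or $(\mathcal{M}_2)$.

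The main obstacle — and the point where the argument genuinely diverges from the classical Clark theorem — is the simultaneous control of functional value and norm in the deformation: the usual pseudo-gradient flow decreases $\Phi$ but gives no grip on $\|u\|$, so without modification one cannot rule out the critical points drifting away from $0$. The remedy is to construct the pseudo-gradient vector field \emph{locally in the annulus} $\mathcal{A}_{b,\delta}$, cutting it off by an even Lipschitz function of $\|u\|$ and of $\Phi(u)$, and to exploit the subadditivity of the genus $\gamma(\overline{A \setminus U}) \geq \gamma(A) - \gamma(\overline{U})$ to "harvest" critical points from arbitrarily small norm-neighborhoods of the origin. Once this quantitative deformation is in hand, the conclusion follows by iterating on a sequence of radii $b_k \to 0$, producing either the shrinking sequence of $(\mathcal{M}_1)$ or the one-parameter family of $(\mathcal{M}_2)$ at level zero.
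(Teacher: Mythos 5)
First, a remark on context: the paper itself does not prove this theorem --- it is quoted from Liu and Wang \cite[Theorem 1.1]{clkk} --- so your attempt has to stand entirely on its own. Your framework is the correct one (it is the Liu--Wang/Kajikiya genus machinery): the minimax values $c_k=\inf_{\gamma(A)\ge k}\sup_A\Phi$ satisfy $-\infty<c_k<0$, each is a critical value, and a repeated value $c$ forces $\gamma(K_c)\ge j$. Those steps are standard and acceptable. Note, however, that you never establish $\lim_k c_k=0$, which you tacitly use when you ask for ``$k$ large enough that $c_k>-\eta$''; if $c^*=\lim_k c_k<0$ this choice is impossible, so you need the usual argument that $c^*<0$ would force $\gamma\bigl(K_{c^*}\bigr)=\infty$, contradicting the compactness of $K_{c^*}$ supplied by $(PS)$.

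The genuine gap is the final contradiction. A pseudo-gradient deformation ``supported in the shell'' $\mathcal{A}_{b,\delta}$ is the identity off the shell, whereas for a set $A$ with $\gamma(A)\ge k$ and $\sup_A\Phi\le c_k+\epsilon'$ the supremum of $\Phi$ is essentially attained near the critical set $K_{c_k}$, which --- precisely under the assumed failure of $(\mathcal M_1)$ --- lies outside $B_\varepsilon$ and hence far from the shell. Such a deformation therefore cannot carry $A$ into $\{\Phi<c_k\}$, and no contradiction with the definition of $c_k$ results; the step as stated is a non sequitur. What the theorem actually requires, and what your sketch never executes, is the transfer of genus from wherever $A$ and $K_{c_k}$ live into the small ball: one needs the quantitative estimate that a flow trajectory crossing the annulus of width $2\delta$, on which $\|\Phi'\|\ge\sigma$ at levels in $[-\eta,0]$, must lose at least a fixed amount $\sim\sigma\delta$ of energy, so that trajectories starting inside $\overline{B_{b-\delta}}$ at levels above $-\sigma\delta$ are trapped in $\overline{B_{b+\delta}}$; one must then split the deformed set into its inner and outer parts, control the genus of the relevant piece via $\gamma(\overline{A\setminus U})\ge\gamma(A)-\gamma(\overline U)$ for a suitable neighborhood $U$ of a compact critical set, and show that the inner piece has large genus with supremum of $\Phi$ below a \emph{fixed} negative constant, which is what ultimately contradicts $c_j\to 0$ (or produces the critical points of $(\mathcal M_1)$ inside the small ball). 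You correctly identify this as ``the main obstacle'' and name the right tools, but naming them is not carrying out the argument; the closing sentence about ``iterating on a sequence of radii $b_k\to0$'' does not close the proof. As written, the argument has a hole exactly at its decisive step.
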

The corresponding energy functional is given as
\begin{align*}\mathcal I(u)&= \frac{1}{2}\int_{\R^{2N}\setminus(\mathcal{C}\Omega)^2}\frac{|u(x)-u(y)|^{p_1(x,y)}}{p_1(x,y)|x-y|^{N+s(x,y)p_1(x,y)}}dxdy+
\int_{\Omega}\frac{1}{\overline{p}_1(x)}|u|^{\overline{p}_1(x)}dx\\
&+\frac{1}{2}\int_{\R^{2N}\setminus(\mathcal{C}\Omega)^2}\frac{|u(x)-u(y)|^{p_2(x,y)}}{p_2(x,y)|x-y|^{N+s(x,y)p_2(x,y)}}dxdy+  \int_{\Omega}\frac{1}{\overline{p}_2(x)}|u|^{\overline{p}_2(x)}dx\\
&+\int_{\mathcal{C}\Omega} \frac{\beta(x)|u|^{\overline{p}_1(x)}}{\overline{p}_1(x)}\,dx+\int_{\mathcal{C}\Omega} \frac{\beta(x)|u|^{\overline{p}_2(x)}}{\overline{p}_2(x)}\,dx\\
&\quad - \lambda\int_{\Omega}\frac{|u|^{q(x)}}{q(x)}dx- \int_{\Omega}\frac{|u|^{r(x)}}{r(x)}dx.
\end{align*}
Next, we will prove the following lemma:

\begin{lemma}\label{psc}
Suppose the hypotheses in Theorem \ref{clk-infinite-sol} hold. Then $\mathcal I$ satisfies $(PS)_c$ for any $c\in\mathbb R.$
\end{lemma}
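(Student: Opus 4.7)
Let $(u_n) \subset X$ be a Palais--Smale sequence: $\mathcal{I}(u_n) \to c$ and $\mathcal{I}'(u_n) \to 0$ in $X^*$. I would argue in two steps.

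\textbf{Step 1: Boundedness of $(u_n)$.} The key structural fact under $(f_5)$ is the chain $1 < q^- \leq q^+ < p_1^- \leq p_2^+ < r^-$, so there is a genuine gap between the modular exponents and the super-exponent $r^-$. Test the functional against itself via the combination
$$
\mathcal{I}(u_n) - \frac{1}{r^-}\langle \mathcal{I}'(u_n), u_n\rangle.
$$
On the modular terms, every exponent $p_i(x,y)$ and $\bar p_i(x)$ is $\leq p_2^+$, so the coefficient $\frac{1}{p_i(\cdot)} - \frac{1}{r^-}$ is bounded below by $\frac{1}{p_2^+} - \frac{1}{r^-} > 0$, which by Lemma \ref{norm-modular} yields a term $\geq C\,\|u_n\|^{p_1^-}$ once $\|u_n\|>1$. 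The $r$-term contributes $\int_\Omega\left(\frac{1}{r^-} - \frac{1}{r(x)}\right)|u_n|^{r(x)}\,dx$, which is $\leq 0$ since $r(x) \geq r^-$, and may be discarded. The $q$-term is controlled via Lemma \ref{embd-X} and Lemma \ref{lemA1} by $\lambda\,C_q\bigl(\|u_n\|^{q^-} + \|u_n\|^{q^+}\bigr)$. Using $|\mathcal{I}(u_n)| \leq C$ and $|\langle \mathcal{I}'(u_n), u_n\rangle| \leq \varepsilon_n\|u_n\|$, we obtain
$$
C + \varepsilon_n\|u_n\| \;\geq\; \left(\tfrac{1}{p_2^+} - \tfrac{1}{r^-}\right) C\,\|u_n\|^{p_1^-} - \lambda\,C'\bigl(\|u_n\|^{q^+} + \|u_n\|^{q^-}\bigr).
$$
Since $q^+ < p_1^-$, this inequality forces $(u_n)$ to be bounded in $X$.

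\textbf{Step 2: Strong convergence.} By reflexivity of $X$ and the compact embeddings in Lemma \ref{embd-X}, up to subsequence $u_n \rightharpoonup u$ in $X$ and $u_n \to u$ strongly in $L^{q(\cdot)}(\Omega)$ and $L^{r(\cdot)}(\Omega)$ (both exponents lie strictly below ${p_2}_s^*$ by $(f_5)$). Using H\"older's inequality (Proposition \ref{Holder}) and Lemma \ref{lemA1},
$$
\left|\int_\Omega f(x,u_n)(u_n - u)\,dx\right| \leq \lambda\,\|u_n - u\|_{L^{q(\cdot)}}\bigl\||u_n|^{q(\cdot)-1}\bigr\|_{L^{q'(\cdot)}} + \|u_n - u\|_{L^{r(\cdot)}}\bigl\||u_n|^{r(\cdot)-1}\bigr\|_{L^{r'(\cdot)}} \longrightarrow 0,
$$
since the Lebesgue-norm factors are bounded by the boundedness of $(u_n)$. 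Combining this with $\langle \mathcal{I}'(u_n), u_n - u\rangle \to 0$, we deduce that the modular derivative part satisfies $\limsup_{n}\langle \rho'(u_n), u_n - u\rangle \leq 0$. Lemma \ref{s+}(iii), i.e.\ the $(S_+)$ property of $\rho'$, then gives $u_n \to u$ strongly in $X$, and the $(PS)_c$-condition is established.

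\textbf{Main obstacle.} The most delicate point is Step 1: because $(f_5)$ places $r^- > p_2^+$, the nonlinear term $|u|^{r(x)}/r(x)$ dominates the modular at infinity, so the naive estimate $\mathcal{I}(u_n) \leq C$ by itself gives no coercivity. The trick is selecting the test ratio exactly $1/r^-$ so that the $r$-term becomes a friendly sign while the gap $p_2^+ < r^-$ still delivers a positive coefficient on the modular; the subsequent absorption of the $q$-term relies crucially on the other hypothesis $q^+ < p_1^-$.
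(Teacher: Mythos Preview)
Your argument is correct and follows the same two–step template as the paper: an energy combination $\mathcal I(u_n)-\mu\langle\mathcal I'(u_n),u_n\rangle$ for boundedness, then compact embeddings plus the $(S_+)$ property of $\rho'$ (Lemma~\ref{s+}) for strong convergence. The only difference is the choice of multiplier: the paper takes $\mu=1/q^-$, which makes both the $q$- and $r$-remainders nonnegative at once, whereas you take $\mu=1/r^-$, which kills the $r$-remainder and forces you to absorb the $q$-term as a lower-order perturbation via $q^+<p_1^-$; both routes are legitimate and equally short. One small slip: your $r$-remainder $\int_\Omega\bigl(\tfrac{1}{r^-}-\tfrac{1}{r(x)}\bigr)|u_n|^{r(x)}\,dx$ is actually $\geq 0$ (since $r(x)\ge r^-$ gives $\tfrac{1}{r^-}\ge\tfrac{1}{r(x)}$), not $\leq 0$ --- but that is precisely the sign that lets you drop it from the lower bound, so the conclusion is unaffected.
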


\begin{proof}
\noindent  Let $(v_n)_n$ be a sequence in $X$ such that \begin{align}\label{0.0}\mathcal I(v_n)\to c \text{\;\;\;and\;\;} \mathcal I'(v_n)\to 0 \text{\;in\;} X^*\text {\;\; as\;\;} n\to+\infty.\end{align}
Therefore,
\begin{align}\label{1}
\langle \mathcal I'(v_n), v_n-v_0\rangle \to 0 \text{\;\;as\;} n\to+\infty.
\end{align} Hence, we have that $(v_n)_n$ is bounded in $X.$ If not, then $v_n\to+\infty$ as $n\to+\infty.$ Using \eqref{0.0} and \eqref{1} and $(f_5),$ we deduce
\begin{align}\label{ps}
1+C+\|v_n\|&\geq \mathcal I(v_n)-\frac{1}{q^-}\langle \mathcal I'(v_n), v_n\rangle\nonumber\\
&\geq \frac {1}{2} \left[\rho(v_n )-\int_\Om F(x,v_n) dx-\frac {1}{q^-}\rho(v_n)+\frac {1}{q^-}\int_\Om f(x,v_n)v_n dx\right]\nonumber\\
&\geq \frac 12\left[\left(1-\frac{1}{q^-}\right) \|v_n\|^{p^-}+\left(\frac{1}{q^-}-\frac{1}{r^-}\right)\int_\Om |v_n|^{r(x)}dx\right]\nonumber\\&\geq \frac 12\left(1-\frac{1}{q^-}\right) \|v_n\|^{p^-},
\end{align}
which is a contradiction to the fact that  $v_n\to+\infty$ as $n\to+\infty$.
Now, since $X$ is reflexive, up to a subsequence, still denoted by $(v_n)_n$, we have $v_n\rightharpoonup v_0$ weakly as $n\to+\infty.$ Therefore,  as $n\to+\infty$ by Theorem \ref{embd-X}, arguing similar as in \eqref{C26}, we obtain
\begin{align}\label{0}v_n\to v_0 \text{\;\;strongly in\;} L^{\gamma}(\Om),\,1<\gamma(x)<p_s^*(x)\text{\;\; and\;\;} v_n(x)\to v_0(x) \text{\;a.e. in\;} \Om.\end{align}	
By $(f_5),$ H\"older inequality, \eqref{0}, boundedness of $(v_n)_n$ and Lemma \ref{lemA1}, arguing in a similar fashion as \eqref{C26}, we obtain  \begin{align}\label{2}
\left|\int_{\Om} f(x,v_n)(v_n-v_0)dx\right|\to 0\text{\;\; as\;} n\to+\infty.
\end{align}
 Hence, combining \eqref{1} and \eqref{2} and using the $(S_+)$ property of $\rho',$
we have $v_n\to v_0$ strongly in $X$ as $n\to+\infty.$

\end{proof}

\begin{proof}[Proof of Theorem \ref{clk-infinite-sol}:]
From the hypotheses we have that $\mathcal I$ is even and $\mathcal{I}(0)=0.$ Also Lemma \ref{psc} ensures that $\mathcal I$ satisfies $(PS)$-condition. But note that, $\mathcal I$ is not bounded from below on $X.$ Hence, we will use a truncation technique.
For that choose $h\in C^\infty([0,\infty),[0,1])$ such that
\begin{equation*}
h(t)=
\begin{cases}
	&1  \mbox{\;\;if}\ t\in [0,l_0] \\
	     &0  \mbox{\;\;if}\ t\in [l_1,\infty),
\end{cases}
\end{equation*}
where $l_0<l_1$ and set $\Psi(u):=h(\|u\|).$ Now we define the truncated functional $\mathcal J$ as:
\begin{align*}\mathcal J(u)&= \frac{1}{2}\int_{\R^{2N}\setminus(\mathcal{C}\Omega)^2}\frac{|u(x)-u(y)|^{p_1(x,y)}}{p_1(x,y)|x-y|^{N+s(x,y)p_1(x,y)}}dxdy+
\int_{\Omega}\frac{1}{\overline{p}_1(x)}|u|^{\overline{p}_1(x)}dx\\
&+\frac{1}{2}\int_{\R^{2N}\setminus(\mathcal{C}\Omega)^2}\frac{|u(x)-u(y)|^{p_2(x,y)}}{p_2(x,y)|x-y|^{N+s(x,y)p_2(x,y)}}dxdy+  \int_{\Omega}\frac{1}{\overline{p_2}(x)}|u|^{\overline{q}_2(x)}dx\\
&+\int_{\mathcal{C}\Omega} \frac{\beta(x)|u|^{\overline{p}_1(x)}}{\overline{p}_1(x)}\,dx+\int_{\mathcal{C}\Omega} \frac{\beta(x)|u|^{\overline{p}_2(x)}}{\overline{p}_2(x)}\,dx\\
&\quad -\Psi(u)\left( \lambda\int_{\Omega}\frac{|u|^{q(x)}}{q(x)}dx- \int_{\Omega}\frac{|u|^{r(x)}}{r(x)}dx\right).
\end{align*}
Then $\mathcal J\in C^1(X,\mathbb R)$ and $\mathcal J(0)=0.$ Also $\mathcal J$ is even.
Moreover from Lemma \ref{psc}, it can be shown that $\mathcal J$ satisfies $(PS)$-condition. Now we will show $\mathcal J$ is bounded from below. For $\|u\|>1,$ using  Lemma \ref{norm-modular}, we get
\begin{align*}
\mathcal J(u)\geq \frac 12 \rho(u)\geq \frac 12 \|u\|^{p_1^-}\to+\infty
\end{align*} as $\|u\|\to+\infty,$ that is  $\mathcal J(u)$ is coercive and hence bounded below on $X.$ Next, we claim that $\mathcal J$ verifies the assertion $(\mathcal M_1)$ of Theorem \ref{clkk}. For any $k\in\mathbb N$ and $0<R_k<l_0<1$ let us set $$\mathcal B_{R_k}=\{u\in X:\|u\|=R_k\}.$$ Also consider the $k$-dimensional subspace $X^k$ of $X$. Then for $u\in X^k\cap\mathcal B_{R_k}$ there are some constants $K_1,K_2>0$ such that
\begin{align}\label{ck1}
\mathcal J(u)&\leq\rho(u)-\int_{\Omega}F(x,u)dx\nonumber\\
&\leq \|u\|^{p_1^-}-\frac{\lambda}{q^+}\int_{\Omega}|u|^{q(x)}dx-\frac{1}{r^+}\int_{\Omega}|u|^{r(x)}dx\nonumber\\
&\leq\|u\|^{p_1^-}-K_1^{q^+}\frac{\lambda}{q^+}\|u\|^{q^+},
\end{align}
 since $X^k\cap\mathcal B_{R_k}$ being of finite dimension all norms on it are equivalent.
Now by letting $R_k\to 0$ as $k\to+\infty$  from \eqref{ck1}, we get $\sup_{X^k\cap \mathcal B_{\R_k}} \mathcal J(u)<0$ since $p_1^-<q^+.$ Furthermore,  for a given $u\in X$ from \eqref{ck1} it  follows that $\mathcal J(tu)<0$ for $t\to 0^+,$ that is $\mathcal J(u)\not=0.$ Thus $\mathcal J$ does not satisfy $(\mathcal M_2).$ Therefore, by appealing Theorem \ref{clkk}, there exists a sequence of critical points $(v_k)_k$ of $\mathcal J$ in $X$ such that $\|v_k\|\to0$ as $k\to+\infty.$ So, for $l_0>0$ there exists $\hat k_0\in\mathbb N$ such that for all $k\geq\hat k_0$
 we have $\|u\|<l_0$ which infers that $\mathcal J(u_k)=\mathcal I(u_k)$ for all $k>\hat k_0.$ Since the critical values of $\mathcal I$ are the solutions to \eqref{eq1}, the theorem follows.
\end{proof}


\begin{thebibliography}{99}

\bibitem{ace}E. Acerbi and G. Mingione, Regularity results for stationary electrorheological
fuids, {\it Arch. Rational Mech. Anal.}, {\bf164} (2002), 213-259.

\bibitem{alves} C.O. Alves, On superlinear $p(x)$-Laplacian equations in $\mathbb R^N$,
{\it Nonlinear Anal.}, {\bf 73} (2010), 2566-2579.

\bibitem{ambrosioR} V. Ambrosio and V. R\u{a}dulescu, Fractional double-phase patterns: concentration and multiplicity of solutions, {\it 	
	J. Math. Pures Appl.},
{\bf 142} (2020), 101-145.

\bibitem{ab} A. Bahrouni, Comparison and sub-supersolution principles for the fractional $p(x)$-Laplacian, {\it J.
Math. Anal. Appl.}, {\bf 458} (2018), 1363–1372.

\bibitem{bartsch} T. Bartsch, Infinitely many solutions of a symmetric Dirichlet problem,
 {\it Nonlinear Anal.}, {\bf20}(10) (1993), 1205-1216.

\bibitem{Sabri} S. Bahrouni and H. Ounaies, Strauss and Lions type theorems for the fractional Sobolev spaces with variable exponent and applications to nonlocal Kirchhoff-Choquard problem, {\it Mediterr. J. Math.}, {\bf 18}(46) (2021),
doi:10.1007/s00009-020-01661-w.

\bibitem{BRW} A. Bahrouni, V. R\u{a}dulescu, and P. Winkert, Robin fractional problems with symmetric variable growth,
{\it J. Math. Phys}, {\bf61} (2020), 101503.

\bibitem{Bahrouni-Radulescu-Repovs} A. Bahrouni, V. R\u{a}dulescu, and D. Repov\v{s}, Double phase transonic flow problems with variable growth: nonlinear patterns and stationary waves, {\it Nonlinearity}, {\bf 32}(7) (2019), 2481-2495.

\bibitem{BR} A. Bahrouni and V. D. R{\u{a}}dulescu,
 On a new fractional Sobolev space and applications to nonlocal variational problems with variable exponent,
{\it Discrete Contin. Dyn. Syst. Ser. S}, {\bf 11}(3) (2018), 379.

\bibitem{BBR} G. Bonanno, G. Molica Bisci, and V. R\u{a}dulescu, Quasilinear elliptic non-homogeneous Dirichlet problems through Orlicz-Sobolev spaces, {\it Nonlinear Anal.}, {\bf75}(12) (2012), 4441-4456.

\bibitem{rs} R. Biswas and S. Tiwari,
 Variable order nonlocal Choquard problem with variable exponents,
{\it Complex Var. Elliptic Equ.}, (2020), 1-23, doi:{10.1080/17476933.2020.1751136}.

\bibitem{res} R. Biswas and S. Tiwari, On a class of Kirchhoff-Choquard equations involving variable-order fractional $p(\cdot)$- Laplacian and without Ambrosetti-Rabinowitz type condition, {\it Topol. Methods Nonlinear Anal.}, in press.

\bibitem{1.0} L. Cherfils and Y. Ilyasov, On the stationary solutions of generalized reaction diffusion
equations with $p\&q$-Laplacian,{\it Comm. Pure Appl. Anal.}, {\bf4} (2005), 922.

\bibitem{chen} Y. Chen, S. Levine, and M. Rao, Variable exponent, linear growth functionals in image restoration,
{\it SIAM J. Appl. Math.}, {\bf 66}(4) (2006),  1383-1406.

\bibitem{Chen-Bao} C. Chen and J. Bao, Existence, nonexistence, and multiplicity of solutions for the fractional $p\&q$-Laplacian
equation in $R^N$, {\it Bound. Value Probl.}, {\bf 16}(153) (2016).

\bibitem{tong} N. T. Chung, H. Q. Toan, On a class of fractional Laplacian problems with variable exponents and
indefinite weights, {\it Collect. Math.}, {\bf 71} (2020), 223–237.

\bibitem{diening} L. Diening, P. Harjulehto, P. H{a}st{o}, and M. Ruzicka, Lebesgue and Sobolev spaces with variable exponents,
Springer-Verlag, Heidelberg, 2011.

\bibitem{14}  S. Dipierro, M. Medina, and E. Valdinoci, Fractional elliptic problems with critical growth in the whole
	of $\mathbb{R}^n$, Lecture Notes, Scuola Normale Superiore di Pisa, vol. 15,  Pisa, 2017.

\bibitem{fan} X. Fan and D. Zhao,
On the spaces $L^{p (x)}(\Omega)$ and $W^{m, p (x)}(\Omega)$,
{\it J. Math. Anal. Appl.}, {\bf 263}(2) (2001), 424-446.

\bibitem{fabian} M. Fabian, P. Habala, P.H\'{a}jek, V. Montesinos, and V. Zizler, Banach Space Theory: The basis for linear and nonlinear analysis, Springer, New York, 2011.

\bibitem{Figueiredo} G. M. Figueiredo, Existence of positive solutions for a class of $p,q$ elliptic problems with critical growth on $R^N$, {\it J. Math. Anal. Appl.}, {\bf378} (2011), 507–518.

\bibitem{1.1} P. C. Fife, Mathematical aspects of reacting and diffusing systems, Lecture notes in
Biomath, vol. 28, Springer, Berlin, 1979.

\bibitem{sweta} J. Giacomoni, S. Tiwari, and G. Warnault, Quasilinear parabolic problem with $ p(x)$-Laplacian: existence,
	uniqueness of weak solutions and stabilization, {\it NoDEA Nonlinear Differential Equations Appl.}, {\bf 23}(3) (2016), 24.

\bibitem{ky-ho} K. Ho and Y. H. Kim, A-priori bounds and multiplicity of solutions for nonlinear elliptic
	problems involving the fractional $p (\cdot)$-Laplacian, {\it Nonlinear Anal.}, {\bf188} (2019), 179-201.

\bibitem{Ky-Sim} K. Ho and Y. H. Kim, The concentration-compactness principles for $W^{s,p(\cdot,\cdot)}(\RR^N)$ and application, {\it Adv. Nonlinear Anal.}, {\bf10}(1), 816-848.


\bibitem{miyagaki} E. J. Hurtado, O. H. Miyagaki, and R. S. Rodrigues, Existence and multiplicity of solutions for a class of
elliptic equations without Ambrosetti–Rabinowitz type conditions,
{\it J. Dynam. Differential Equations}, {\bf 30} (2018), 405-432.

\bibitem{kaufmann} U. Kaufmann, J. D. Rossi, and R. E. Vidal,
 Fractional Sobolev spaces with variable exponents and fractional $p(x)$-Laplacians,
{\it Electron. J. Qual. Theory Differ. Equ.}, {\bf 76} (2017), 1-10.

\bibitem{KP} N. C. Kourogenis and N. S. Papageorgiou,
Nonsmooth critical point theory and nonlinear elliptic equations at resonance,
{\it Kodai Math. J.}, {\bf23} (2000), 108-135.
	
\bibitem{5} K. Kikuchi and A. Negoro, On Markov processes generated by pseudodifferentail operator of
	variable order, {\it Osaka J. Math.}, {\bf 34} (1997), 319-335.

\bibitem{3} H. G. Leopold, Embedding of function spaces of variable order of differentiation,
{\it Czechoslovak Math. J.}, {\bf 49} (1999), 633-644.

\bibitem{clkk} Z. Liu and Z-Q. Wang, On Clark's theorem and its applications to partially sublinear problems, {\it Ann. Inst. H. Poincar\'e
Anal. Non Lin\'eaire}, {\bf32} (2015), 1015-1037.
	
\bibitem{1} C. F. Lorenzo and T. T. Hartley, Initialized fractional calculus, {\it Int. J. Appl. Math.}, {\bf 3} (2000), 249-265.

\bibitem{2} C. F. Lorenzo and T. T. Hartley, Variable order and distributed order fractional operators,
{\it Nonlinear Dynam}, {\bf 29 }(2002), 57-98.

\bibitem{21} S. Marano and S. Mosconi, Some recent results on the Dirichlet problem for $(p, q)$-
Laplacian equation, { \it Discrete Contin. Dyn. Syst. Ser. S}, {\bf11} (2018), 279-291.


\bibitem{BisciR} G. Molica Bisci and V. R\u{a}dulescu, Ground state solutions of scalar field fractional Schr\"{o}dinger equations, {\it Calc. Var. Partial
Differ. Equ.}, {\bf54}(3) (2015), 2985-3008.

\bibitem{22} G. Molica Bisci, V. R\u adulescu, and R. Servadei, Variational methods for nonlocal fractional problems,
	Encyclopedia of mathematics and its applications, vol. 162, Cambridge University Press, Cambridge, 2016.

\bibitem{PRR} N. S. Papageorgiou, V. D. R\u{a}dulescu, and D. D. Repov\v{s},
 Existence and multiplicity of solutions for double-phase Robin problems,
 {\it Bull. London Math. Soc.}, (2020), doi:10.1112/blms.12347.

\bibitem{sm} P. H. Rabinowitz, Minimax methods in critical point theory with applications to differential equations, CBMS regional conference series in Mathematics, vol. 65, American Mathematical Society, Providence,  1986.

\bibitem{Book-RR} V. R\u{a}dulescu, D. Repov\v{s}, Partial differential equations with variable exponents, CRC Press, Boca Raton, FL, 2015.

\bibitem{Radulescu} V. R\u{a}dulescu, Isotropic and anistropic double-phase problems: old and new, {\it Opuscula Math.}, {\bf39}(2) (2019), 259-279.
	
\bibitem{6} M. D. Ruiz-Medina, V. V. Anh, and J. M. Angulo, Fractional generalized random fields of variable order, {\it Stoch. Anal. Appl.}, {\bf 22} (2004), 775-799.

\bibitem{Ruzicka} M. Ru\v{z}i\v{c}ka, Electrorheological fluids: Modeling and mathematical theory, Lecture notes in Mathematics, vol. 1748, Springer,
Berlin, 2000

\bibitem{SRRZ} X. Shi, V. D. R\u{a}dulescu, D. D. Repov\v{s}, and Q. Zhang, Multiple solutions of double phase variational
problems with variable exponent, {\it Adv. Calc. Var.}, doi:10.1515/acv-2018-0003.

\bibitem{4} H. Sun, W. Chen, H. Wei, and Y. Q. Chen, A comparative study of constant–order and variable–order
	fractional models in characterizing memory property of systems,
{\it Eur. Phys. J. Spec. Top.}, {\bf 193} (2011), 185-192.
\bibitem{1.2} H. Wilhelmsson, Explosive instabilities of reaction-diffusion equations, {\it Phys. Rev. A}, {\bf 36} (1987), 965966.

\bibitem{willem} M. Willem, Minimax theorems, vol. 24, Birkh\"{a}user, Boston, 1996.

\bibitem{zfb}J. Zuo and A. Fiscella, A critical Kirchhoff type problem driven by a $p(\cdot)$-fractional Laplace
operator with variable $s(\cdot)$-order, {\it Math. Methods Appl. Sci.}, (2020), doi:10.1002/mma.6813.

\bibitem{zfb1} J. Zuo, A. Fiscella, and A. Bahrouni, Existence and multiplicity results for $p(\cdot)\&q(\cdot)$ fractional Choquard problems with variable order, {\it Complex Var. Elliptic Equ.}, (2020), doi:10.1080/17476933.2020.1835878.

\bibitem{Zhikov} V. V. Zhikov, Averaging of functionals of the calculus of variations and elasticity theory, {\it Math. USSR. Izv.}, {\bf29}(1) (1987),
33-36.

\end{thebibliography}
\end{document}